\numberwithin{equation}{section}
\numberwithin{equation}{section}
\newcommand{\be}{\begin{equation}}
\newcommand{\ee}{\end{equation}}
\newcommand{\beaa}{\begin{eqnarray*}}
\newcommand{\eeaa}{\end{eqnarray*}}
\newcommand{\bea}{\begin{eqnarray}}
\newcommand{\eea}{\end{eqnarray}}
\newcommand{\lbl}{\label}
\newcommand{\bei}{\begin{itemize}}
\newcommand{\eei}{\end{itemize}}
\newcommand{\ml}{\mathcal}
\newtheorem{theorem}{ \noindent T{\footnotesize HEOREM}}
\newtheorem{lemma}{ \noindent L{\footnotesize EMMA}}[section]
\newtheorem{conjecture}{ \noindent C{\footnotesize ONJECTURE}}
\begin{document}

\title{Statistical Properties of  Eigenvalues of Laplace-Beltrami Operators}
\author{Tiefeng Jiang$^{1}$
and  Ke Wang$^2$\\
University of Minnesota \\and Hong Kong University of Science and Technology}

\date{}
\maketitle

\footnotetext[1]{School of Statistics, University of Minnesota, 224 Church
Street, S. E., MN55455, USA, jiang040@umn.edu. 
The research of Tiefeng Jiang was
supported in part by NSF Grant DMS-1209166 and DMS-1406279.}

\footnotetext[2]{Department of Mathematics, Hong Kong University of Science and Technology, Clear Water Bay, Kowloon, Hong Kong, kewang@ust.hk. Ke Wang was partially supported by Hong Kong RGC grant GRF 16301618, GRF 16308219 and ECS 26304920. 
}

\begin{abstract}
\noindent We study the eigenvalues of a Laplace-Beltrami operator defined on the set of the symmetric polynomials, where the eigenvalues are expressed in terms of partitions of integers. To study the behaviors of these eigenvalues, we assign partitions with the restricted uniform measure, the restricted Jack measure, the uniform measure or the Plancherel measure. We first obtain a new limit theorem on the restricted uniform measure. Then, by using it together with known results on other three measures, we prove that the global distribution of the eigenvalues is asymptotically a new distribution $\mu$, the Gamma distribution,  the Gumbel distribution and the Tracy-Widom distribution, respectively. The Tracy-Widom distribution is obtained for a special case only due to a technical constraint. An explicit representation of $\mu$ is obtained by a function of independent random variables.
Two open problems are also asked.
\\\\
\end{abstract}


\noindent \textbf{Keywords:\/} Laplace-Beltrami operator, eigenvalue, random partition, Plancherel measure,  uniform measure, restricted Jack measure, restricted uniform measure, Tracy-Widom distribution, Gumbel distribution, Gamma distribution.

\noindent\textbf{AMS 2010 Subject Classification: \/} 05E10, 11P82, 60B20, 60C05, 60B10.\\


\newpage

\section{Introduction}\lbl{Introduction}

Consider the Laplace-Beltrami operator
\bea\lbl{renxing_eqn}
\Delta_{\alpha}=\frac{\alpha}{2}\sum_{i=1}^my_i^2\frac{\partial^2}{\partial y_i^2} + \sum_{1\leq i\ne j\leq m}\frac{1}{y_i-y_j}\cdot y_i^2\frac{\partial}{\partial y_i}
\eea
defined on the set of symmetric and homogeneous polynomial $u(x_1, \cdots, x_m)$ of all degrees. There are two important quantities associated with the operator: its eigenfunctions and eigenvalues. The eigenfunctions are the $\alpha$-Jack polynomials and the eigenvalues are given by
\bea\lbl{Jingle}
\lambda_{\kappa}=n(m-1)+a(\kappa')\alpha-a(\kappa)
\eea
where $\kappa=(k_1, k_2, \cdots k_m)$ with $k_m>0$ is a partition of integer $n$, that is, $\sum_{i=1}^m k_i=n$ and $k_1\geq \cdots \geq k_m$, and $\kappa'$ is the transpose of $\kappa$ and
\bea\lbl{kernel_sea}
a(\kappa)=\sum_{i=1}^m (i-1) k_i=\sum_{i \ge 1} \binom{k_i'}{2};
\eea
see, for example, Theorem 3.1 from Stanley (1989) or p. 320 and p. 327 from Macdonald (1998).

The Jack polynomials are multivariate orthogonal polynomials (Macdonald, 1998). They consist of three special cases: the zonal polynomials with $\alpha=2$ which appear frequently in multivariate analysis of statistics (e.g., Muirhead, 1982); the Schur polynomials with $\alpha=1$ and the zonal spherical functions with $\alpha=\frac{1}{2}$ which have rich applications in the group representation theory, algebraic combinatorics, statistics and random matrix theory [e.g., Macdonald (1998), Fulton and Harris (1999), Forrester (2010)].

In this paper we consider the statistical behaviors of the eigenvalues $\lambda_{\kappa}$ given in (\ref{Jingle}). That is,  how does $\lambda_{\kappa}$ look like if $\kappa$ is picked randomly? For example, what are the sample mean and the sample variance of $\lambda_{\kappa}$'s, respectively? In fact, even though the expression of $\lambda_{\kappa}$ is explicit, it is non-trivial to answer the question. In particular, it is hard to use a software to analyze them because the size of $\{\kappa;\, \kappa\ \mbox{is a partition of}\ n \}$ is of order $\frac{1}{n}e^{C\sqrt{n}}$ for some constant $C$; see (\ref{Raman}).

The same question was asked for the eigenvalues of random matrices and the eigenvalues of Laplace operators defined on compact Riemannian manifolds. For instance, the typical behavior of the  eigenvalues of a large Wigner matrix is the Wigner semi-circle law (Wigner, 1958), and that of a Wishart matrix is the Marchenko-Pastur law (Marchenko and Pastur, 1967). The Weyl law is obtained for the eigenvalues of a Laplace-Beltrami operator acting on functions with the Dirichlet condition which  vanish at the boundary of a bounded domain in the Euclidean  space  (Weyl, 1911). For example, the Weyl asymptotic formula says that $\frac{\lambda_k}{k^{d/2}} \sim (4\pi)^{-d/2}\frac{vol(M)}{\Gamma(\frac{d}{2}+1)}$
as $k\to\infty$, where $d$ is the dimension of $M$ and $vol(M)$ is the volume of $M$. It is proved by analyzing the trace of a heat kernel; see, e.g., p. 13 from Borthwick (2012). Let $\Delta_S$ be  the spherical Laplacian operator on the unit sphere  in $\mathbb{R}^{n+1}.$ It is known that the eigenvalues of $-\Delta_S$ are $k(k+n-1)$ for $k=0,1,2,\cdots$ with multiplicity of $\binom{n+k}{n}- \binom{n+k-2}{n}$;  see, e.g., ch. 2 from Shubin (2001). Some other types of Laplace-Beltrami operators appear in the Riemannian symmetric spaces; see, e.g.,  M\'{e}liot (2014). Their eigenvalues are also expressed in terms of partitions of integers. Similar to  this paper, those eigenvalues can also be analyzed.

To study a typical property of $\lambda_{\kappa}$ in (\ref{Jingle}), how do we pick a partition randomly?  We will sample $\kappa$  by using  four popular probability measures:  the restricted uniform measure, the restricted Jack measure, the uniform measure and  the Plancherel measure. While studying $\lambda_{\kappa}$  for fixed operator $\Delta_{\alpha}$ with $m$ variables, the two restricted measures are adopted to investigate $\lambda_{\kappa}$ by letting $n$ become large. Look at the infinite version of the operator $\Delta_{\alpha}$:
\bea\lbl{renxing_eqn2}
\Delta_{\alpha,\infty}:=\frac{\alpha}{2}\sum_{i=1}^{\infty}y_i^2\frac{\partial^2}{\partial y_i^2} + \sum_{1\leq i\ne j<  \infty}\frac{1}{y_i-y_j}\cdot y_i^2\frac{\partial}{\partial y_i},
\eea
which acts on the set of symmetric and homogeneous polynomial $u(x_1, \cdots, x_m)$ of degree $m\geq 0$ being arbitrary; see, for example, page 327 from Macdonald (1998). Recall (\ref{Jingle}). At ``level" $n$, the set of eigenvalues of $\Delta_{\alpha,\infty}$ is $\{\lambda_{\kappa}; \kappa \in \ml{P}_n\}$. In this situation, the partition length $m$ depends on $n$, this is the reason that we employ  the uniform measure and  the Plancherel measure.

Under the four measures, we prove in this paper that the limiting distribution of random variable $\lambda_{\kappa}$ is a new distribution $\mu$, the Gamma distribution, the Gumbel distribution and the Tracy-Widom distribution, respectively. Due to a technical constraint, the Tracy-Widom distribution is obtained for the case $\alpha=1$ only. For other $\alpha>0$, see a less precise result in Theorem \ref{thm:LLLplan} and Conjecture 1. The distribution $\mu$ is characterized by a function of independent random variables. More specifically, $\mu$ is the push-forward of  $\frac{\alpha}{2}\cdot \frac{\xi_1^2+\cdots + \xi_m^2}{(\xi_1+\cdots + \xi_m)^2}$ where $\xi_i$'s are i.i.d. random variables with the density  $e^{-x}I(x\geq 0).$ In the following we will present these  results in this order. We will see, in addition to a tool on random partitions  developed in this paper (Theorem \ref{finite_theorem}),  a fruitful of work along this direction has been used: 
 the approximation result on random partitions under the uniform measure by Pittel (1997);  the largest part of a random partition asymptotically following the Tracy-Widom law by Baik {\it et al}. (1999), Borodin {\it et al}.
(2000), Okounkov (2000) and Johannson (2001); Kerov's central limit theorem (Ivanov and Olshanski, 2001); the Stein method on random partitions by Fulman (2004); the limit law of random partitions under restricted Jack measure by Matsumoto (2008).

A consequence of our theory provides an answer at (\ref{mean_variance}) for the size of the sample mean and sample variance of $\lambda_{\kappa}$ aforementioned.

We do not pursue applications of our results in this paper. They may be useful in Migdal's formula for the partition functions of the 2D Yang-Mills theory [e.g., Witten (1991) and  Woodward (2005)]. Further possibilities can be seen, e.g.,  in the papers by Okounkov (2003) and Borodin and Gorin (2012).

We study the eigenvalues of the Laplace-Beltrami operator in terms of four different measures. This  can also be continued by other probability measures on random partitions, for example,  the $q$-analog of the Plancherel measure [e.g., Kerov (1992) and F\'{e}ray and M\'{e}liot (2012)], the multiplicative measures [e.g., Vershik (1996)], the $\beta$-Plancherel measure (Baik and Rains, 2001), the Jack  measure and the Schur measure [e.g., Okounkov (2003)].

{\bf Organization of the paper:} We present our limit laws by using  the four measures in Sections \ref{sec:restricted-uniform}, \ref{sec:restricted-Jack}, \ref{sec:uniform} and \ref{sec:Plancherel}, respectively. Four figures corresponding to the four theorems are provided to show that curves based on data and the limiting curves match very well. In Section \ref{sec:New_Random}, we state a new result on random partitions.  In Section \ref{main:proofs}, we prove all of the results. In Section \ref{appendix:last} (Appendix), we compute the sample mean and sample variance of $\lambda_{\kappa}$ mentioned in \eqref{mean_variance}, calculate a non-trivial  integral used earlier and derive the density function in Theorem \ref{cancel_temple} for two cases.


{\bf Notation:} $f(n) \sim g(n)$ if $\lim_{n \to \infty} f(n)/g(n) =1$. We
assume that $n$ is large and asymptotic notation such as $o(\cdot), O(\cdot)$ will be used under the assumption that $n\to \infty$. Let $\{X_n;\, n\geq 1\}$ be random variables and $\{w_n;\, n\geq 1\}$ be non-zero constants. If $\{X_n/w_n;\, n\geq 1\}$ is bounded in probability, i.e., $\lim_{K\to\infty}\sup_{n\geq 1}P(|X_n/w_n|\geq K)=0$, we then write $X_n=O_p(w_n)$ as $n\to\infty.$ If $X_n/w_n$ converges to $0$ in probability, we write $X_n=o_p(w_n)$. We write ``cdf" for ``cumulative distribution function" and ``pdf" for ``probability density function". We use $\kappa \vdash n$ if $\kappa$ is a partition of $n$. The notation $[x]$ stands for the largest integer less than or equal to $x$.

{\bf Graphs:} The convergence in Theorems \ref{cancel_temple}, \ref{Gamma_surprise}, \ref{vase_flower} and \ref{difficult_easy} are illustrated in Figures \ref{fig:ru}-\ref{fig:plancherel}: we compare the empirical pdfs, also called histograms in statistics literature, with their limiting pdfs in the left columns. The right columns compare the empirical cdfs with their limiting cdfs. These graphs suggest that the empirical ones and their limits match very well.

\subsection{Limit under restricted uniform distribution}\label{sec:restricted-uniform}
Let $\ml{P}_n$ denote the set of all partitions of $n$. Now we consider a subset of $\ml{P}_n$. Let $\ml{P}_n(m)$ and $\ml{P}_n'(m)$ be the sets of partitions of $n$ with lengths at most $m$ and with lengths exactly equal to $m$, respectively. 
Note that $\ml{P}_n(n)=\ml{P}_n$. Our limiting laws of $\lambda_\kappa$ under the two measures are derived as follows. A simulation is shown in Figure \ref{fig:ru}.

\begin{theorem}\lbl{cancel_temple} Let $\kappa \vdash n$ and $\lambda_{\kappa}$ be as in (\ref{Jingle}) with $\alpha >0$. Let $m\geq 2$, $\{\xi_i;\, 1\leq i \leq m\}$ be i.i.d. random variables with density  $e^{-x}I(x\geq 0)$ and $\mu$ be the measure induced by $\frac{\alpha}{2}\cdot \frac{\xi_1^2+\cdots + \xi_m^2}{(\xi_1+\cdots + \xi_m)^2}$. Then,
under the uniform measure on $\ml{P}_n(m)$ or $\ml{P}_n'(m)$, $\frac{\lambda_{\kappa}}{n^2}\to \mu$ weakly  as $n\to\infty$.
\end{theorem}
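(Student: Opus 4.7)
The plan is three-fold: reduce $\lambda_\kappa/n^2$ to a symmetric function of the rescaled parts, prove a joint weak law for those parts, and conclude by continuous mapping.

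\emph{Algebraic reduction.} Since $(\kappa')'=\kappa$, formula (\ref{kernel_sea}) gives $a(\kappa')=\sum_{i\ge 1}\binom{k_i}{2}=\frac{1}{2}\bigl(\sum_{i=1}^m k_i^2-n\bigr)$, while the deterministic bound $0\le a(\kappa)=\sum_{i=1}^m (i-1)k_i\le (m-1)n$ is immediate. Substituting these into (\ref{Jingle}) and dividing by $n^2$ gives, uniformly for $\kappa\in \ml{P}_n(m)$,
\[
\frac{\lambda_\kappa}{n^2} \;=\; \frac{\alpha}{2}\sum_{i=1}^m \left(\frac{k_i}{n}\right)^{2} + r_n(\kappa), \qquad |r_n(\kappa)| \le \frac{C_{\alpha,m}}{n}.
\]
Since $n=\sum_i k_i$, the leading term equals $\frac{\alpha}{2}\cdot (\sum_i k_i^2)/(\sum_i k_i)^2$, already matching the form of $\mu$.

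\emph{Joint weak law for the rescaled parts.} The probabilistic core is to show that, under the uniform measure on $\ml{P}_n(m)$ (or $\ml{P}_n'(m)$), the vector $(k_1/n,\ldots,k_m/n)$ converges weakly to the uniform law on the decreasing simplex $\Delta_m = \{y_1 \ge \cdots \ge y_m \ge 0,\, \sum_i y_i = 1\}$. Each element of $\ml{P}_n(m)$ corresponds bijectively to a lattice point in $n\Delta_m$, so the claim reduces to an Ehrhart/Riemann-sum estimate: the number of such lattice points lying in any continuity subset $B\subset \Delta_m$ is asymptotic to $n^{m-1}$ times the appropriately normalized $(m-1)$-dimensional measure of $B$. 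Combined with the classical asymptotic $|\ml{P}_n(m)| \sim n^{m-1}/(m!(m-1)!)$, taking ratios yields the stated weak law. The transition from $\ml{P}_n(m)$ to $\ml{P}_n'(m)$ is free since the symmetric difference has cardinality $|\ml{P}_n(m-1)| = O(n^{m-2})$. This weak law is essentially Theorem \ref{finite_theorem} of the paper, on which I would rely.

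\emph{Conclusion and main obstacle.} The uniform law on $\Delta_m$ is the law of the decreasing order statistics of the symmetric Dirichlet$(1,\ldots,1)$ distribution, which in turn is the law of $(\xi_1,\ldots,\xi_m)/S$ with $S = \xi_1+\cdots+\xi_m$. Since $\phi(y_1,\ldots,y_m) = \frac{\alpha}{2}\sum_i y_i^2$ is continuous and symmetric---in particular invariant under sorting---the continuous mapping theorem applied to the weak law above, together with Slutsky's theorem to absorb the deterministic $O(1/n)$ remainder from the first step, delivers $\lambda_\kappa/n^2 \to \mu$ weakly. The main obstacle is the second step: boundary strata of $\Delta_m$ (where two or more coordinates coincide) carry only $O(n^{m-2})$ lattice points, but controlling these contributions uniformly over continuity sets---so that the limiting measure is cleanly uniform on $\Delta_m$---is the technical heart of the weak law, presumably the content of Theorem \ref{finite_theorem}.
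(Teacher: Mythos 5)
Your proposal is correct and follows essentially the same route as the paper: the algebraic reduction to $\frac{\alpha}{2}\sum(k_i/n)^2 + O(1/n)$ matches the paper's Lemma \ref{theater}, the joint weak law is exactly Theorem \ref{finite_theorem}, and the identification of the uniform law on the ordered simplex with normalized exponentials (which the paper obtains via Lemma \ref{Kotz} plus a symmetrization step) is the standard Dirichlet$(1,\ldots,1)$ fact you invoke, after which the symmetry of $\phi$ and continuous mapping finish. The only cosmetic difference is that you cite the Dirichlet characterization directly rather than deriving the constant density from the chi-square ratio lemma, and you observe that symmetry of $\phi$ lets you bypass the order statistics entirely; both are equivalent and the paper itself notes this equivalence in the Comments following the theorem.
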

By the definition of $\ml{P}_n'(m)$, the above theorem gives the typical behavior of the eigenvalues of the Laplace-Beltrami operator for fixed $m$. We will prove this theorem in Section \ref{sec:proof:restricted-uniform}. In  Section \ref{appendix:integral}, we compute  the pdf $f(t)$ of $\frac{\xi_1^2+\cdots + \xi_m^2}{(\xi_1+\cdots + \xi_m)^2}$, which is different from $\mu$ by a multiplicative scalar, for $m=2, 3$. It shows that $f(t)= \frac{1}{\sqrt{2t-1}}I_{[\frac{1}{2}, 1]}(t)$ for $m=2$; for $m=3$, the support of $\mu$ is  $[\frac{1}{3}, 1]$ and
\beaa
 f(t)=
 \begin{cases}
   \frac{2}{\sqrt{3}} \pi, & \text{if } \frac{1}{3} \le t < \frac{1}{2}; \\
   \frac{2}{\sqrt{3}} \big( \pi - 3\arccos\frac{1}{\sqrt{6t-2}} \big),      & \text{if } \frac{1}{2} \le t \le 1.\\
  \end{cases}
\eeaa
From our computation, it does not seem easy to derive an explicit formula for the density function as $m\geq 4$. It would be interesting to explore this.
The proof of Theorem  \ref{cancel_temple} relies on a new result on random partitions from $\ml{P}_n(m)$ and  $\ml{P}_n'(m)$ with the uniform distributions, which is of independent interest. We postpone it until Section \ref{sec:New_Random}.


Given numbers $x_1, \cdots, x_r$, the average and dispersion/fluctation of the data are usually measured by  the sample mean $\bar{x}$ and the sample variance $s^2$, respectively, where
\bea\lbl{pro_land}
\bar{x}= \frac{1}{r}\sum_{i=1}^rx_i\ \ \mbox{and}\ \ s^2=\frac{1}{r-1}\sum_{i=1}^r(x_i-\bar{x})^2.
\eea
Replacing $x_i$'s by $\lambda_{\kappa}$'s as in (\ref{Jingle}) for all $\kappa\in \mathcal{P}_n(m)'$, then $r=|\mathcal{P}_n(m)'|$. We will prove in Section 3.1 that, by Theorem 1 and the bounded convergence theorem, we have
\bea\lbl{mean_variance}
\frac{\bar{x}}{n^2}\to\frac{\alpha}{m+1}\ \ \mbox{and} \ \ \frac{s^2}{n^4}\to \frac{(m-1)\alpha^2}{(m+1)^2(m+2)(m+3)}
\eea
as $n\to\infty$.  The proof is given in Section \ref{appendix:mean_variance}. The moment $(1/r)\sum_{i=1}^rx_i^j$ with $x_i$'s replaced by $\lambda_{\kappa}$'s can be analyzed similarly for other $j\geq 3.$

\medskip

\noindent\textbf{Comments}.  By a standard characterization of spacings of i.i.d. random variables with the uniform distribution on $[0, 1]$ through exponential random variables [see, e.g.,  Sec 2.5.3 from  Rubinstein and Kroese (2007) and Chapter 5 from  Devroye (1986)], the limiting distribution $\mu$ in Theorem \ref{cancel_temple} is identical to any of the following:



\noindent (i) $\frac{\alpha}{2}\cdot \sum_{i=1}^m y_i^2$, where $y:=(y_1,\ldots,y_m)$ uniformly sits on   $\{y \in [0,1]^{m}; \sum_{i=1}^{m}y_i = 1 \}$.

\noindent (ii) $\frac{\alpha}{2}\cdot \sum_{i=1}^m (U_{(i)}-U_{(i-1)})^2$ where $U_{(1)} \le \ldots \le U_{(m-1)}$ are the order statistics of i.i.d. random variables $\{U_i;\, 1\leq i \leq m\}$ with uniform distribution on $[0,1]$ and $U_{(0)}=0$, $U_{(m)}=1$.



\begin{figure}[!ht]
 \begin{center}
   \includegraphics[width=10cm]{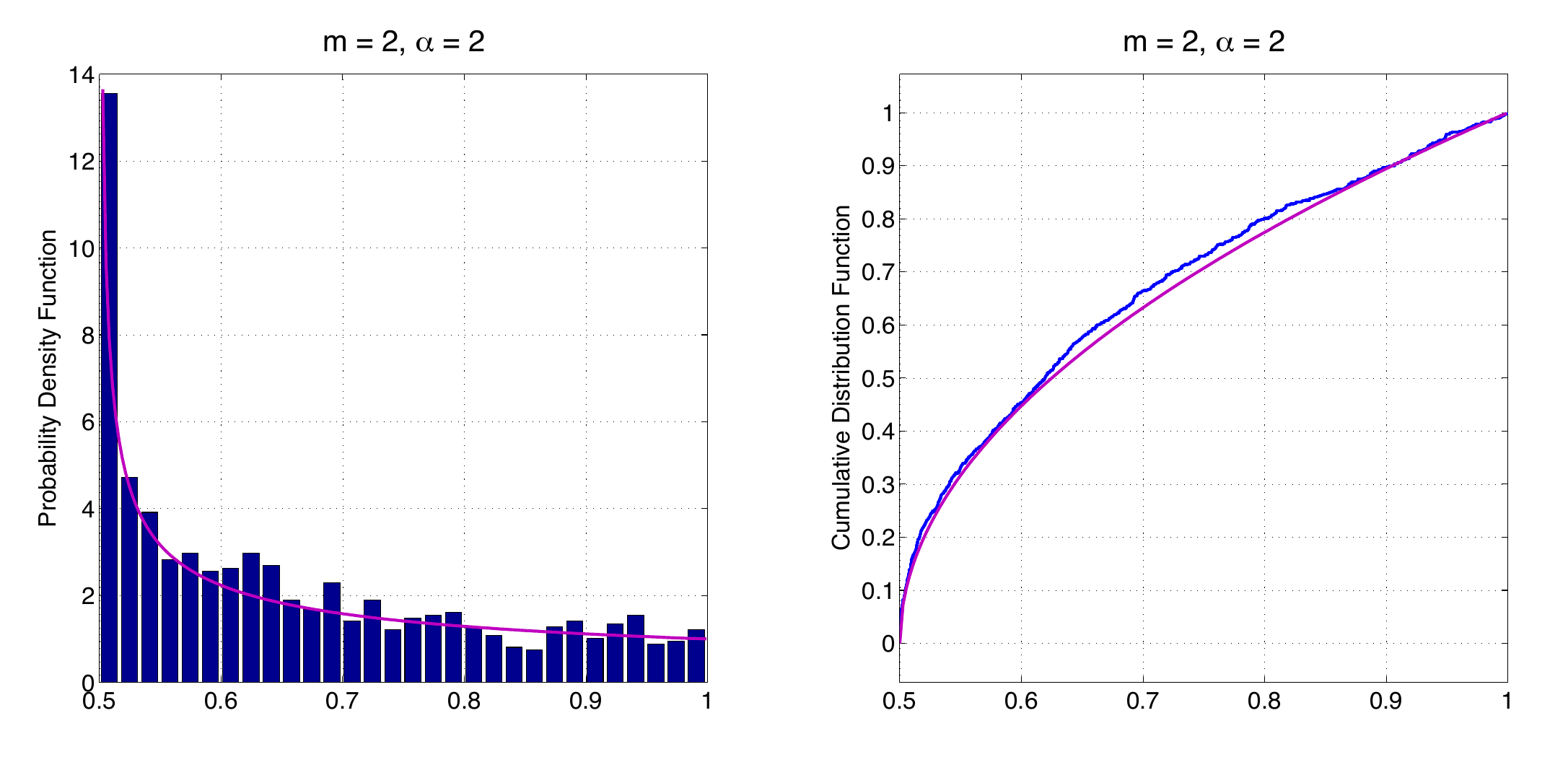}
   \caption{The histogram/empirical cdf   of $\lambda_\kappa/n^2$ for $\alpha=m=2$ is compared with pdf/cdf of $\mu$ in  Theorem \ref{cancel_temple} at $n=2000$. We independently sampled 1000 points according to $\mu$. }
   \label{fig:ru}
 \end{center}
\end{figure}

\subsection{Limit under restricted Jack distribution}\label{sec:restricted-Jack}

\noindent 
The Jack measure with parameter $\alpha$ chooses a partition
$\kappa \in \ml{P}_n$ with probability
\begin{equation}\label{eq:jack}
 P(\kappa) = \frac{\alpha^n n! }{c_{\kappa}(\alpha) c'_{\kappa} (\alpha)},
\end{equation}
where $$c_{\kappa}(\alpha) = \prod_{(i,j) \in \kappa} (\alpha (\kappa_i - j) + (\kappa'_j -i) + 1) \quad \text{and} \quad c'_{\kappa}(\alpha) = \prod_{(i,j) \in \kappa} (\alpha (\kappa_i - j) + (\kappa'_j -i) + \alpha).$$
The Jack measure naturally appears in the Atiyah-Bott formula from the algebraic geometry; see an elaboration in the notes by Okounkov (2013). 

In this section, we consider the random restricted Jack measure studied by Matsumoto (2008). Let $m$ be a fixed positive integer. Recall $\ml{P}_n(m)$ is the set of integer partitions of $n$ with at most $m$ parts.
The induced restricted Jack distribution with parameter $\alpha$ on $\ml{P}_n(m)$ is defined by [we follow the notation by Matsumoto (2008)]
\begin{equation}\label{eq:restrictedjack}
 P_{n,m}^{\alpha}(\kappa) = \frac{1}{C_{n,m}(\alpha)} \frac{1}{c_{\kappa}(\alpha) c'_{\kappa}(\alpha)}, \quad \kappa \in \ml{P}_n(m),
\end{equation}
with the normalizing constant
$$C_{n,m}(\alpha) = \sum_{\mu \in \ml{P}_n(m)} \frac{1}{c_{\mu}(\alpha) c'_{\mu}(\alpha)}.$$

\begin{figure}[!ht]
 \begin{center}
   \includegraphics[width=10cm]{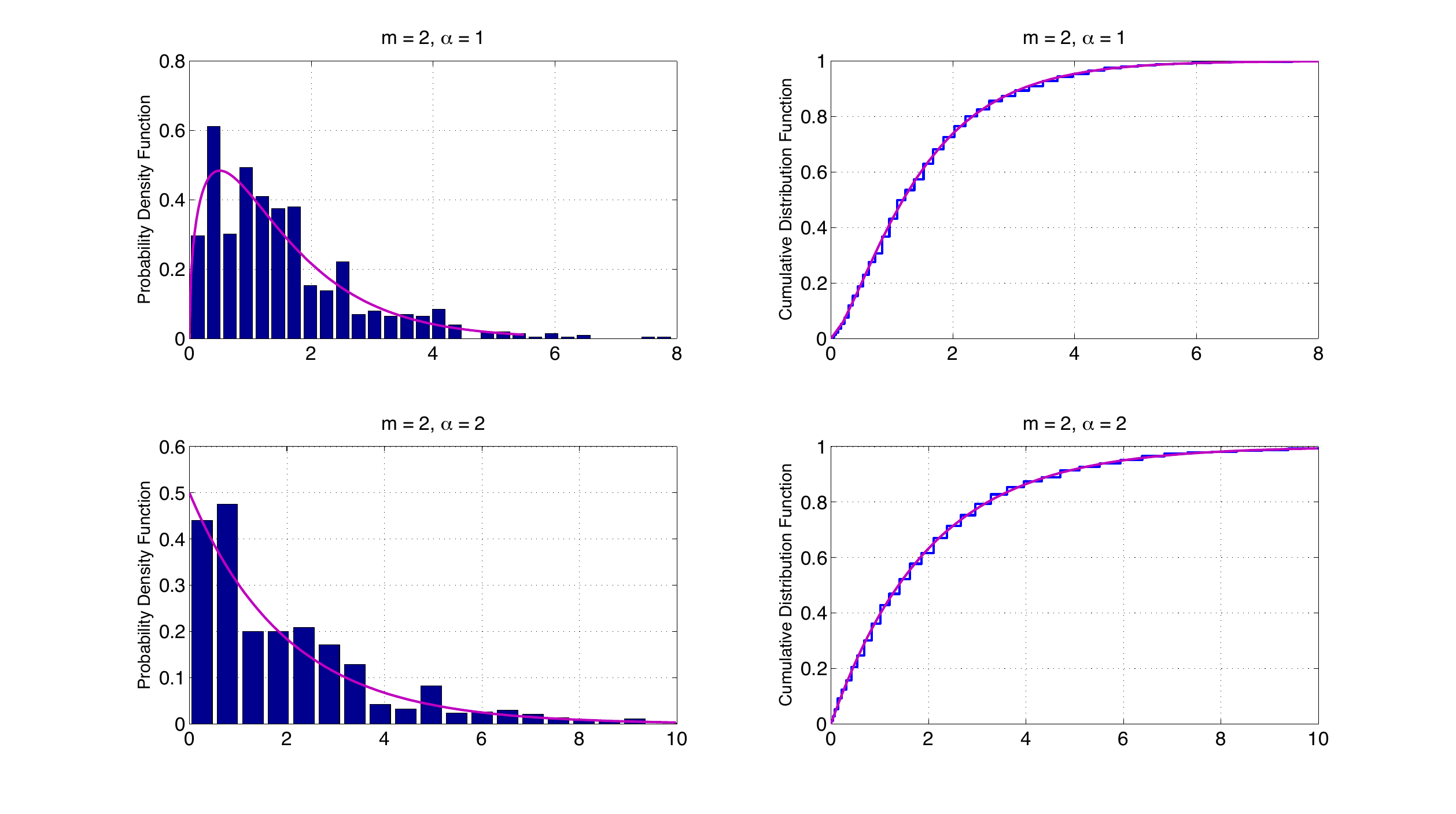}
   \caption{Top row compares histogram/empirical cdf of $(\lambda_{n}-a_n)/b_n$ in Theorem \ref{Gamma_surprise} for $m=2,\, \alpha=1$ with Gamma pdf/cdf at $n=1000$. The quantity ``$(\lambda_{n}-a_n)/b_n$" is independently sampled for $800$ times. Similar interpretation applies to the bottom row for $m=\alpha=2.$}
   \label{fig:rj}
 \end{center}
\end{figure}
Similarly, replacing  $\ml{P}_n(m)$  above with ``$\ml{P}_n'(m)$", we get the restricted Jack measure on $\ml{P}_n'(m)$. We call it  $Q_{n,m}^{\alpha}$. The following is our result under the two measures.
\begin{theorem}\lbl{Gamma_surprise}
Let $\kappa \vdash n$ and $\lambda_{\kappa}$ be as in (\ref{Jingle}) with parameter $\alpha >0$. Set $\beta=2/\alpha$. Then, for given $m\geq 2$, if $\kappa$ is chosen according to $P_{n,m}^{\alpha}$ or $Q_{n,m}^{\alpha}$, then
$$\frac{\lambda_\kappa-a_n}{b_n} \to \text{Gamma distribution with pdf } h(x)=\frac{1}{\Gamma(v)\, (2/\beta)^{v}}x^{v-1}e^{-\beta x/2} \text{ for } x\geq 0$$ weakly as $n\to \infty$, where
\beaa
a_n=\frac{m-\alpha-1}{2}n+\frac{\alpha}{2m}n^2,\quad  b_n=\frac{n}{2m},\quad  v=\frac{1}{4}(m-1)\cdot(m\beta + 2).
\eeaa
\end{theorem}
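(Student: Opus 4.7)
The plan is to rewrite $\lambda_\kappa$ in terms of the scaled centered parts $\tilde x_i=(k_i-\frac{n}{m})/\sqrt{n}$, invoke Matsumoto's (2008) limit theorem to transfer the asymptotics to a traceless $\beta$-Hermite ensemble on the hyperplane $\{\sum x_i=0\}$, and identify the resulting law as a Gamma distribution via a radial decomposition. Using $a(\kappa)=\sum_{i=1}^m(i-1)k_i$ and $a(\kappa')=\frac12\sum_i k_i^2-\frac n2$ (padding $k_i=0$ for $i$ exceeding the length of $\kappa$), the substitution $k_i=\frac nm+\sqrt n\,\tilde x_i$ automatically gives $\sum\tilde x_i=0$, and a direct expansion of (\ref{Jingle}) yields the exact identity
$$\lambda_\kappa-a_n\;=\;\frac{\alpha n}{2}\sum_{i=1}^m\tilde x_i^{\,2}\;-\;\sqrt n\sum_{i=1}^m(i-1)\tilde x_i,$$
so that
$$\frac{\lambda_\kappa-a_n}{b_n}\;=\;m\alpha\sum_{i=1}^m\tilde x_i^{\,2}\;-\;\frac{2m}{\sqrt n}\sum_{i=1}^m(i-1)\tilde x_i.$$

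Next I would apply Matsumoto's (2008) theorem, which asserts that under $P_{n,m}^{\alpha}$ the random vector $(\tilde x_1,\ldots,\tilde x_m)$ converges weakly to a vector $(X_1,\ldots,X_m)$ supported on $H=\{x\in\RR^m:\sum x_i=0\}$ whose density is proportional to
$$\prod_{1\le i<j\le m}|x_i-x_j|^{2/\alpha}\exp\Big(-m\sum_{i=1}^m x_i^2\Big).$$
In particular $(\tilde x_i)$ is tight, so the $O(1/\sqrt n)$ remainder above tends to zero in probability; the continuous mapping theorem together with Slutsky then gives $(\lambda_\kappa-a_n)/b_n\to m\alpha\sum_{i=1}^m X_i^2$ weakly.

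To identify this limit as the stated Gamma law, I would parametrize $H\cong\RR^{m-1}$ in polar form $x=r\omega$, with $r=(\sum X_i^2)^{1/2}$ and $\omega$ on the unit sphere of $H$. Since $\prod_{i<j}|x_i-x_j|^{2/\alpha}=r^{m(m-1)/\alpha}\prod_{i<j}|\omega_i-\omega_j|^{2/\alpha}$ and Lebesgue measure on $H$ decomposes as $r^{m-2}\,dr\,d\sigma(\omega)$, the marginal density of $r$ is proportional to $r^{m-2+m(m-1)/\alpha}e^{-m r^2}$. Substituting $u=r^2$ identifies $\sum X_i^2\sim\text{Gamma}(v,m)$ with $v=(m-1)(m\beta+2)/4$ and $\beta=2/\alpha$; multiplication by $m\alpha$ rescales the rate to $m/(m\alpha)=\beta/2$, producing precisely the density $h$.

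Finally, the case of $Q_{n,m}^{\alpha}$ follows immediately from $Q_{n,m}^{\alpha}(\,\cdot\,)=P_{n,m}^{\alpha}(\,\cdot\mid\ml{P}'_n(m))$ together with $P_{n,m}^{\alpha}(\ml{P}'_n(m))=P_{n,m}^{\alpha}(\tilde x_m>-\sqrt n/m)\to 1$, which in turn is a consequence of tightness of $\tilde x_m$. The main technical obstacle is matching Matsumoto's limit density against the required Gamma parameters — in particular pinning down the coefficient $m$ multiplying the Gaussian factor — after which the rest of the argument is a routine combination of Slutsky with the standard radial decomposition of the traceless $\beta$-Hermite ensemble.
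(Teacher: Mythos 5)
Your proposal is correct, and the overall architecture matches the paper's: rewrite $\lambda_\kappa$ via Lemma \ref{theater} (equivalently, via $a(\kappa')=\tfrac12\sum k_i^2-\tfrac n2$ and $a(\kappa)=\sum(i-1)k_i$), substitute the centered/scaled parts, invoke Matsumoto's restricted-Jack limit theorem, drop the $O(n^{-1/2})$ remainder by Slutsky, and identify the limit of the squared-norm of the traceless $\beta$-Hermite vector. Your scaling $\tilde x_i=(k_i-n/m)/\sqrt n$ differs from the paper's $Y_{n,i}=\sqrt{\alpha m/n}(k_i-n/m)$ only by the constant $\sqrt{\alpha m}$, which is why you correctly end up with the density $\propto e^{-m\sum x_i^2}\prod|x_i-x_j|^{2/\alpha}$ instead of the paper's $\propto e^{-\frac\beta2\sum x_i^2}\prod|x_i-x_j|^\beta$, and need the extra $m\alpha$ multiplier on $\sum\tilde x_i^2$; these are equivalent.

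The one genuine difference is the final identification of the Gamma law. The paper computes the moment generating function $E\,e^{t\sum X_i^2}$ by the substitution $y_i=(1-2t/\beta)^{1/2}x_i$, which produces $(1-2t/\beta)^{-v}$ and closes the argument by uniqueness of the MGF. You instead use a radial-angular decomposition on the hyperplane $\{\sum x_i=0\}$: since the density is a radial factor times a homogeneous angular factor of degree $m(m-1)/\alpha$, the marginal of $r=\|X\|$ is $\propto r^{m-2+m(m-1)/\alpha}e^{-mr^2}$, and $u=r^2$ then has the Gamma$(v,m)$ density with $v=\frac{m-1}{2}+\frac{m(m-1)}{2\alpha}=\frac{(m-1)(m\beta+2)}{4}$, from which the rate $\beta/2$ follows after the $m\alpha$ rescaling. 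Both computations exploit the same scaling structure of the $\beta$-Hermite density; the radial approach is arguably more transparent because it delivers the density of $\sum X_i^2$ directly rather than indirectly through the transform. Your treatment of $Q_{n,m}^\alpha$ as the conditional law $P_{n,m}^\alpha(\cdot\mid\mathcal P_n'(m))$ together with $P_{n,m}^\alpha(\mathcal P_n'(m))\to1$ (from tightness of $\tilde x_m$) is also a cleaner way to reduce to the $P_{n,m}^\alpha$ case than the paper's brief remark that Matsumoto's proof goes through unchanged.
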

By the definition of $\ml{P}_n'(m)$, the above theorem gives the typical behavior of the eigenvalues of the Laplace-Beltrami operator for fixed $m$ under the restricted Jack measure.

Write $v=\frac{1}{2}\cdot\frac{1}{2}(m-1)(m\beta+2)$. Then the limiting distribution becomes a $\chi^2$ distribution with (integer) degree of freedom $\frac{1}{2}(m-1)(m\beta+2)$ for $\beta=1,2$ or $4$.  See Figure \ref{fig:rj} for numerical simulation. 

We will prove Theorem \ref{Gamma_surprise} in Section \ref{sec:proof:restricted-Jack}. Indeed, since $\mathcal{P}_m(n)$ and $\mathcal{P}_m'(n)$ have asymptotically the same size, and neither the uniform measure nor the restricted Jack measure is concentrated on any set in $\mathcal P_m(n)$ or $\mathcal P_m'(n)$, for the proofs of Theorem \ref{cancel_temple} and \ref{Gamma_surprise}, it suffices to prove the results on $\mathcal{P}_m(n)$.


\subsection{Limit under uniform distribution}\label{sec:uniform}

Let $\ml{P}_n$ denote the set of all partitions of $n$ and $p(n)$ the number of such partitions. Recall the operator $\Delta_{\alpha, \infty}$ in (\ref{renxing_eqn2}) and the eigenvalues in (\ref{Jingle}). At ``level" $n$, the set of eigenvalues is $\{\lambda_{\kappa}; \kappa \in \ml{P}_n\}$. The parameter ``$m$" appearing in Theorems \ref{cancel_temple} and \ref{Gamma_surprise} is irrelevant here. Now we choose $\kappa$ according to the uniform distribution on $\ml{P}_n$.
The limiting distribution of $\lambda_{\kappa}$ is given below. Denote $\zeta(x)$  the Riemman's zeta function.
\begin{theorem}\lbl{vase_flower}
Let  $\kappa \vdash n$ and $\lambda_{\kappa}$ be as in (\ref{Jingle}) with parameter $\alpha >0$. If $\kappa$ is chosen uniformly from the set $\ml{P}_n$, then
$$cn^{-3/2}\lambda_{\kappa} -\log \frac{\sqrt{n}}{c} \to \text{Gumbel distribution with cdf } G(x)=\exp\big(-e^{-(x+K)}\big) $$
weakly as $n \to \infty$, where $c=\frac{\pi}{\sqrt{6}}$ and $K=\frac{6\zeta(3)}{\pi^2}(1-\alpha)$.
\end{theorem}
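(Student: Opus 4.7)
My plan is to split $\lambda_\kappa$ into one term carrying the Gumbel fluctuation and two bulk sums that concentrate on explicit constants. Using $a(\kappa)=\sum_{i\ge 1}\binom{k_i'}{2}$ from (\ref{kernel_sea}) and the same identity applied to $\kappa'$ (giving $a(\kappa')=\sum_{i\ge 1}\binom{k_i}{2}$), I rewrite (\ref{Jingle}) as
\[
  \lambda_{\kappa}\;=\;n(m-1)\;+\;\alpha\sum_{i\ge 1}\binom{k_i}{2}\;-\;\sum_{i\ge 1}\binom{k_i'}{2},
\]
where $m=k_1'$ is the length of $\kappa$. After dividing by $n^{3/2}/c$, the first summand becomes $c(m-1)/\sqrt{n}$, while the two remaining sums are of order $n^{3/2}$ and will contribute only additive constants in the limit.

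The Gumbel input comes from the length $m$. The uniform measure on $\mathcal{P}_n$ is invariant under $\kappa\mapsto\kappa'$, so $m$ has the same law as the largest part $k_1$, and the classical Erd\H{o}s--Lehner theorem gives
\[
  \frac{c\,m}{\sqrt{n}}\;-\;\log\frac{\sqrt{n}}{c}\;\to\;G^{*}
\]
weakly, where $G^{*}$ has cdf $\exp(-e^{-x})$. Since $c/\sqrt{n}\to 0$, the same limit holds with $m$ replaced by $m-1$.

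For the bulk sums I would invoke Pittel's (1997) approximation for uniform random partitions, together with Vershik's limit shape $\phi(u)=-c^{-1}\log(1-e^{-cu})$, to show
\[
  \frac{1}{n^{3/2}}\sum_{i\ge 1}\binom{k_i}{2}\;\longrightarrow\;\frac{1}{2}\int_{0}^{\infty}\phi(u)^{2}\,du\;=\;\frac{\zeta(3)}{c^{3}}
\]
in probability. The integral is evaluated by the substitutions $v=e^{-cu}$ and $w=1-v$, using $\int_{0}^{1}\log^{2}(w)/(1-w)\,dw=2\zeta(3)$. Multiplication by $c$ yields $cn^{-3/2}\sum_i\binom{k_i}{2}\to 6\zeta(3)/\pi^{2}$, and transpose-invariance gives the same constant as the limit of $cn^{-3/2}\sum_i\binom{k_i'}{2}$. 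Slutsky's theorem then assembles the three pieces into
\[
  \frac{c\,\lambda_{\kappa}}{n^{3/2}}-\log\frac{\sqrt{n}}{c}\;\to\;G^{*}+(\alpha-1)\cdot\frac{6\zeta(3)}{\pi^{2}}\;=\;G^{*}-K,
\]
whose cdf at $x$ equals $\exp(-e^{-(x+K)})=G(x)$, matching the statement.

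The main obstacle will be justifying the concentration of the bulk sums to precision $o_p(n^{3/2})$. Vershik's shape theorem describes only parts of size $\Theta(\sqrt n)$, while a handful of parts may be as large as $O(\sqrt n\log n)$, so pointwise pushforward of the limit shape is not immediate. I would handle this by truncating the head of the partition using the a priori bound $k_1=O_p(\sqrt n\log n)$, hence $k_1^{2}=o_p(n^{3/2})$, and then approximating the remaining bulk sum by a Riemann sum via Pittel's quantitative description (or, alternatively, Fristedt's independent-geometric representation of the multiplicities, for which concentration of the weighted sum $\sum_j j^{2}\mu_{j}$ follows from a second-moment estimate). Once in-probability concentration is secured, Slutsky's theorem completes the assembly with the Gumbel term and delivers the stated $G(x)$.
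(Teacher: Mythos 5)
Your proposal is correct and takes essentially the same route as the paper: the Gumbel fluctuation comes from the length $m$ via the Erd\H{o}s--Lehner theorem (the paper's (\ref{seabed})), the two bulk sums concentrate to explicit constants via Pittel's (1997) quantitative shape theorem, and Slutsky assembles the pieces. One genuine improvement you make: since the uniform measure on $\mathcal{P}_n$ is invariant under $\kappa\mapsto\kappa'$ and each bulk sum converges to a constant in probability, proving $n^{-3/2}\sum_i\binom{k_i}{2}\to\zeta(3)/c^3$ immediately gives the same limit for $n^{-3/2}\sum_i\binom{k_i'}{2}$ for free. The paper instead proves concentration of both $\sum_j k_j^2$ and $\sum_j jk_j$ in parallel in Lemma \ref{happy_uniform} and identifies the two limiting constants through the integral identity (\ref{little_brother}) quoted from Pittel; your transpose argument replaces that second pass and makes the equality of constants structural rather than computational. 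One cautionary note on your last paragraph: the real obstacle is not the few anomalously large parts but the cumulative contribution of the many \emph{small} parts $k_j$ with $j$ between roughly $\tfrac{1}{6}\sqrt{n}\log n$ and the length $m$, so bounding $k_1^2=o_p(n^{3/2})$ alone does not control $\sum_{j\geq J_n}k_j^2$; your fallback to Pittel's explicit rates (or Fristedt's independent geometric multiplicities) is the right fix, and is exactly what Steps~4--5 of the paper's Lemma \ref{happy_uniform} carry out.
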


In Figure \ref{fig:uniform}, we simulate the distribution of $\lambda_\kappa$ at $n=4000$ and compare with the Gumbel distribution $G(x)$ as in Theorem \ref{vase_flower}. Its proof will be given at Section \ref{proof_vase_flower}. Comparing Figure \ref{fig:ru} and Figure \ref{fig:uniform}, we see the limiting behaviours of $\lambda_\kappa$ differ significantly under uniform measures on $\mathcal{P}_n(m)$ with $m$ fixed and $\mathcal{P}_n(n)$ with $m=n\to \infty$ respectively. 

\begin{figure}[!ht]
 \begin{center}
   \includegraphics[width=10cm]{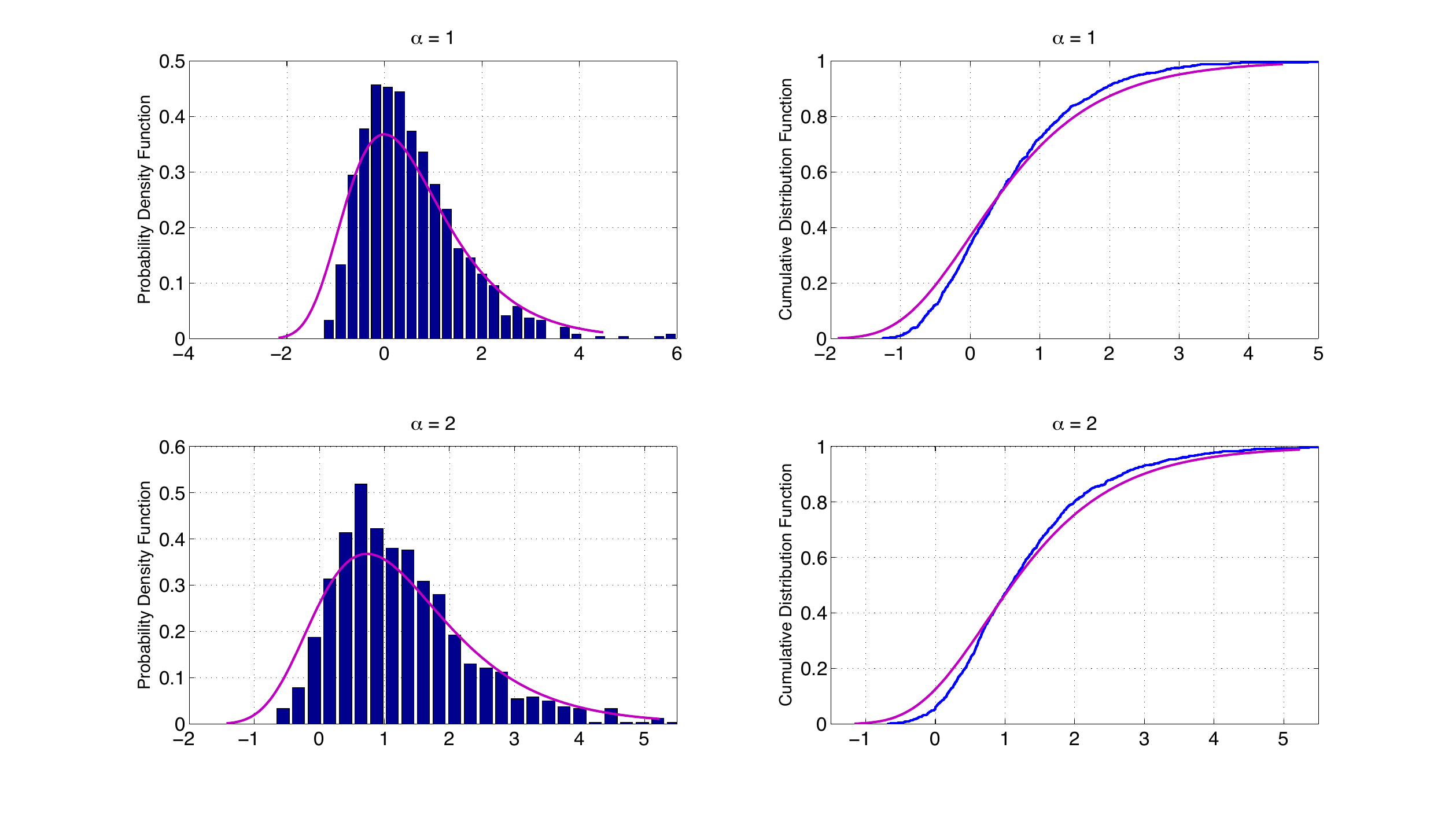}
   \caption{Top row compares histogram/empirical cdf of ``$cn^{-3/2}\lambda_{\kappa} -\log \frac{\sqrt{n}}{c}$" for $\alpha=1$ with the pdf $G'(x)$/cdf $G(x)$ in Theorem \ref{vase_flower} at $n=4000$. The quantity ``$cn^{-3/2}\lambda_{\kappa} -\log \frac{\sqrt{n}}{c}$" is independently sampled for $1000$ times. Similar interpretation applies to the bottom row for $\alpha=2.$}
   \label{fig:uniform}
 \end{center}
\end{figure}

\subsection{Limit under Plancherel distribution}\label{sec:Plancherel}

Review the operator $\Delta_{\alpha, \infty}$ in (\ref{renxing_eqn2}) and the eigenvalues in (\ref{Jingle}). At ``level" $n$, the set of eigenvalues is $\{\lambda_{\kappa}; \kappa \in \ml{P}_n\}$. There is no parameter ``$m$" appearing in Theorems \ref{cancel_temple} and \ref{Gamma_surprise}. We now apply the Plancherel measure to understand this set of eigenvalues.

A random partition $\kappa$ of $n$ has the Plancherel measure if it is chosen from $\ml{P}_n$ with probability
\begin{equation}\label{eq:plan}
P(\kappa) = \frac{\dim(\kappa)^2}{n!},
\end{equation}
where $\dim(\kappa)$ is the dimension of irreducible representations of the symmetric group $\ml{S}_n$ associated with $\kappa$. It is given by
$$\dim(\kappa) = \frac{n!}{\prod_{(i,j) \in \kappa} (k_i - j +k'_j -i + 1)}.$$
See, e.g., Frame {\it et al}. (1954). This measure is a special case of the $\alpha$-Jack measure defined in (\ref{eq:jack}) with $\alpha=1$. The Tracy-Widom distribution is defined by
\begin{eqnarray}\label{tai}
F_2(s)=\exp\left(-\int_{s}^{\infty}(x-s)q(x)^2\,dx\right),\
s\in
  \mathbb{R},
\end{eqnarray}
where $q(x)$ is the solution to the Painl\'eve II  differential equation
\begin{eqnarray*}
& & q''(x)=xq(x)+2q(x)^3\ \ \mbox{with boundary condition}\\
& & q(x) \sim \mbox{Ai}(x)\ \mbox{as}\ x\to +\infty
\end{eqnarray*}
and $\mbox{Ai}(x)$ denotes the Airy function. Replacing the uniform measure in Theorem \ref{vase_flower} with the Plancherel measure, we get the following result.

\begin{theorem}\lbl{difficult_easy} Let  $\kappa \vdash n$ and $\lambda_{\kappa}$ be as in (\ref{Jingle}) with parameter $\alpha=1$. If $\kappa$ follows the Plancherel  measure, then
\beaa
\frac{\lambda_{\kappa} - 2 \cdot n^{3/2}}{n^{7/6} } \to F_2
\eeaa
weakly as $n\to\infty$, where $F_2$ is as in (\ref{tai}).
\end{theorem}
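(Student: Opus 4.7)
The plan is to reduce Theorem~\ref{difficult_easy} to the Baik-Deift-Johansson theorem on the largest part of a Plancherel-random partition, and to show that the remaining contribution to $\lambda_\kappa$ is negligible on the scale $n^{7/6}$. Setting $\alpha=1$ in (\ref{Jingle}) and using the elementary double-counting identities $a(\kappa)=\sum_{(i,j)\in\kappa}(i-1)$ and $a(\kappa')=\sum_{(i,j)\in\kappa}(j-1)$, I would rewrite
\beaa
\lambda_\kappa = n(m-1) + a(\kappa')-a(\kappa) = n(k_1'-1) + \sum_{(i,j)\in\kappa}(j-i),
\eeaa
since $m=\ell(\kappa)=k_1'$. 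The second summand is the sum of contents of the Young diagram.

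Next I would exploit transpose symmetry of the Plancherel measure: since the measure is invariant under $\kappa\mapsto\kappa'$, the length $k_1'$ has the same distribution as the largest part $k_1$. The Baik \emph{et al.}~(1999) theorem cited in the introduction then gives $(k_1'-2\sqrt n)/n^{1/6}\to F_2$ weakly. Dividing the display above by $n^{7/6}$,
\beaa
\frac{\lambda_\kappa-2n^{3/2}}{n^{7/6}} \;=\; \frac{k_1'-2\sqrt n}{n^{1/6}} \;-\; n^{-1/6} \;+\; \frac{1}{n^{7/6}}\sum_{(i,j)\in\kappa}(j-i),
\eeaa
so the first summand converges to $F_2$, the second vanishes deterministically, and by Slutsky's theorem it remains to prove that the third summand goes to $0$ in probability.

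For the content sum I would invoke the Frobenius formula
\[
\sum_{(i,j)\in\kappa}(j-i) \;=\; \binom{n}{2}\cdot \frac{\chi^\kappa_{(2,1^{n-2})}}{\dim \kappa},
\]
together with the identity $\mathbb{E}_{\text{Plan}}[(\chi^\kappa_\rho/\dim\kappa)^2]=1/|C_\rho|$ for any non-identity cycle type $\rho$, which is a direct consequence of the Plancherel weight $(\dim\kappa)^2/n!$ and the orthogonality relations for irreducible characters of $\ml{S}_n$. Since $|C_{(2,1^{n-2})}|=\binom{n}{2}$, this yields $\mathbb{E}_{\text{Plan}}\bigl[\bigl(\sum_{(i,j)\in\kappa}(j-i)\bigr)^2\bigr]=\binom{n}{2}=O(n^2)$, so Chebyshev's inequality delivers the required bound $O_p(n)=o_p(n^{7/6})$.

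I expect the main obstacle to be less in any single step than in arguing that no other fluctuation of size $n^{7/6}$ can hide inside $\lambda_\kappa$: the BDJ input supplies the Tracy-Widom limit essentially for free via transpose symmetry, but one must confirm that the content statistic really is subdominant. The character-theoretic route above is the quickest; a self-contained alternative would be the Ivanov-Olshanski form of Kerov's central limit theorem cited in the introduction, which also produces a variance bound of polynomial order on the content sum, but at the cost of considerably more machinery.
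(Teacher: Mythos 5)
Your proposal is correct, and the decomposition of $\frac{\lambda_\kappa-2n^{3/2}}{n^{7/6}}$ into the $F_2$-convergent term $\frac{m-2\sqrt n}{n^{1/6}}$, the deterministic $-n^{-1/6}$, and the remainder $\frac{a(\kappa')-a(\kappa)}{n^{7/6}}$ is exactly the one the paper uses. Where you diverge is in how the remainder is shown to be negligible. The paper invokes the Kerov-type central limit theorem for the character ratio, citing Theorem~6.1 of Ivanov--Olshanski (2001) or Theorem~1.2 of Fulman (2004), to conclude that $\frac{a(\kappa')-a(\kappa)}{n}\to N(0,\tfrac12)$ weakly, hence $a(\kappa')-a(\kappa)=O_p(n)=o_p(n^{7/6})$. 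You instead go through the Plancherel-weighted column orthogonality of irreducible characters, $\sum_\kappa(\chi^\kappa_\rho)^2=n!/|C_\rho|$, to obtain the exact second moment $E[(a(\kappa')-a(\kappa))^2]=\binom{n}{2}$ and then apply Chebyshev. This is genuinely lighter machinery: you need only orthogonality, not the full central limit theorem, and in fact you also get $E[a(\kappa')-a(\kappa)]=0$ for free from the same relations. The paper's heavier input is not wasted, though, since the CLT reappears in the companion Theorem~\ref{thm:LLLplan} where the \emph{distributional} limit of the content sum actually matters; for the $\alpha=1$ case alone your second-moment bound is cleaner and self-contained. Your observation that transpose invariance of the Plancherel measure supplies the Tracy--Widom limit for $m=k_1'$ from the BDJ result for $k_1$ is the same justification implicit in the paper's citation of (\ref{green:white}).
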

\begin{figure}[!ht]
 \begin{center}
   \includegraphics[width=10cm]{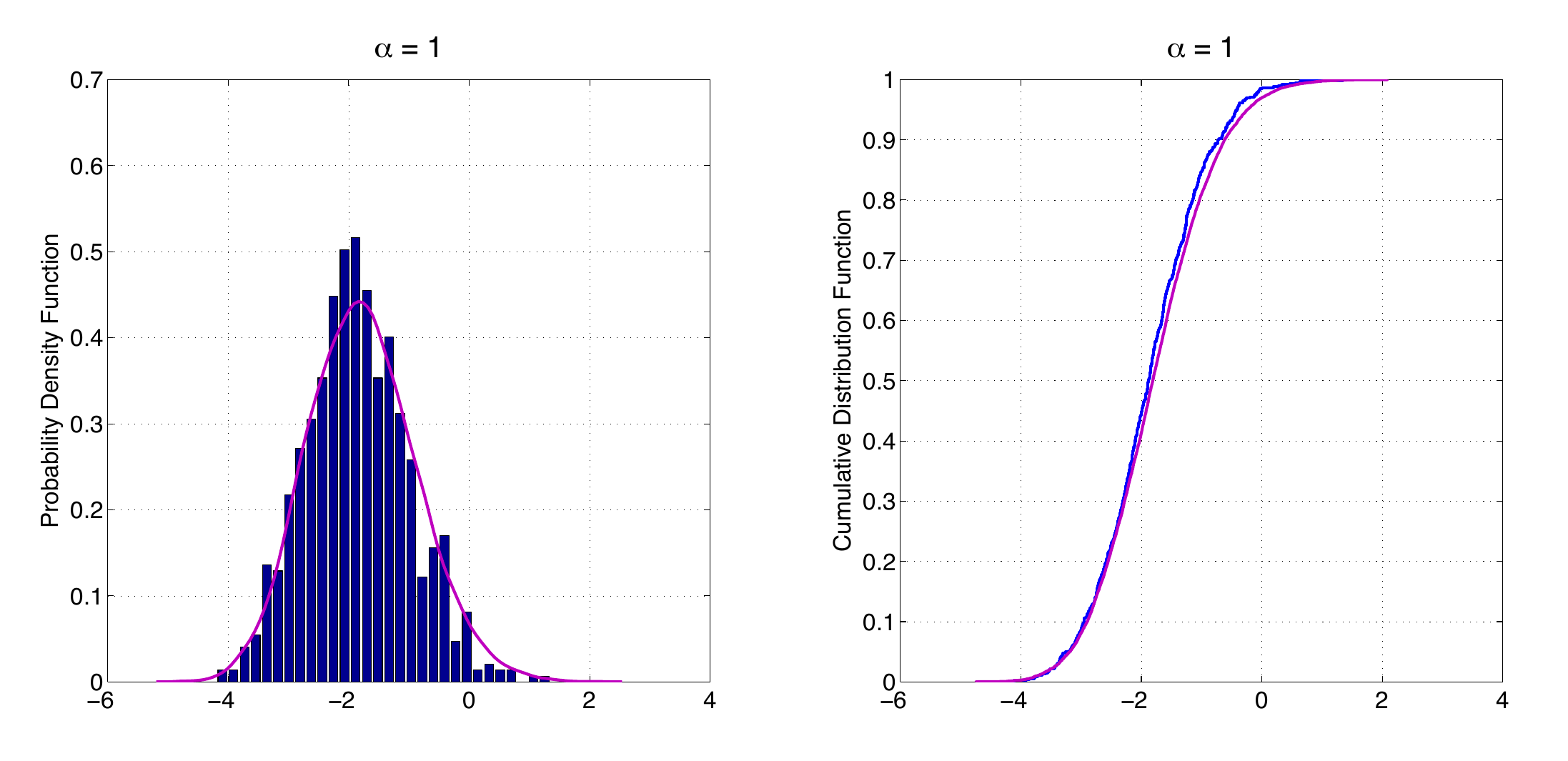}
   \caption{The histogram/empirical cdf   of $T:=(\lambda_{\kappa} - 2 \cdot n^{3/2})n^{-7/6}$ for $\alpha=1$ is compared with pdf/cdf of $F_2$  in  Theorem \ref{difficult_easy} at $n=5000$. The value of $T$ is independently sampled  for $800$ times.}
   \label{fig:plancherel}
 \end{center}
\end{figure}

The proof of this theorem  will be presented in Section \ref{Proof_difficult_easy}. In Figure \ref{fig:plancherel}, we simulate the limiting distribution of $\lambda_\kappa$ with $\alpha=1$ and compare it with $F_2$. For any $\alpha\ne 1$, we prove a weak result as follows.
\begin{theorem}\label{thm:LLLplan}
Let  $\kappa \vdash n$ and $\lambda_{\kappa}$ be as in (\ref{Jingle}) with parameter $\alpha>0$. If $\kappa$ follows the Plancherel measure, then for any sequence of real numbers $\{a_n > 0\}$ with $\lim_{n\to \infty}a_n = \infty$,
$$\frac{\lambda_{\kappa} -\left(2+ \frac{128}{27\pi^2} (\alpha-1) \right) n^{3/2} }{ n^{5/4} \cdot a_n } \to 0$$
in probability as $n\to\infty$.
\end{theorem}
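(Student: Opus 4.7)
The plan is to decompose $\lambda_\kappa$ using \eqref{Jingle} and \eqref{kernel_sea} as
\[
\lambda_\kappa = n(\ell(\kappa)-1) + \tfrac{\alpha}{2}\sum_{j} k_j^{2} - \tfrac{1}{2}\sum_i (k_i')^{2} - \tfrac{\alpha-1}{2}n,
\]
where $\ell(\kappa)=k_1'$ is the number of parts of $\kappa$. The linear-in-$n$ term is trivially $o(n^{5/4})$. Because $\dim(\kappa)=\dim(\kappa')$, the Plancherel measure is invariant under conjugation $\kappa\mapsto\kappa'$, so $\sum_j k_j^{2}$ and $\sum_i(k_i')^{2}$ share the same distribution; in particular
\[
E\bigl[\alpha\textstyle\sum_j k_j^{2}-\sum_i(k_i')^{2}\bigr]=(\alpha-1)\,E\bigl[\sum_j k_j^{2}\bigr].
\]
The theorem therefore reduces to controlling the mean and fluctuations of $\ell(\kappa)$ and of $\sum_j k_j^{2}$.

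For the length, applying the Baik et al.\ (1999) Tracy-Widom limit to $\kappa'$ (whose largest part equals $\ell(\kappa)$ and which is again Plancherel distributed) gives $\ell(\kappa)=2\sqrt n+O_P(n^{1/6})$, hence $n\,\ell(\kappa)=2n^{3/2}+O_P(n^{7/6})$, and $n^{7/6}=o(n^{5/4}a_n)$ for every $a_n\to\infty$. For the mean of $\sum_j k_j^{2}$ I would invoke the Vershik--Kerov--Logan--Shepp limit shape: after $\sqrt n$-rescaling the boundary of $\kappa$ concentrates on the Russian-coordinate curve
\[
\Omega(u)=\tfrac{2}{\pi}\bigl(u\arcsin(u/2)+\sqrt{4-u^{2}}\bigr),\qquad |u|\le 2,
\]
for which one computes $\Omega'(u)=\tfrac{2}{\pi}\arcsin(u/2)$. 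Changing variables to English coordinates $(x,y)=((\Omega+u)/2,(\Omega-u)/2)$ and expanding $\int_0^{2}\psi(x)^{2}\,dx=\tfrac18\int_{-2}^{2}(\Omega-u)^{2}(\Omega'+1)\,du$, the symmetries of the integrand together with the explicit evaluation of $\int_{-2}^{2}\Omega^{2}\,du$ yield $\int_0^{2}\psi(x)^{2}\,dx=\tfrac{256}{27\pi^{2}}$. Thus $n^{-3/2}E[\sum_j k_j^{2}]\to\tfrac{256}{27\pi^{2}}$, and combined with the length estimate this pins down the leading constant $2+\tfrac{128}{27\pi^{2}}(\alpha-1)$.

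It remains to upgrade the mean statements to concentration at scale $o_P(n^{5/4}a_n)$. Here I would invoke Kerov's central limit theorem (Ivanov and Olshanski, 2001): under the Plancherel measure, the shifted second power sum of $\kappa$ has Gaussian fluctuations of order $n$, and since $\sum_j k_j^{2}$ differs from it only by a deterministic-plus-lower-order correction, $\mathrm{Var}(\sum_j k_j^{2})=O(n^{2})$; Chebyshev's inequality then gives $\sum_j k_j^{2}-E[\sum_j k_j^{2}]=O_P(n)=o_P(n^{5/4}a_n)$, and the same bound transfers to $\sum_i(k_i')^{2}$ by conjugation. The main obstacle I expect is the limit-shape integration that pins down the constant $256/(27\pi^{2})$; once that calculation and the matching of Kerov's CLT normalization to $\sum_j k_j^{2}$ are in hand, the three pieces combine routinely to give the claim.
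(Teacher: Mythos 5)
Your decomposition of $\lambda_\kappa$ via $a(\kappa)=\tfrac12\sum_i(k_i')^2-\tfrac n2$ and the conjugation symmetry of the Plancherel measure is correct and is an attractive reorganization, and your limit-shape calculation of $\int_0^2\psi(x)^2\,dx=\tfrac{256}{27\pi^2}$ (equivalently $\int_{-2}^2(\Omega(x)-x)^2\,dx=\tfrac{32}{3}+\tfrac{1024}{27\pi^2}$) correctly identifies the centering constant. The first reduction — writing things in terms of $\ell(\kappa)$, the antisymmetric difference $\sum k_j^2-\sum(k_i')^2$, and the symmetric piece — is essentially the paper's decomposition via Lemma \ref{theater} into $m$, $a(\kappa')-a(\kappa)$, and $\sum_i ik_i$, written in different coordinates.

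However, the concentration step is a genuine gap, and it is precisely the difficult part of the theorem. Kerov's CLT (Theorem 5.5 of Ivanov--Olshanski 2001) concerns the functionals $p_k^{\#}$; for $k=2$ this is $p_2^{\#}=2(a(\kappa')-a(\kappa))=\sum_j k_j^2-\sum_i(k_i')^2$, the \emph{antisymmetric} combination, which indeed has Gaussian fluctuations of order $n$. It says nothing directly about $\sum_j k_j^2$ alone. In fact $\sum_j k_j^2$ is \emph{not} a ``deterministic-plus-lower-order correction'' away from $p_2^{\#}$: using Lemma \ref{wisdom} one has
\[
\sum_j k_j^2 \;=\; \tfrac14\,n^{3/2}\!\int_{-m/\sqrt n}^{k_1/\sqrt n}(g_\kappa(x)-x)^2\,dx \;-\; \tfrac13 m^3 \;+\; p_2^{\#},
\]
and the $m^3$ term alone fluctuates at scale $n^{7/6}$ (since $m=2\sqrt n+n^{1/6}\xi_n$ with $\xi_n\to F_2$ gives $m^3=8n^{3/2}+12n^{7/6}\xi_n+\cdots$). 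So $\mathrm{Var}(\sum_j k_j^2)$ is at least of order $n^{7/3}$, not $O(n^2)$, and your $O_P(n)$ bound fails. Worse, the remaining bulk contribution — the fluctuation of the integral of $(g_\kappa-x)^2$ — is not controlled by the standard Kerov CLT at all; this is exactly why the paper must prove Lemma \ref{pen_name}, which combines the Logan--Shepp large deviation bound (Lemma \ref{season_mate}), Tracy--Widom control of the endpoints (Lemma \ref{rate_function}), and Kerov's uniform-sup limit shape estimate, and even then only yields $o_P(n^{5/4}a_n)$ rather than an identifiable $n^{7/6}$-scale fluctuation. The paper's own Conjecture and the accompanying remark that one would need ``a stronger version of the central limit theorem by Kerov \dots for smooth functions'' make this limitation explicit: the standard polynomial-functional CLT you are invoking does not deliver the variance bound your proposal rests on. To close the gap you would need to replace that single sentence with an argument of the same weight as the paper's Lemmas \ref{season_mate}--\ref{pen_name}.

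A minor additional issue: you would also need to justify that the limit-shape convergence (which is a convergence in probability) actually gives the asymptotics of $E[\sum_j k_j^2]$; the paper sidesteps this by working throughout with $o_P$ statements rather than expectations, and some form of uniform integrability or tail control (which is again nontrivial) would be required if you insist on phrasing things in terms of means and variances.
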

The proof of Theorem \ref{thm:LLLplan} will be given in Section \ref{non_stop_thousand}. We provide a conjecture on the limiting distribution for $\lambda_{\kappa}$ with arbitrary $\alpha>0$ under Plancherel measure.

\begin{conjecture}
Let  $\kappa \vdash n$ and $\lambda_{\kappa}$ be as in (\ref{Jingle}). If $\kappa$ has the Plancherel measure, then
\beaa
\frac{\lambda_{\kappa} - \left(2 + \frac{128}{27\pi^2}(\alpha-1) \right)\cdot n^{3/2}}{n^{7/6} } \to (3-2\alpha)F_2
\eeaa
weakly as $n\to\infty$, where $F_2$ is as in (\ref{tai}).
\end{conjecture}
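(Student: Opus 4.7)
The plan is to extend the proof of Theorem \ref{difficult_easy} (the $\alpha=1$ case) to general $\alpha>0$ by isolating the $\alpha$-dependent contribution. Decompose
\[
\lambda_\kappa \;=\; \bigl[n(m-1)+a(\kappa')-a(\kappa)\bigr] + (\alpha-1)\,a(\kappa'),
\]
so the bracketed piece is exactly $\lambda_\kappa$ at $\alpha=1$. By Theorem \ref{difficult_easy} this first piece, after subtracting $2n^{3/2}$ and dividing by $n^{7/6}$, converges to $F_2$; the argument uses the Baik--Deift--Johansson (BDJ) theorem applied to $m=k_1'$ (via the transpose-symmetry of the Plancherel measure, which gives $(m-2\sqrt{n})/n^{1/6}\to F_2$), together with the variance bound $\mathrm{Var}(a(\kappa')-a(\kappa))=\mathrm{Var}(\sum_{\Box}c(\Box))=O(n)$ from Kerov's CLT on the power sum $p_2$, which makes the difference negligible at scale $n^{7/6}$. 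To recover the conjectured coefficient $3-2\alpha=1-2(\alpha-1)$, it therefore suffices to prove $(a(\kappa')-\tfrac{128}{27\pi^2}n^{3/2})/n^{7/6}\to -2F_2$ in probability, jointly with BDJ and for the \emph{same} Tracy--Widom random variable.

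The key algebraic input is the identity
\[
\sum_{i=1}^m k_i^2 \;=\; \sum_{i=1}^m (k_i-2\sqrt{n})^2 + 4n^{3/2} - 4nm,
\]
which follows from $\sum_i k_i=n$ and summing $(k_i-2\sqrt{n})^2=k_i^2-4\sqrt{n}k_i+4n$ over $i\le m$. Writing $M:=\tfrac{128}{27\pi^2}$ and $R:=\sum_{i=1}^m(k_i-2\sqrt{n})^2$, substitution into $a(\kappa')=\tfrac{1}{2}\sum_i k_i^2-n/2$ yields
\[
a(\kappa')-Mn^{3/2} \;=\; \tfrac{1}{2}R + (2-M)n^{3/2} - 2nm - n/2.
\]
By the Vershik--Kerov--Logan--Shepp limit shape, $E[R]=(2M+4)n^{3/2}+o(n^{3/2})$, so the leading $n^{3/2}$ terms cancel in expectation. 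The explicit term $-2n(m-2\sqrt{n})$ contributes exactly $-2F_2$ at scale $n^{7/6}$ via BDJ, which matches the desired coefficient, so the whole reduction hinges on controlling the $n^{7/6}$-scale fluctuation of $R$.

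The principal obstacle is precisely this control of $R$. Bulk (smooth linear-statistic) fluctuations are $O(\sqrt{n})$ by Kerov's CLT and thus negligible on the $n^{7/6}$ scale, and the top-edge contribution from the $O(1)$ largest rows is $O(n^{2/3})$ via the Airy-process description of the top of the Plancherel profile. The delicate point is the floating upper limit $i\le m$ of $R$: when $m>E[m]$ the extra rows have $k_i$ of order $1$, hence $(k_i-2\sqrt{n})^2\approx 4n$ each, and a naive count then contributes $\approx 4n^{7/6}F_2$ to $R$, which would cancel the explicit $-2nm$ fluctuation and appear to destroy the conjectured limit. Ruling this out (or identifying the true $n^{7/6}$-scale limit of $R$, so that it combines correctly with $-2nm$ to give $-2F_2$) requires a careful joint Airy-scale analysis at both edges of the Young diagram under Plancherel --- using the Fredholm-determinant description of the poissonized Plancherel measure and the discrete Bessel kernel --- together with a joint limit theorem ensuring that all $n^{7/6}$-scale fluctuations are ultimately driven by the single Tracy--Widom variable attached to $m$. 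Establishing this joint edge structure, and thereby separating the genuine $n^{7/6}$-fluctuation of $R$ from the mere reparametrization effect of its upper limit, is the main technical challenge I anticipate in carrying out the proof.
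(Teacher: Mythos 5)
The statement you are trying to prove is labeled a \emph{conjecture} in the paper; no proof is given, and the authors only remark that $3-2\alpha$ and $n^{7/6}$ ``can be seen'' from the proofs of Theorems \ref{difficult_easy} and \ref{thm:LLLplan} and would follow from a smooth-function version of Kerov's CLT. So there is no proof to match your attempt against. Your decomposition $\lambda_\kappa = \bigl[n(m-1)+a(\kappa')-a(\kappa)\bigr]+(\alpha-1)a(\kappa')$, the reduction to $\bigl(a(\kappa')-\tfrac{128}{27\pi^2}n^{3/2}\bigr)/n^{7/6}\to -2F_2$ jointly with BDJ, and the identity $\sum k_i^2 = R + 4n^{3/2}-4nm$ with $R=\sum_{i\le m}(k_i-2\sqrt n)^2$ are all correct and internally consistent with the paper's Lemmas \ref{theater} and \ref{wisdom}.

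But the cancellation you flag at the end is not a loose end to be ``ruled out'' by a finer joint Airy analysis --- it is exact, and it undermines the conjectured coefficient itself. Set $\xi_n=(m-2\sqrt n)n^{-1/6}$ and $M=\tfrac{128}{27\pi^2}$. Lemma \ref{wisdom} and \eqref{integral} give $\sum ik_i - Mn^{3/2}=\tfrac18 n^{3/2}Z_n - 2n^{7/6}\xi_n+O_p(n)$, with $Z_n=n^{-1/3}R_{n,1}+Z_n'+n^{-1/3}R_{n,2}$ as in the proof of Lemma \ref{pen_name}. On the left tail, $(g_\kappa(x)-x)^2 = 16+O(n^{-1/3})$ for $x$ between $-m/\sqrt n$ and $-2$, an interval of length $n^{-1/3}\xi_n$; since the leading term is deterministic given $m$, one gets $R_{n,1}=16\xi_n+o_p(1)$ with the \emph{same} random variable $\xi_n$ that appears in $-m^3/6$. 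On the right tail, $\Omega(x)-x\equiv 0$ for $x\ge 2$, so $R_{n,2}=o_p(1)$. Hence $\tfrac18 n^{1/3}Z_n = 2\xi_n + \tfrac18 n^{1/3}Z_n' + o_p(1)$, and the $\pm 2\xi_n$ cancel exactly: $(\sum ik_i - Mn^{3/2})/n^{7/6}=\tfrac18 n^{1/3}Z_n'+o_p(1)$, so $(a(\kappa')-Mn^{3/2})/n^{7/6}\to 0$ (not $-2F_2$) whenever $n^{1/3}Z_n'\to 0$ --- which is precisely what the smooth-function Kerov CLT would deliver, since $Z_n'$ is a bounded-support linear statistic of $g_\kappa-\Omega$ plus a smaller quadratic term. (One can cross-check by applying the conjugate of Lemma \ref{wisdom} to $\kappa'$: there the $-k_1^3/6$ term is cancelled by the right tail $\int_2^{k_1/\sqrt n}(\Omega(x)+x)^2\,dx$, so $a(\kappa')$ has no $n^{7/6}$-scale term via that route either.) The upshot is that the target $-2F_2$ you are trying to establish for $a(\kappa')$ is almost certainly not the correct limit: under the hypotheses the paper itself invokes, the conjecture should read $\to F_2$ (coefficient $1$) for all $\alpha$, not $(3-2\alpha)F_2$. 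Your computation correctly uncovers this; the missing step in your proposal is simply to push the estimate of $R_{n,1}$ to its precise limit and see that the cancellation is exact, rather than treating it as an open joint-edge problem.
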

The quantities ``$3-2\alpha$" and ``$n^{7/6}$" can be seen from the proofs of Theorems \ref{difficult_easy} and \ref{thm:LLLplan}. The conjecture will be confirmed if there is a stronger version of the central limit theorem by Kerov [Theorem 5.5 by Ivanov and Olshanski (2001)]: the central limit theorem still holds if the Chebyshev polynomials are  replaced by smooth functions.

One can also consider the same quantity under the $\alpha$-Jack  measure as in (\ref{eq:jack}), a generalization of the Plancherel measure. However, under this measure, the limiting distribution of the largest part of a random partition is not known. There is only a conjecture made by  Dolega and F\'{e}ray (2014). In virtue of this and our proof of Theorem \ref{difficult_easy}, we give a conjecture on $\lambda_{\kappa}$ studied in this paper. 

\begin{conjecture}
Let  $\kappa \vdash n$ and $\lambda_{\kappa}$ be as in (\ref{Jingle}) with parameter $\alpha>0$. If $\kappa$ follows the $\alpha$-Jack  measure [the ``$\alpha$" here is the same as that in (\ref{Jingle})], then
\beaa
\frac{\lambda_{\kappa} - 2\alpha^{-1/2}n^{3/2} }{n^{7/6} } \to F_{\alpha}
\eeaa
weakly as $n\to\infty$, and $F_{\alpha}$ is the $\alpha$-analogue of the Tracy-Widom distribution $F_2$ in (\ref{tai}). The law $F_{\alpha}$ is equal to $\Lambda_0$ stated in Theorem 1.1 from Ram\'{\i}rez {\it et al}. (2011).
\end{conjecture}

\subsection{A new result on random partitions}\label{sec:New_Random}

At the same time as proving Theorem \ref{cancel_temple}, we find the following  result on the restricted random partitions, which is also interesting on its own merits.

\begin{theorem}\lbl{finite_theorem} Given $m\geq 2$. Let $\ml{P}_n(m)$ and $\ml{P}_n(m)'$ be as in Theorem \ref{cancel_temple}. Let $(k_1, \cdots, k_m)\vdash n$ follow the uniform distribution on $\ml{P}_n(m)$ or $\ml{P}_n(m)'$. Then, as $n\to\infty$, $\frac{1}{n}(k_1, \cdots, k_m)$ converges weakly to  the uniform distribution on the ordered simplex
\bea\lbl{unifDelta}
\Delta:=\Big\{(x_1, \cdots, x_{m})\in [0,1]^{m};\,  x_1>\cdots >x_m\ \mbox{and}\ \sum_{i=1}^{m} x_i=1\Big\}.
\eea
\end{theorem}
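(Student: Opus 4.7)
\medskip

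\noindent\textbf{Proof proposal.} I would prove the theorem via the method of moments. Because the random vector $(k_1/n,\ldots,k_m/n)$ lies in the compact set $\overline{\Delta}$, by Stone--Weierstrass the weak convergence to the uniform distribution on $\Delta$ is equivalent to convergence of every mixed moment
$$E\!\left[\prod_{i=1}^m\Big(\frac{k_i}{n}\Big)^{r_i}\right]\ \longrightarrow\ \frac{1}{\mathrm{vol}_{m-1}(\overline{\Delta})}\int_{\overline{\Delta}}\prod_{i=1}^m x_i^{r_i}\,d\sigma(x),\qquad (r_1,\ldots,r_m)\in\mathbb{Z}_{\ge 0}^m,$$
where $\sigma$ is the natural $(m-1)$-dimensional Lebesgue measure on the affine hyperplane $\{\sum x_i=1\}\subset\mathbb{R}^m$. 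Since $\overline{\Delta}\setminus\Delta$ has $\sigma$-measure zero (it consists of the diagonals $x_i=x_{i+1}$), the limiting uniform measure on $\Delta$ and on $\overline{\Delta}$ are identical.

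\medskip

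\noindent For the uniform measure on $\ml{P}_n(m)$, observe that, after projecting out $k_m=n-\sum_{i<m}k_i$, the set $\ml{P}_n(m)$ is in bijection with $nQ\cap\mathbb{Z}^{m-1}$, where $Q\subset\mathbb{R}^{m-1}$ is the projection of $\overline{\Delta}$, a bounded convex polytope. Then
$$E\!\left[\prod_{i=1}^m\Big(\frac{k_i}{n}\Big)^{r_i}\right]\ =\ \frac{1}{n^{r_1+\cdots+r_m}\,|\ml{P}_n(m)|}\sum_{\kappa\in\ml{P}_n(m)}\prod_{i=1}^m k_i^{r_i}$$
is a Riemann sum of mesh $1/n$ for $(n^{m-1}/|\ml{P}_n(m)|)\int_Q\prod_i y_i^{r_i}\,dy$, with the convention $y_m:=1-\sum_{i<m}y_i$. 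Because $Q$ is Jordan-measurable and the integrand is continuous, the Riemann sum converges to the integral. Applying the same lattice-point argument to $\mathbf{r}=0$ yields $|\ml{P}_n(m)|\sim n^{m-1}\mathrm{vol}_{m-1}(Q)$ (equivalently, recovering the classical asymptotic $n^{m-1}/(m!(m-1)!)$ by extracting the coefficient of $x^n$ in $\prod_{i=1}^m(1-x^i)^{-1}$ via partial fractions). The two factors of $n^{m-1}$ cancel on division and the desired moment limits follow.

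\medskip

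\noindent For the uniform measure on $\ml{P}_n'(m)$, I would apply the shift bijection $(k_1,\ldots,k_m)\leftrightarrow(k_1-1,\ldots,k_m-1)$ identifying $\ml{P}_n'(m)$ with $\ml{P}_{n-m}(m)$: each coordinate $k_i/n$ is perturbed by $1/n\to 0$ and the overall rescaling factor $(n-m)/n\to 1$, so the above moment computation carries over verbatim. The main (and only genuine) obstacle is the lattice-discrepancy estimate for the polytope $nQ$: its boundary consists of the diagonals $x_i=x_{i+1}$ and the face $x_m=0$ rather than coordinate hyperplanes, so one must check carefully that these pieces are $(m-1)$-dimensionally Jordan-null and that the number of lattice points within bounded distance of $\partial(nQ)$ is $O(n^{m-2})$. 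This is a standard consequence of Davenport-type discrepancy bounds for piecewise-linear convex bodies; in particular, partitions with a repeated part $k_i=k_{i+1}$ lie in a sub-polytope of codimension one inside $nQ$ and contribute only $O(n^{m-2})=o(n^{m-1})$ to every count, which is precisely what makes the strict/weak-inequality distinction between $\Delta$ and $\overline{\Delta}$ harmless in the limit.
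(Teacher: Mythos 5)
Your proposal is correct and follows essentially the same route as the paper: both reduce the problem to approximating a lattice-point sum over (a projection of) $\mathcal{P}_n(m)$ by an integral over the corresponding simplex, and both hinge on the same key estimate that the lattice points within $O(1)$ of the boundary of the dilated polytope (repeated parts $k_i=k_{i+1}$, vanishing parts, violations of the affine constraint) number $O(n^{m-2}) = o(n^{m-1})$ and hence are negligible. The differences are minor. You use polynomial moments as test functions and invoke Stone--Weierstrass on the compact simplex; the paper instead uses the exponentials $e^{\langle t, k/n\rangle}$ (a Laplace-transform argument) --- both are determining classes here, so this choice is cosmetic. Your shift bijection $(k_1,\ldots,k_m)\leftrightarrow(k_1-1,\ldots,k_m-1)$ identifying $\mathcal{P}_n'(m)$ with $\mathcal{P}_{n-m}(m)$ is actually cleaner and more explicit than the paper's Lemma \ref{Carnige}, whose proof is omitted. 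Where the paper does extra work (Steps 4--5: the explicit bounding sets $Q_n$, $\mathcal{D}_{n,i}$, $E_j$, and the mean-value estimate controlling the Riemann-sum error), you invoke ``Davenport-type discrepancy bounds'' as a black box; that is the one place your argument is thinner than the paper's, though it is a genuine and correctly identified reduction. Since the boundary faces here are cut out by equalities $k_i=k_{i+1}$ or a single affine relation, the counting is entirely elementary --- as in the paper's (\ref{cow_sheep}) --- and no general discrepancy machinery is needed; writing out that count would close the only real gap in your sketch.
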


It is known from Rabinowitz (1989) that the volume of $\Delta=\frac{\sqrt{m}}{m!(m-1)!}.$ So the density function of the uniform distribution on $\Delta$ is equal to $\frac{m!(m-1)!}{\sqrt{m}}.$

If one picks a random partition $\kappa=(k_1, k_2,\cdots)\vdash n$ under the uniform measure, that is, under the uniform measure on $\ml{P}_n$, put the Young diagram of $\kappa$ in the first quadrant, and shrink the curve by a factor of $n^{-1/2}$, Vershik (1996) proves that the new random curve converges to the curve $e^{-cx} + e^{-cy}=1$ for $x, y>0$, where $c=\pi/\sqrt{6}.$ For the Plancherel measure, Logan and Shepp (1977) and  Vershik and Kerov (1977) prove that, for a rotated and shrunk Young diagram $\kappa$, its boundary curve (see the ``zig-zag" curve in Figure \ref{fig:kerov})  converges to
$\Omega(x)$, where
\bea\lbl{jilin}
\Omega(x)=\begin{cases}
\frac{2}{\pi}(x\arcsin\frac{x}{2} + \sqrt{4-x^2}),& \text{$|x|\leq 2$};\\
|x|, & \text{$|x|>2$}.
\end{cases}
\eea
As $m$ is no longer fixed but equal to $n$, the above law differs from the one presented in Theorem 6.  We will prove this result  in Section \ref{proof_finite_theorem}.

\section{Proofs}\lbl{main:proofs}
In this section we will prove the theorems stated earlier. Theorem \ref{finite_theorem} will be proved first because it  will be used later.

\subsection{Proof of Theorem \ref{finite_theorem}}\lbl{proof_finite_theorem}
The following conclusion is based on the fact that $\mathcal{P}_n(m)$ and $\mathcal{P}_n(m)'$ have asymptotically the same size, and is not difficult to prove. We skip its proof.
\begin{lemma}\lbl{Carnige} Review the notation in Theorem \ref{finite_theorem}. Assume,  under $\ml{P}_n(m)$,   $\frac{1}{n}(k_1, \cdots, k_m)$ converges weakly to  the uniform distribution on $\Delta$ as $n\to\infty$.
Then the same convergence also holds true under $\ml{P}_n(m)'$.
\end{lemma}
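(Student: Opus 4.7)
The plan is to reduce the statement to the assumed convergence on $\ml{P}_n(m)$ via a shift bijection. Consider the map
\[
\phi \colon \ml{P}_n'(m) \longrightarrow \ml{P}_{n-m}(m), \qquad (k_1, \ldots, k_m) \mapsto (k_1-1, k_2-1, \ldots, k_m-1).
\]
A partition in $\ml{P}_n'(m)$ has exactly $m$ positive parts, so all entries of $\phi(\kappa)$ are nonnegative, weakly decreasing, and sum to $n-m$; hence $\phi(\kappa) \in \ml{P}_{n-m}(m)$ (with the convention that partitions of length less than $m$ are padded with trailing zeros). The map $\phi$ is a bijection whose inverse adds $1$ to each of the $m$ coordinates, so it pushes the uniform distribution on $\ml{P}_n'(m)$ forward to the uniform distribution on $\ml{P}_{n-m}(m)$.

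By the hypothesis of the lemma, applied at $n-m$ in place of $n$, the scaled vector
\[
Y_n := \frac{1}{n-m}\bigl(k_1 - 1,\, k_2 - 1,\, \ldots,\, k_m - 1\bigr)
\]
converges weakly to the uniform distribution on $\Delta$ as $n\to\infty$. Since
\[
\frac{1}{n}(k_1, \ldots, k_m) = \frac{n-m}{n}\, Y_n + \frac{1}{n}(1, \ldots, 1),
\]
with $\frac{n-m}{n}\to 1$ and $\frac{1}{n}(1,\ldots,1)\to 0$, an application of Slutsky's theorem to $\mathbb{R}^m$-valued random vectors yields the same weak limit for $\frac{1}{n}(k_1,\ldots,k_m)$ under the uniform distribution on $\ml{P}_n'(m)$.

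No serious obstacle arises: the argument is a direct bijective reduction followed by a Slutsky-type perturbation, and invoking the assumption at the shifted index $n-m$ is legitimate because the hypothesis is stated for all $n\to\infty$ while $m$ is held fixed. A conceptually different route, avoiding the bijection altogether, would be to decompose $\ml{P}_n(m) = \ml{P}_n'(m) \sqcup \ml{P}_n(m-1)$ and note that the assumed limit, being supported on $\Delta$, places no mass on $\{x_m = 0\}$; this forces $|\ml{P}_n(m-1)|/|\ml{P}_n(m)|\to 0$, so the conditional uniform on $\ml{P}_n'(m)$ shares the same asymptotic law as the uniform on $\ml{P}_n(m)$. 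Either approach gives the conclusion, and this is presumably why the authors elect to omit the proof.
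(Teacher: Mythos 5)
Your proof is correct. The paper in fact omits the proof of this lemma entirely (``We skip its proof''), so there is nothing to compare against; but both routes you sketch are sound, and it is worth noting what each buys. The bijection $\phi(\kappa)=(k_1-1,\ldots,k_m-1)$ from $\ml{P}_n'(m)$ to $\ml{P}_{n-m}(m)$ is elegant and gives the result in one line once you observe that pushing forward the uniform measure by a bijection yields the uniform measure; the only care needed is the harmless Slutsky step handling the $O(1/n)$ affine shift, which you have. The alternative decomposition $\ml{P}_n(m)=\ml{P}_n'(m)\sqcup\ml{P}_n(m-1)$ is slightly more robust: since $\{x_m=0\}$ is closed and carries zero $\mu$-mass, the portmanteau theorem forces $|\ml{P}_n(m-1)|/|\ml{P}_n(m)|\to 0$, so conditioning on the complement of a vanishing-probability event does not change the weak limit. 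This second route is more in the spirit of Theorem \ref{finite_theorem} (it uses only the hypothesis at index $n$, not at the shifted index $n-m$) and generalizes immediately to other pairs of measures differing on a set whose limiting probability is zero. Either argument is complete; your proposal is a valid proof of the lemma.
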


We now introduce the  equivalence of two uniform distributions.
\begin{lemma}\lbl{paint_swim} Let $m\geq 2$ and $X_1> \cdots > X_m\geq 0$ be random variables. Recall (\ref{unifDelta}).  Set
\bea\lbl{thunder_storm}
W=\Big\{(x_1, \cdots, x_{m-1})\in [0, 1]^{m-1};\, x_1>\cdots  >x_m\geq 0\ \mbox{and}\  \sum_{i=1}^{m}x_i=1 \Big\}.
\eea
Then $(X_1, \cdots, X_m)$ follows the uniform distribution on $\Delta$ if and only if  $(X_1, \cdots, X_{m-1})$ follows the uniform distribution on $W$.
\end{lemma}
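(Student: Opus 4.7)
The plan is to observe that projection onto the first $m-1$ coordinates gives a bijection between $\Delta$ and $W$ that pushes uniform measure to uniform measure; once this is established, both directions of the equivalence are immediate. Concretely, define $\phi:\Delta\to W$ by $\phi(x_1,\ldots,x_m)=(x_1,\ldots,x_{m-1})$, with inverse $\psi:W\to\Delta$ given by $\psi(x_1,\ldots,x_{m-1})=(x_1,\ldots,x_{m-1},\,1-\sum_{i=1}^{m-1}x_i)$. The set $\Delta$ is a subset of the affine hyperplane $\{\sum x_i=1\}\subset\mathbb{R}^m$, which is $(m-1)$-dimensional, while $W$ is an open subset of $\mathbb{R}^{m-1}$, so both carry a natural $(m-1)$-dimensional Lebesgue/Hausdorff measure.

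The key step is to show that the pullback of the surface measure on $\Delta$ under $\psi$ is a constant multiple of Lebesgue measure on $W$. The Jacobian matrix $D\psi\in\mathbb{R}^{m\times(m-1)}$ has columns $e_i-e_m\in\mathbb{R}^m$ for $i=1,\ldots,m-1$, so a direct inner-product calculation gives
\[
 D\psi^{T}D\psi = I_{m-1} + \mathbf{1}\mathbf{1}^{T},
\]
where $\mathbf{1}$ denotes the all-ones column vector in $\mathbb{R}^{m-1}$. By the matrix determinant lemma, $\det(I_{m-1}+\mathbf{1}\mathbf{1}^{T})=1+\mathbf{1}^{T}\mathbf{1}=m$, so the area element on $\Delta$ equals $\sqrt{m}\,dx_1\cdots dx_{m-1}$. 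In particular $\mathrm{vol}(\Delta)=\sqrt{m}\cdot\mathrm{vol}(W)$, consistent with the formula $\mathrm{vol}(\Delta)=\sqrt{m}/(m!(m-1)!)$ cited just after the theorem statement.

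Combining these observations: if $(X_1,\ldots,X_m)$ is uniform on $\Delta$ with constant density $1/\mathrm{vol}(\Delta)$ against surface measure, its pushforward under $\phi$ has constant density $\sqrt{m}/\mathrm{vol}(\Delta)=1/\mathrm{vol}(W)$ against Lebesgue measure on $W$, so $(X_1,\ldots,X_{m-1})$ is uniform on $W$. The converse follows by pushing forward under $\psi$ using the same Jacobian. The only real subtlety is pinning down the convention that ``uniform on $\Delta$'' means the normalized $(m-1)$-dimensional Hausdorff measure on this hyperplane slice, which is precisely what produces the $\sqrt{m}$ factor and allows it to cancel; beyond this bookkeeping there is no substantive obstacle.
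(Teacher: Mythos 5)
Your proposal is correct and takes essentially the same approach as the paper: both compute the Gram determinant $\det(D\psi^{T}D\psi)=\det(I_{m-1}+\mathbf{1}\mathbf{1}^{T})=m$ and use the resulting $\sqrt{m}$ factor relating surface measure on $\Delta$ to Lebesgue measure on $W$, which cancels against the ratio $\mathrm{vol}(\Delta)/\mathrm{vol}(W)=\sqrt{m}$. The paper states the forward implication by citing a general fact that linear maps send uniform distributions to uniform distributions and reserves the explicit Jacobian computation for the converse, whereas you run both directions through the same Jacobian calculation, but the mathematical content is identical.
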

\begin{proof}[Proof of Lemma \ref{paint_swim}]
 First, assume that $(X_1, \cdots, X_m)$ follows the uniform distribution on $\Delta$. Then
$(X_1, \cdots, X_{m-1})^T=A(X_1, \cdots, X_m)^T$ where $A$ is the projection matrix with $A=(I_{m-1}, \mathbf{0})$ where $\mathbf{0}$ is a $(m-1)$-dimensional zero vector. Since a linear transform sends a uniform distribution to another uniform distribution [see p. 158 from Fristedt and Gray (1997)], and since  $A\Delta=W$, we get that   $(X_1, \cdots, X_{m-1})$ is uniformly distributed on $W$.

Now, assume $(X_1, \cdots, X_{m-1})$  is uniform  on $W$. First, it is well known that
\bea\lbl{silk_peer}
\mbox{the volume of}\ \Big\{(x_1, \cdots, x_{m})\in [0, 1]^{m};\, \sum_{i=1}^{m}x_i=1 \Big\}= \frac{\sqrt{m}}{(m-1)!};
\eea
see, e.g., Rabinowitz (1989). Thus, by symmetry,
\bea\lbl{moon_apple}
\mbox{the volume of } \Delta=\frac{\sqrt{m}}{m!(m-1)!}.
\eea
Therefore, to show that $(X_1, \cdots, X_m)$ has the uniform distribution on $\Delta$, it suffices to prove that, for any bounded measurable function $\varphi$ defined on $[0,1]^m$,
\bea\lbl{integral_Barnes}
E\varphi(X_1, \cdots, X_m) = \frac{m!(m-1)!}{\sqrt{m}}\int_{\Delta}\varphi(x_1, \cdots, x_m)\,dS
\eea
where the right hand side is a surface integral.
Seeing that $\mathcal{A}:\, (x_1, \cdots, x_{m-1}) \in W\to (x_1, \cdots, x_{m-1},1-\sum_{i=1}^{m-1}x_i) \in \Delta$   is a one-to-one and onto map, then by a change of variables formula [see, e.g.,  Proposition 6.6.1 from Berger and Gostiaux (1988)],
\beaa
\int_{\Delta}\varphi(x_1, \cdots, x_m)\,dS = \int_{W} \varphi\Big(x_1, \cdots, x_{m-1}, 1-\sum_{i=1}^{m-1}x_i\Big)\cdot \mbox{det}(B^TB)^{1/2}\, dx_1\cdots dx_{m-1}
\eeaa
where
\beaa
B:=\frac{\partial (x_1, \cdots, x_{m-1},1-\sum_{i=1}^{m-1}x_i)}{\partial (x_1, \cdots, x_{m-1})}=
\begin{pmatrix}
1 & 0 & \cdots & 0 \\
0 & 1 & \cdots & 0 \\
\vdots & \vdots & \vdots & \vdots\\
0 & 0 & \cdots & 1 \\
-1 & -1 & \cdots -1 & -1
\end{pmatrix}
_{m\times {(m-1)}}.
\eeaa
Trivially, $B^TB=I_{m-1} + ee^T$, where $e=(1, \cdots, 1)^T\in \mathbb{R}^{m-1}$, which has eigenvalues $1$ with $m-2$ folds and eigenvalue $m$ with one fold. Hence, $\mbox{det}(B^TB)=m$.   Thus, the right hand side of (\ref{integral_Barnes}) is identical to
\bea\lbl{light_coffee}
 m!(m-1)!\int_{W} \varphi\Big(x_1, \cdots, x_{m-1}, 1-\sum_{i=1}^{m-1}x_i\Big)\, dx_1\cdots dx_{m-1}.
\eea
It is well known that
\beaa
\mbox{the volume of}\ \Big\{(x_1, \cdots, x_{m-1})\in [0,1]^{m-1};\, \sum_{i=1}^{m-1}x_i \le 1\Big\}=\frac{1}{(m-1)!};
\eeaa
see, e.g., Stein (1966). Thus, by symmetry,
\bea\lbl{pen_sky}
\mbox{the volume of}\ W= \frac{1}{m!(m-1)!}.
\eea
This says that the density of the uniform distribution on $W$ is identical to $m!(m-1)!.$ Consequently, the left hand side of (\ref{integral_Barnes}) is equal to
\beaa
m!(m-1)!\int_{W}\varphi\Big(x_1, \cdots, x_{m-1}, 1-\sum_{i=1}^{m-1}x_i\Big)\, dx_1\cdots dx_{m-1},
\eeaa
which together with (\ref{light_coffee}) leads to (\ref{integral_Barnes}).
\end{proof}


\medskip

Fix $m\geq 2$. Let $\ml{P}_n(m)$ be the set of partitions of $n$ with lengths at most $m.$ It is known from Erd\"{o}s and Lehner (1941) that
\bea\lbl{ask_cup}
|\ml{P}_n(m)| \sim \frac{\binom{n-1}{m-1}}{m!}\sim \frac{n^{m-1}}{m!(m-1)!}
\eea
as $n\to\infty$.


Let us comment on the proof of Theorem \ref{finite_theorem} first. To show the weak convergence, for any bounded continuous function $f$ defined on $\overline{W}$, the closure of $W$, it suffices to prove
\begin{align}\label{eq:sum}
&\frac{1}{|\ml{P}_n(m)|} \sum_{(k_1, \cdots, k_m)\vdash n } f(\frac{k_1}{n}, \ldots, \frac{k_{m-1}}{n}) \to \frac{1}{\text{Vol}(W)} \int_{W} f(x_1,\ldots, x_{m-1})\,d{\bf x}
\end{align}
as $n\to \infty$. At first sight, it seems \eqref{eq:sum} can be obtained easily by using the  convergence of a multi-dimensional Riemann sum to the corresponding integral. However, the interaction among the parts $k_1,\ldots,k_{m}$ are complicated. The difficulty lies in controlling the LHS of \eqref{eq:sum} on the boundary of $\ml{P}_n(m)$ (that is, either two parts are equal or a certain part is zero), together with  the restriction $\sum_{i=1}^m k_i=n$. Therefore, we need to make extra efforts. The main proof of this section is given below.

\begin{proof}[Proof of Theorem \ref{finite_theorem}]

By Lemma \ref{Carnige}, it is enough to prove that,  under $\ml{P}_n(m)$,   $\frac{1}{n}(k_1, \cdots, k_m)$ converges weakly to  the uniform distribution on $\Delta$ as $n\to\infty$.

We first prove the case for $m=2.$ In fact, since $k_1+k_2=n$ and $k_1\geq k_2$, we have $\frac{1}{2}n\leq k_1\leq n$. Recall $W$ in (\ref{thunder_storm}). We know $W$ is the interval $(\frac{1}{2},1)$. So it is enough to check that $k_1$ has the uniform distribution on $(\frac{1}{2},1)$. Indeed, for any $x\in (\frac{1}{2}, 1),$ the distribution function of $\frac{k_1}{n}$ is given by
\beaa
P\Big((k_1, n-k_1);\, \frac{k_1}{n}\leq x\Big) & = & P\Big((k, n-k);\, \frac{n}{2}\leq k_1 \leq [nx]\Big)\\
&=&\frac{nx-\frac{1}{2}n +O(1)}{\frac{1}{2}n+O(1)}\to 2x-1
\eeaa
as $n\to \infty,$ which is exactly the cdf of the uniform distribution  on $(1/2, 1).$

As per (\ref{pen_sky}), the volume of $W$ in (\ref{thunder_storm}) equals $\frac{1}{m! (m-1)!}$. Thus the density of the uniform distribution on $W$ has the constant value of $m! (m-1)!$ on $W$. To prove the conclusion, it suffices to show the convergence of their moment generating functions, that is,
\bea\lbl{tiger_cat}
Ee^{(t_1k_1 +\cdots + t_mk_m)/n} \to Ee^{t_1\xi_{1} +\cdots + t_m\xi_{m}}
\eea
as $n\to\infty$ for all $(t_1, \cdots, t_m) \in \mathbb{R}^m$, where $(\xi_1, \cdots, \xi_{m-1})$ has the uniform distribution on $W$ by Lemma \ref{paint_swim}. We prove this by several steps.

{\it Step 1: Estimate of LHS of (\ref{tiger_cat}).} From (\ref{tiger_cat}), we know that the left hand side of (\ref{tiger_cat}) is identical to
\bea
& & \frac{1}{|\ml{P}_n(m)|} \sum_{(k_1, \cdots, k_m)} e^{(t_1k_1 +\cdots + t_mk_m)/n} \nonumber\\
& = & \frac{1}{|\ml{P}_n(m)|} \sum_{k_1> \cdots > k_m} e^{(t_1k_1 +\cdots + t_mk_m)/n} + \frac{1}{|\ml{P}_n(m)|} \sum_{k\in Q_n } e^{(t_1k_1 +\cdots + t_mk_m)/n} \lbl{drink_cloud}
\eea
where all of the sums above are taken over $\ml{P}_n(m)$ with the corresponding restrictions, and
\beaa
Q_n:=\{k=(k_1, \cdots, k_m)\vdash n;\, k_i=k_j\ \mbox{for some}\ 1\leq i<j\leq m\}.
\eeaa
Let us first estimate the size of $Q_n$. Observe
\beaa
Q_n=\cup_{i=1}^{m-1}\{k=(k_1, \cdots, k_m)\vdash n;\, k_i=k_{i+1}\}.
\eeaa
For any $\kappa=(k_1, \cdots, k_m)\vdash n$ with $k_i=k_{i+1}$, we know $k_1 + \cdots + 2k_i + k_{i+2} +\cdots +k_m=n$, which is a non-negative integer solutions of $j_1+\cdots + j_{m-1}=n$. It is easily seen that the number of non-negative integer solutions of the equation $j_1+\cdots + j_{m-1}=n$ is equal to $\binom{n+m-2}{m-2}.$ Therefore,
\bea\lbl{cow_sheep}
|Q_n| \leq (m-1)\binom{n+m-2}{m-2} \sim (m-1)\frac{n^{m-2}}{(m-2)!}
\eea
as $n\to\infty.$ Also, by (\ref{ask_cup}), $| \ml{P}_n(m) | \sim \frac{n^{m-1}}{m!(m-1)!}$. For $e^{(t_1k_1 +\cdots + t_mk_m)/n}\leq e^{|t_1| +\cdots + |t_m|}$ for all $k_i$'s, we see that the last term in (\ref{drink_cloud}) is of order $O(n^{-1})$. Furthermore, we can assume all the $k_i$'s are positive since $|\ml{P}_n(m-1)| = o(|\ml{P}_n(m)|)$. Consequently,
\bea
Ee^{(t_1k_1 +\cdots + t_mk_m)/n} & \sim & \frac{m!(m-1)!}{n^{m-1}} \sum e^{(t_1k_1 +\cdots + t_mk_m)/n} \lbl{cat_sun}
\eea
where $(k_1, \cdots, k_m)\vdash n$ in the last sum runs over all positive integers such that  $k_1>\cdots > k_m>0$.

{\it Step 2: Estimate of RHS of (\ref{tiger_cat}).} For a set $\mathcal{A}$, let $I_\mathcal{A}$ or $I(\mathcal{A})$ denote the indicator function of $\mathcal{A}$ which takes value $1$ on the set $A$ and 0 otherwise.
Review that the density function on $W$ is equal to the constant $m!(m-1)!$. For $\xi_1+\cdots + \xi_m=1$, we  have
\bea
& & Ee^{t_1\xi_{1} +\cdots + t_m\xi_{m}}\nonumber\\
&= & m! (m-1)! e^{t_m} \int_{[0,1]^{m-1}} e^{(t_1-t_m) x_1 + \cdots + (t_{m-1}-t_m) x_{m-1} } I_{\mathcal{A}} ~d x_1 \dots d x_{m-1}\nonumber\\
&= & m! (m-1)! e^{t_m}\int_{[0,1]^{m-1}} f(x_1,\cdots,x_{m-1}) I_{\mathcal{A}} ~d x_1 \dots d x_{m-1}, \lbl{hodge}
\eea
where
\bea
& &  \mathcal{A}=\Big\{(x_1, \cdots, x_{m-1})\in [0,1]^{m-1};\, x_1 > \cdots > x_{m-1} > 1- \sum_{i=1}^{m-1} x_i \ge 0\Big\}; \nonumber\\
& & f(x_1,\cdots,x_{m-1}):=e^{(t_1-t_m) x_1 + \cdots + (t_{m-1}-t_m) x_{m-1} }. \lbl{master-could}
\eea

{\it Step 3: Difference between LHS and RHS of (\ref{tiger_cat}).} Denote
\beaa
& & \mathcal{A}_n:=  \Big\{ (k_1, \cdots, k_{m-1})\in \{1,\cdots, n\}^{m-1};\, \frac{k_1}{n} > \cdots > \frac{k_{m-1}}{n} > 1- \sum_{i=1}^{m-1} \frac{k_i}{n} > 0\Big\};   \\
& & f_n (k_1,\cdots,k_{m-1}) := e^{(t_1-t_m)k_1/n +\cdots + (t_{m-1}-t_m) k_{m-1}/n}
\eeaa
for all $(k_1, \cdots, k_{m-1}) \in \mathcal{A}_n$. From (\ref{cat_sun}), we obtain
\beaa
& & Ee^{(t_1k_1 +\cdots + t_mk_m)/n} \\
&\sim & e^{t_m} \frac{m!(m-1)!}{n^{m-1}} \sum_{k_1> \cdots > k_m>0} e^{(t_1-t_m)k_1/n +\cdots + (t_{m-1}-t_m) k_{m-1}/n}\\
&= & m!(m-1)! e^{t_m} \sum_{k_1 = 1}^n \cdots \sum_{k_{m-1}=1}^n \int_{\frac{k_1-1}{n}}^{\frac{k_1}{n}} \cdots \int_{\frac{k_{m-1}-1}{n}}^{\frac{k_{m-1}}{n}} f_n (k_1,\cdots,k_m) I_{ \mathcal{A}_n } ~d x_1 \dots d x_{m-1}.
\eeaa
Writing the integral in (\ref{hodge}) similar to the above, we get that
\bea
& & Ee^{t_1\xi_{1} +\cdots + t_m\xi_{m}} - Ee^{(t_1k_1 +\cdots + t_mk_m)/n} \nonumber\\
&\sim & m! (m-1)! e^{t_m} \sum_{k_1 = 1}^n \cdots \sum_{k_{m-1}=1}^n \int_{\frac{k_1-1}{n}}^{\frac{k_1}{n}} \cdots \int_{\frac{k_{m-1}-1}{n}}^{\frac{k_{m-1}}{n}} \nonumber\\
& & \quad \quad \quad \quad \big( f (x_1,\cdots,x_{m-1}) I_{  \mathcal{A} }-f_n (k_1,\cdots,k_m) I_{ \mathcal{A}_n }  \big)~d x_1 \dots d x_{m-1}\nonumber
\eea
which again is identical to
\bea
&  & m! (m-1)! e^{t_m} \sum_{k_1 = 1}^n \cdots \sum_{k_{m-1}=1}^n \int_{\frac{k_1-1}{n}}^{\frac{k_1}{n}} \cdots \int_{\frac{k_{m-1}-1}{n}}^{\frac{k_{m-1}}{n}} \nonumber\\
& & \quad \quad \quad \quad \quad \quad  \quad \quad f (x_1,\cdots,x_{m-1}) \left( I_{  \mathcal{A} } -  I_{ \mathcal{A}_n } \right) ~d x_1 \dots d x_{m-1}\lbl{chicken_say}\\
&& \quad + m! (m-1)! e^{t_m} \sum_{k_1 = 1}^n \cdots \sum_{k_{m-1}=1}^n \int_{\frac{k_1-1}{n}}^{\frac{k_1}{n}} \cdots \int_{\frac{k_{m-1}-1}{n}}^{\frac{k_{m-1}}{n}} \nonumber\\
& &\quad \quad \quad \quad \quad \quad \quad \quad\left( f (x_1,\cdots,x_{m-1}) -  f_n (k_1,\cdots,k_{m-1})  \right) I_{ \mathcal{A}_n } ~d x_1 \dots d x_{m-1} \ \ \ \ \ \  \ \lbl{sky_leaves}\\
&= & m! (m-1)! e^{t_m} \left( \mathcal{S}_1 + \mathcal{S}_2\right),\nonumber
\eea
where $\mathcal{S}_1$ stands for the sum in (\ref{chicken_say}) and $\mathcal{S}_2$ stands for the sum in (\ref{sky_leaves}). The next step is to show both $\mathcal{S}_1 \to 0$ and $\mathcal{S}_2 \to 0$ as $n\to\infty$  and this completes the proof.

{\it Step 4: Proof of that $\mathcal{S}_2 \to 0$.} First, for the  term $\mathcal{S}_2$, given that $$\frac{k_1 - 1}{n} \le x_1 \le \frac{k_1}{n}, \cdots, \frac{k_{m-1} - 1}{n} \le x_{m-1} \le \frac{k_{m-1}}{n},$$ we have
\beaa
 | f(x_1, \cdots, x_{m-1}) - f_n(k_1, \cdots, k_{m-1}) | \leq \frac{1}{n}\exp\Big\{\sum_{i=1}^{m-1} |t_i -t_m|\Big\}\cdot \sum_{i=1}^{m-1} |t_i -t_m|.
\eeaa
Indeed, the above follows from the mean value theorem by considering $|g(1) - g(0) |$, where
$$g(s): = \exp\Big\{\sum_{i=1}^{m-1} (t_i - t_m) [ sx_i + (1-s) \frac{k_i}{n}]\Big\}.$$
Thus
$$|\mathcal{S}_2| \le  \Big(\frac{1}{n}\Big)^{m-1} n^{m-1}  \frac{\exp\big\{\sum_{i=1}^{m-1} |t_i -t_m|\big\}\cdot\sum_{i=1}^{m-1}|t_i -t_m| }{n} \to 0$$
as $n\to\infty$.

{\it Step 5. Proof of that $\mathcal{S}_1 \to 0$.} From (\ref{master-could}), we immediately see that
\bea\lbl{CCL}
\|f\|_{\infty}:=\sup_{(x_1, \cdots, x_{m-1})\in [0,1]^{m-1}}| f (x_1,\cdots,x_{m-1}) | \le e^{|t_1 - t_m| + \cdots |t_{m-1} - t_m|}.
\eea
By definition, as $k_i$ ranges from 1 to $n$ for $i=1,\dots, m-1$, the function $I_{ \mathcal{A}_n}$ equals 1 only when the followings hold
\bea\lbl{Chan}
\frac{k_1}{n} > \frac{k_2}{n}, \cdots, \frac{k_{m-2}}{n}> \frac{k_{m-1}}{n}, \frac{k_1 + \cdots k_{m-2} + 2k_{m-1}}{n} >1, \frac{k_1 + \cdots +  k_{m-1}}{n} < 1.
\eea
Similarly, $I_{\mathcal{A}}$ equals 1 only when
\bea\lbl{Tang}
x_1 > x_2, \cdots, x_{m-2}> x_{m-1}, x_1 + \cdots + x_{m-2} + 2x_{m-1} >1, x_1 + \cdots + x_{m-1} < 1.\ \ \
\eea
Let $\mathcal{B}_n$ be a subset of $\mathcal{A}_n$ such that
\beaa
\mathcal{B}_n= \mathcal{A}_n \cap \Big\{(k_1, \cdots, k_{m-1})\in \{1,2,\cdots, n\}^{m-1};\, \frac{k_{m-1}}{n} +\sum_{i=1}^{m-1} \frac{k_i}{n} > \frac{m}{n}+1\Big\}.
\eeaa
 Given $(k_1, \cdots, k_{m-1})\in \mathcal{B}_n$, for any
\bea\lbl{red_book}
\frac{k_1 - 1}{n} < x_1 < \frac{k_1}{n}, \cdots, \frac{k_{m-1} - 1}{n} < x_{m-1} < \frac{k_{m-1}}{n},
\eea
it is easy to verify from (\ref{Chan}) and (\ref{Tang}) that $I_{\mathcal{A}}=1$. Hence,
 \bea
& &  I_{\mathcal{A}_n}=I_{\mathcal{B}_n} + I_{\mathcal{A}_n\backslash\mathcal{B}_n} \nonumber\\
& \leq & I_{\mathcal{A}} + I\Big\{(k_1, \cdots, k_{m-1})\in \{1,\cdots, n\}^{m-1};\, 1 < \frac{k_{m-1}}{n} + \sum_{i=1}^{m-1} \frac{k_i}{n} \leq  \frac{m}{n}+1\Big\} \nonumber\\
& = & I_{\mathcal{A}} + \sum_{j=n+1}^{n+m}I_{E_j}\lbl{spring_cold}
 \eea
 where
\beaa
E_j:=\Big\{(k_1, \cdots, k_{m-1})\in \{1,\cdots, n\}^{m-1};\, k_1+\cdots + k_{m-2} +2k_{m-1}=j\Big\}
\eeaa
for $n+1 \leq j \leq m+n$. Similar to the argument as in {\it Step 1},
\bea\lbl{us_pen}
\max_{n\leq j \leq m+n}|E_j| =O(n^{m-2})
\eea
as $n\to\infty$. On the other hand, consider a subset of $\mathcal{A}_n^c:=\{1,\cdots, n\}^{m-1}\backslash \mathcal{A}_n$ defined by
\beaa
\mathcal{C}_n &:=&
\Big\{(k_1, \cdots, k_{m-1})\in \{1,2,\cdots, n\}^{m-1};\, \mbox{either}\  k_i\leq k_{i+1}-1\ \mbox{for some }
  1\leq i \leq m-2,\\
& & \ \mbox{or}\
 k_1+\cdots + k_{m-2} + 2k_{m-1} \leq   n,\ \mbox{or}\ k_1+\cdots  + k_{m-1} \geq  m+n-1 \Big\}.
\eeaa
Set $\mathcal{A}^c=[0,1]^{m-1}\backslash \mathcal{A}$. Given $(k_1, \cdots, k_{m-1})\in \mathcal{C}_n$, for any $k_i$'s and $x_i$'s satisfying (\ref{red_book}), it is not difficult to check that $I_{\mathcal{A}^c}=1$. Consequently,
\beaa
I_{\mathcal{A}_n^c} &= & I_{\mathcal{C}_n} + I\Big\{(k_1, \cdots, k_{m-1})\in \mathcal{A}_n^c;\, k_i> k_{i+1}-1\ \mbox{for all }\ 1\leq i \leq m-2,\\
& & ~~~~~~~~~~~~ k_1+\cdots + k_{m-2} + 2k_{m-1} >  n,\ \mbox{and}\ k_1+\cdots  + k_{m-1} <  m+n-1 \Big\}\\
& \leq & I_{\mathcal{A}^c} + I(\mathcal{D}_{n,1}) + I(\mathcal{D}_{n,2}),
\eeaa
or equivalently,
\bea\lbl{coca_warm}
I_{\mathcal{A}_n}
 \geq  I_{\mathcal{A}} - I(\mathcal{D}_{n,1}) -  I(\mathcal{D}_{n,2}),
\eea
where
\beaa
& & \mathcal{D}_{n,1}:=\bigcup_{i=1}^{m-2}\big\{(k_1, \cdots, k_{m-1})\in \{1,2,\cdots, n\}^{m-1};\, k_i=k_{i+1}\big\};\\
& & \mathcal{D}_{n,2}:=\bigcup_{i=n}^{ n+m-2}\big\{(k_1, \cdots, k_{m-1})\in \{1,2,\cdots, n\}^{m-1};\, k_1+\cdots +k_{m-1}=i\big\}.
\eeaa
By the same argument as in (\ref{cow_sheep}), we have $\max_{1\leq i \leq 2}|\mathcal{D}_{n,i}|=O(n^{m-2})$ as $n\to\infty$. Joining (\ref{spring_cold}) and (\ref{coca_warm}), and assuming (\ref{red_book}) holds, we arrive at
\beaa
|I_{\mathcal{A}_n}-  I_{\mathcal{A}}|\leq  I(\mathcal{D}_{n,1})+ I(\mathcal{D}_{n,2})  + \sum_{i=n+1}^{n+m}I_{E_i}
\eeaa
and $\sum_{i=1}^2|\mathcal{D}_{n,i}| + \sum_{i=n+1}^{n+m}|E_i|=O(n^{m-2})$ as $n\to\infty$ by (\ref{us_pen}). Review $\mathcal{S}_1$  in (\ref{chicken_say}). Observe that $\mathcal{D}_{n,i}$'s and $E_i$'s do not depend on $x$, we obtain from (\ref{CCL}) that
\beaa
 \mathcal{S}_1 &\leq & \|f\|_{\infty}\cdot\sum_{k_1 = 1}^n \cdots \sum_{k_{m-1}=1}^n \Big[\sum_{i=1}^2I(\mathcal{D}_{n,i}) + \sum_{i=n}^{n+m}I_{E_i}\Big]\int_{\frac{k_1-1}{n}}^{\frac{k_1}{n}} \cdots \int_{\frac{k_{m-1}-1}{n}}^{\frac{k_{m-1}}{n}}1 ~d x_1 \dots d x_{m-1}\\
& = &  \|f\|_{\infty}\cdot \Big(\sum_{i=1}^2|\mathcal{D}_{n,i}| + \sum_{i=n}^{n+m}|E_i|\Big)\cdot \frac{1}{n^{m-1}}\\
& = & O(n^{-1})
\eeaa
as $n\to\infty.$ The proof is completed.

\end{proof}

\subsection{Proof of Theorem \ref{cancel_temple}}\label{sec:proof:restricted-uniform}


We first rewrite the eigenvalues of the Laplace-Beltrami operator given in (\ref{Jingle}) in terms of $\kappa$  instead of a mixing of $\kappa$ and $\kappa'$. A  similar expression, which is essentially the same as ours, can be found on p. 596 from Dumitriu {\it et al}. (2007). So we skip the proof.

\begin{lemma}\lbl{theater} Let $\alpha>0$. Let $\lambda_{\kappa}$ be as in (\ref{Jingle}). For $\kappa=(k_1, \cdots, k_m) \vdash n$,  we have
\bea\lbl{green}
\lambda_{\kappa}=\big(m-\frac{\alpha}{2}\big)n +\sum_{i=1}^m (\frac{\alpha}{2}k_i-i)k_i.
\eea
\end{lemma}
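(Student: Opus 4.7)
The plan is purely algebraic: I would start from the defining formula (\ref{Jingle}) and eliminate every reference to the conjugate partition $\kappa'$ by using the second identity in (\ref{kernel_sea}).

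Specifically, the statement (\ref{kernel_sea}) gives the two equivalent formulas $a(\kappa)=\sum_{i=1}^{m}(i-1)k_i$ and $a(\kappa)=\sum_{i\ge 1}\binom{k_i'}{2}$. Applying the second of these to the transpose $\kappa'$ (whose parts are the $k_i$'s, since $(\kappa')'=\kappa$) immediately yields
\[
a(\kappa')=\sum_{i\ge 1}\binom{k_i}{2}=\frac{1}{2}\sum_{i=1}^{m}k_i^{2}-\frac{n}{2},
\]
using $\sum_i k_i=n$. This is the only nontrivial step and it requires no real work once one knows both forms of $a(\cdot)$.

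Next I would substitute this together with $a(\kappa)=\sum_{i=1}^{m}(i-1)k_i=\sum_{i=1}^{m}i\,k_i-n$ into
\[
\lambda_\kappa=n(m-1)+\alpha\,a(\kappa')-a(\kappa).
\]
Collecting the constant-in-$i$ multiples of $n$ gives $n(m-1)-\tfrac{\alpha}{2}n+n=\bigl(m-\tfrac{\alpha}{2}\bigr)n$, while the remaining terms combine to $\tfrac{\alpha}{2}\sum_i k_i^{2}-\sum_i i\,k_i=\sum_{i=1}^{m}\bigl(\tfrac{\alpha}{2}k_i-i\bigr)k_i$, which is exactly (\ref{green}).

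There is no real obstacle here; the lemma is just a bookkeeping identity, and the only subtlety worth noting in the write-up is the use of the second equality in (\ref{kernel_sea}) to convert $a(\kappa')$ into a quadratic expression in the original parts $k_i$. This is why the authors, as they note, simply skip the proof and cite the parallel calculation in Dumitriu \emph{et al.} (2007).
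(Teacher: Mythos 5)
Your proof is correct, and the algebra checks out: applying the identity $a(\sigma)=\sum_{i}\binom{\sigma_i'}{2}$ with $\sigma=\kappa'$ gives $a(\kappa')=\sum_i\binom{k_i}{2}=\tfrac12\sum_i k_i^2-\tfrac{n}{2}$, and substituting this together with $a(\kappa)=\sum_i i\,k_i-n$ into $\lambda_\kappa=n(m-1)+\alpha a(\kappa')-a(\kappa)$ yields exactly (\ref{green}). The paper itself omits the proof and points to Dumitriu \emph{et al.}\ (2007), so there is nothing to compare line by line, but your derivation is the standard direct computation and is the one that reference carries out as well.
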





 Let $\eta$ follow the chi-square distribution $\chi^2(v)$ with density function
\bea\lbl{his_her}
(2^{v/2}\Gamma(v/2))^{-1}x^{\frac{v}{2}-1}e^{-x/2}, ~~~~~~~ x>0.
\eea
The following lemma is on p. 486 from Kotz {\it et al}. (2000).
\begin{lemma}\lbl{Kotz} Let $m\geq 2$ and $\eta_1, \cdots, \eta_m$ be independent random variables with $\eta_i \sim \chi^2(v_i)$ for each $i$. Set $X_i=\eta_{i}/(\eta_1+\cdots + \eta_m)$ for each $i.$ Then $(X_1, \cdots, X_{m-1})$ has density
\beaa
f(x_1, \cdots, x_{m-1})=\frac{\Gamma(\frac12\sum_{j=1}^{m}v_j)}{\prod_{j=1}^{m}\Gamma(\frac12v_j)}\Big[\prod_{j=1}^{m-1}x_j^{(v_j/2)-1}\Big]
\Big(1-\sum_{j=1}^{m-1}x_j\Big)^{(v_m/2)-1}
\eeaa
on the set $U=\{(x_1, \cdots, x_{m-1})\in [0,1]^{m-1};\, \sum_{i=1}^{m-1}x_i\leq 1\}$.
\end{lemma}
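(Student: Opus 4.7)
The plan is to derive the joint density of $(X_1,\ldots,X_{m-1})$ from the product density of the independent $\eta_j$'s by a single change of variables. This is the standard derivation of the Dirichlet distribution as the normalization of independent gamma (here chi-square) variates, and I would use it with essentially no modification.

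The key step is to introduce the auxiliary variable $T=\sum_{j=1}^m\eta_j$ and consider the bijection $(\eta_1,\ldots,\eta_m)\mapsto(T,X_1,\ldots,X_{m-1})$ with $X_i=\eta_i/T$ for $i\le m-1$ and inverse $\eta_i=TX_i$ ($i\le m-1$), $\eta_m=T(1-\sum_{i=1}^{m-1}X_i)$, which is smooth on $(0,\infty)$ times the interior of $U$. The main computation is the Jacobian: writing out the $m\times m$ matrix of partials in the ordering $(T,X_1,\ldots,X_{m-1})$ and adding rows $1,\ldots,m-1$ to the $\eta_m$-row collapses that row to $(1,0,\ldots,0)$, while the remaining $(m-1)\times(m-1)$ block is $T\cdot I_{m-1}$, so the absolute Jacobian equals $T^{m-1}$.

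Substituting into the product of the chi-square densities from \eqref{his_her} and using $\sum_{i=1}^{m-1}X_i+(1-\sum_{i=1}^{m-1}X_i)=1$, the exponential factors telescope to $e^{-T/2}$ and the net power of $T$ becomes $\sum_{j=1}^m(v_j/2-1)+(m-1)=\frac12\sum_j v_j-1$. The joint density of $(T,X_1,\ldots,X_{m-1})$ therefore separates into a $T$-factor proportional to $T^{(\sum_j v_j)/2-1}e^{-T/2}$ and the $X$-factor $\prod_{j=1}^{m-1}X_j^{v_j/2-1}\bigl(1-\sum_{j=1}^{m-1}X_j\bigr)^{v_m/2-1}$, supported on $U$.

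Finally, integrating out $T$ over $(0,\infty)$ via $\int_0^\infty T^{s-1}e^{-T/2}\,dT=2^s\Gamma(s)$ with $s=\frac12\sum_j v_j$ supplies the normalizing constant; the factor $2^{(\sum v_j)/2}$ from this integral cancels exactly against $\prod_j 2^{v_j/2}$ in the denominators of the original chi-square densities, leaving the stated prefactor $\Gamma(\frac12\sum_j v_j)/\prod_j\Gamma(\frac12 v_j)$ on $U$. The only step that is not pure bookkeeping is the Jacobian, and the row-reduction trick above makes even that immediate; as a byproduct the argument also gives that $T\sim\chi^2\bigl(\sum_j v_j\bigr)$ is independent of $(X_1,\ldots,X_{m-1})$, a fact that will be convenient when applying the lemma back inside the proof of Theorem \ref{cancel_temple}.
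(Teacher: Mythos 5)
Your argument is correct and is the standard change-of-variables derivation of the Dirichlet density: the Jacobian computation (row-reducing the last row to $(1,0,\dots,0)$ and reading off $T^{m-1}$), the telescoping of the exponentials to $e^{-T/2}$, and the cancellation of the powers of $2$ against the $\chi^2$ normalizations all check out. Note, however, that the paper does not prove this lemma at all --- it simply cites p.~486 of Kotz, Balakrishnan and Johnson (2000) --- so you have supplied a complete proof where the paper relies on a reference; the added observation that $T\sim\chi^2(\sum_j v_j)$ is independent of $(X_1,\dots,X_{m-1})$ is a correct and well-known byproduct, though the paper's application of the lemma in the proof of Theorem \ref{cancel_temple} does not actually require it.
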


\begin{proof}[Proof of Theorem \ref{cancel_temple}]
By Lemma \ref{theater}, for $m$ is fixed and $k_1\leq n$, we have
\beaa
\frac{\lambda_{\kappa}}{n^2}=\frac{\alpha}{2}\cdot\sum_{i=1}^m\Big(\frac{k_i}{n}\Big)^2 + o(1)
\eeaa
as $n\to\infty.$ By Theorem \ref{finite_theorem}, under the uniform distribution on either  $\ml{P}_n(m)$ or $\ml{P}_n(m)'$, $\frac{1}{n}(k_1, \cdots, k_m)$ converges weakly to $(Z_1, \cdots, Z_m)$, which has the uniform measure on $\Delta$. Note that $\Delta$ is the ordered simplex, hence we can not get the desired conclusion by directly applying (i) or (ii) from the Comments after the statement of Theorem \ref{cancel_temple}. We will resolve this issue next.
%

Let $\xi_1, \cdots, \xi_m$ be independent random variables with the common density $e^{-x}I(x\geq 0)$. Set
\beaa
S_m=\xi_1 + \cdots + \xi_m~~~ \mbox{and} ~~~ X_i=\frac{\xi_{(i)}}{S_m}, \ \ \ 1\leq i \leq m
\eeaa
where $\xi_{(1)}>\cdots>\xi_{(m)}$ are the order statistics.  By the continuous mapping theorem and the fact $\sum_{i=1}^m\xi_{(i)}^2=\sum_{i=1}^m\xi_{i}^2$, we only need to show that $(Z_1, \cdots, Z_m)$ has the same distribution as that of $(X_1, \cdots, X_m)$. Review $W$ in Lemma \ref{paint_swim}. Recall that the volume of the convex body $W$ (as per \eqref{pen_sky}) is $(m!(m-1)!)^{-1}$. Therefore, by Lemma \ref{paint_swim}, it suffices to prove that
\bea\lbl{Big_sing}
E\varphi(X_1, \cdots, X_{m-1})=m!(m-1)!\int_{W}\varphi(x_1, \cdots, x_{m-1})\,dx_1\cdots dx_{m-1}
\eea
for any bounded and measurable function $\varphi$ defined on $[0, 1]^{m-1}.$
Recalling (\ref{his_her}), we know $\chi^2(2)/2$ has the exponential density function  $e^{-x}I(x\geq 0)$.
Taking $v_1=v_2=\cdots =v_m=2$ in Lemma \ref{Kotz}, we see that the density function of $\big(\frac{\xi_{1}}{S_m}, \cdots, \frac{\xi_{m-1}}{S_m}\big)$
on $U$ is equal to the constant $\Gamma(m)=(m-1)!$. Furthermore,
\beaa
E\varphi(X_1, \cdots, X_{m-1})
=  \sum_{\pi}E\Big[\varphi\Big(\frac{\xi_{\pi(1)}}{S_m}, \cdots, \frac{\xi_{\pi(m-1)}}{S_m}\Big)I(\xi_{\pi(1)}>\cdots > \xi_{\pi(m)})\Big],
\eeaa
where the sum  is taken over every permutation $\pi$ of $m.$ Write $S_m=\xi_{\pi(1)}+\cdots + \xi_{\pi(m)}.$ By the i.i.d. property of $\xi_i$'s, we get
\beaa
& & E\varphi(X_1, \cdots, X_{m-1})\\
& = &  m!\cdot E\Big[\varphi\Big(\frac{\xi_{(1)}}{S_m}, \cdots, \frac{\xi_{(m-1)}}{S_m}\Big)
I\Big(\frac{\xi_{(1)}}{S_m}> \cdots > \frac{\xi_{(m-1)}}{S_m}> 1-\frac{\sum_{i=1}^{m-1}\xi_{(i)}}{S_m}\Big)\Big]\\
& =& m!(m-1)!\int_{U}\varphi(x_1, \cdots, x_{m-1})I\Big(x_1> \cdots > x_{m-1}> 1-\sum_{i=1}^{m-1}x_i\Big)\,dx_1\cdots dx_{m-1}
\eeaa
for $\big(\frac{\xi_{(1)}}{S_m}, \cdots, \frac{\xi_{(m-1)}}{S_m}, 1-\frac{\sum_{i=1}^{m-1}\xi_{(i)}}{S_m}\big)$ is a function of $\big(\frac{\xi_{1}}{S_m}, \cdots, \frac{\xi_{m-1}}{S_m}\big)$ which has a constant density $(m-1)!$ on $U$ as shown earlier. Easily, the last term above is equal to the right hand side of (\ref{Big_sing}). The proof is then completed.
\end{proof}

\subsection{Proof of Theorem \ref{Gamma_surprise}}\label{sec:proof:restricted-Jack}

We start with a result on the restricted Jack probability measure $P_{n,m}^\alpha$ as in (\ref{eq:restrictedjack}).

\begin{lemma}\lbl{Sho_friend}(Matsumoto, 2008). Let $\alpha>0$ and $\beta=2/\alpha.$ For a given integer $m\geq 2$, let $\kappa=(k_{n,1}, \cdots, k_{n,m})\vdash n$ be chosen with probability  $P_{n,m}^{ \alpha}(\kappa).$ Then, as $n\to\infty$,
\beaa
\Big(\sqrt{\frac{\alpha m}{n}}\big(k_{n,i}-\frac{n}{m}\big)\Big)_{1\leq i \leq m}
\eeaa
converges weakly to a limiting distribution with density function
\bea\lbl{dance}
g(x_1, \cdots, x_m)=\mbox{const}\cdot e^{-\frac{\beta}{2}\sum_{i=1}^mx_i^2}\cdot \prod_{1\leq j < k \leq m}|x_j - x_k|^{\beta}
\eea
for all $x_1\geq x_2\geq \cdots \geq x_m$ such that $x_1+\cdots + x_m=0.$
\end{lemma}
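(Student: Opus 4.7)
The plan is to obtain an explicit closed-form expression for $P_{n,m}^{\alpha}(\kappa)$ as a function of the parts $k_1\geq\cdots\geq k_m\geq 0$, then rescale around $k_i=n/m$ and apply Stirling's formula to recognize the Gaussian $\beta$-ensemble density on the hyperplane $\{\sum x_i=0\}$.

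First, I would rewrite $c_\kappa(\alpha)c'_\kappa(\alpha)$ using the standard product identities for Jack hook lengths on partitions of length at most $m$ (see, e.g., Macdonald (1998), Chapter VI, or Stanley (1989)). These identities convert the product over cells $(i,j)\in\kappa$ into a product of $\Gamma$-function ratios indexed by pairs $1\leq i<j\leq m$ in the variables $k_i-k_j$, together with single-index $\Gamma$-factors in $k_i$. The outcome is a formula
$$P_{n,m}^{\alpha}(\kappa)=\frac{1}{Z_{n,m}(\alpha)}\prod_{1\leq i<j\leq m}R\bigl(k_i-k_j,\alpha,j-i\bigr)\cdot\prod_{i=1}^{m}S\bigl(k_i,\alpha,m-i\bigr),$$
where $R$ and $S$ are explicit $\Gamma$-ratios. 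The key qualitative feature is that $R(k_i-k_j,\alpha,j-i)\sim(k_i-k_j)^{\beta}$ as $k_i-k_j\to\infty$, where $\beta=2/\alpha$; this is what produces the Vandermonde weight $\prod_{i<j}|x_i-x_j|^{\beta}$ in the limit.

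Second, I change variables $x_i:=\sqrt{\alpha m/n}(k_i-n/m)$, for which $\sum_i x_i=0$ is automatic. The pair factors contribute $(n/(\alpha m))^{\beta m(m-1)/4}\prod_{i<j}|x_i-x_j|^{\beta}(1+o(1))$. For the single-index factors, I apply Stirling to $\log\Gamma(k_i+c)$ expanded around $k_i=n/m$ with $\delta_i:=k_i-n/m=O(\sqrt{n})$: the term linear in $\delta_i$ collects, after summation in $i$, a factor $\sum_i\delta_i=0$ and hence drops out, while the quadratic-in-$\delta_i$ term yields $-\frac{m}{2n}\cdot\beta\sum_i\delta_i^2=-\frac{\beta}{2}\sum_i x_i^2$, furnishing the Gaussian factor in $g$. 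All $\kappa$-independent pieces collapse into $Z_{n,m}(\alpha)$. To upgrade this pointwise convergence of the discrete density (on the lattice $\sqrt{\alpha m/n}(\mathbb{Z}^m\cap\{\textstyle\sum=0\})$) to weak convergence of the induced law on the hyperplane, I would invoke Scheff\'e's lemma after establishing locally uniform pointwise convergence of the rescaled density to $g$ and a uniform Gaussian-type upper bound on the pre-limit density coming from the same Stirling estimates; the latter also supplies the tightness at infinity. The normalizing constant is then forced to coincide with the Mehta/Selberg value for the Gaussian $\beta$-ensemble on the traceless hyperplane.

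The main obstacle is the Stirling bookkeeping: one must expand $\log\Gamma(n/m+c+\delta)$ to second order in $\delta/\sqrt{n}$ with uniform control on the $O(n^{-1/2})$ remainder, and verify that the combined second-order contributions from the $i<j$ factors and from the single-index factors reconstruct exactly the exponent $-\tfrac{\beta}{2}\sum x_i^2$, with no residual Gaussian correction hidden inside the Vandermonde side. A secondary difficulty is controlling the behavior near the diagonals $x_i=x_j$, which is benign for $\beta>0$ since $\prod_{i<j}|x_i-x_j|^{\beta}$ is locally integrable, but still needs to be reconciled with the discrete constraint $k_i>k_j$ (or $\geq$) coming from $\kappa\in\mathcal P_n(m)$.
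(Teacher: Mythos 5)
The paper does not prove this lemma at all: it is imported verbatim from Matsumoto (2008), and the only thing the authors add later (inside the proof of Theorem \ref{Gamma_surprise}) is the remark that the same argument works on $\mathcal{P}_n'(m)$. So there is no in-paper proof to compare against; what you have written is essentially a reconstruction of Matsumoto's original argument --- an exact Gamma-product expression for $c_\kappa(\alpha)c'_\kappa(\alpha)$ when $\ell(\kappa)\le m$, a Stirling expansion around $k_i=n/m$ on the lattice slice $\sum_i k_i=n$, and a local-limit-plus-domination upgrade to weak convergence --- rather than a genuinely different route.

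As a proof, however, the proposal is incomplete, and the one place where it is quantitative contains a slip. With $\delta_i:=k_i-n/m$ and $x_i=\sqrt{\alpha m/n}\,\delta_i$, the exponent you must produce is $\frac{\beta}{2}\sum_i x_i^2=\frac{m}{n}\sum_i\delta_i^2$, which is independent of $\alpha$; your claimed quadratic term $\frac{m\beta}{2n}\sum_i\delta_i^2$ agrees with this only when $\beta=2$, i.e.\ $\alpha=1$. The correct bookkeeping is that each index $i$ carries \emph{two} Gamma-type factors, one from $c_\kappa$ and one from $c'_\kappa$, both with argument $k_i+O(1)$ (at $\alpha=1$ this is the familiar weight $\prod_{i<j}(h_i-h_j)^2/\prod_i (h_i!)^2$ with $h_i=k_i+m-i$), and it is their combined second-order Stirling terms that give $\frac{m}{n}\delta_i^2$; the $\beta$-dependence of the limit enters only through the pair factors $\sim|k_i-k_j|^{\beta}$ and through the chosen scaling, not through the Gaussian coefficient in the $\delta$-coordinates. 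Verifying this, together with uniformity of the pair-factor asymptotics when $k_i-k_j=O(1)$, the Gaussian-type upper bound needed for the Scheff\'e/tightness step, and the identification of the normalizing constant with the Mehta-type integral on the traceless hyperplane, is exactly the content of the lemma, and your outline defers all of it. So the plan is the right one (and is the standard one), but as it stands it is a program rather than a proof; either carry out that computation in full or, as the paper does, cite Matsumoto (2008).
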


The idea of the proof of Theorem \ref{Gamma_surprise} below lies in that, by virtue of Lemma \ref{Sho_friend}, we are able to write $\lambda_{\kappa}$ in (\ref{Jingle}) in terms of the trace of a Wishart matrix. Due to this we get the Gamma density by evaluating the moment generating function (or the Laplace transform) of the trace through (\ref{dance}).
\begin{proof}[Proof of Theorem \ref{Gamma_surprise}]
Let
\beaa
Y_{n,i}=\sqrt{\frac{\alpha m}{n}}\big(k_{n,i}-\frac{n}{m}\big)
\eeaa
for $1\leq i \leq m$. By Lemma \ref{Sho_friend}, under $P_{n,m}^{\alpha}$,  we know $(Y_{n,1}, \cdots, Y_{n,m})$ converges weakly to a random vector $(X_1, \cdots, X_m)$ with density function $g(x_1, \cdots, x_m)$ as in (\ref{dance}). Checking the proof of Lemma \ref{Sho_friend}, it is easy to see that its conclusion still holds for $Q_{n,m}^{\alpha}$ without changing its proof. Solve for $k_{n,i}$'s to have
\beaa
k_{n,i}=\frac{n}{m} + \sqrt{\frac{n}{\alpha m}}Y_{n,i}
\eeaa
for $1\leq i \leq m$. Substitute these for the corresponding terms in \eqref{green} to see that
\beaa
& & \lambda_\kappa-\big(m-\frac{\alpha}{2}\big)n\\
& = & \sum_{i=1}^m\Big[\frac{\alpha}{2}\big(\frac{n}{m} +\sqrt{\frac{n}{m\alpha}}Y_{n,i}\big)-i\Big]\cdot \big(\frac{n}{m} +\sqrt{\frac{n}{m\alpha}}Y_{n,i}\big)\\
& = & \frac{\alpha}{2}\sum_{i=1}^m\big(\frac{n}{m} +\sqrt{\frac{n}{m\alpha}}Y_{n,i}\big)^2-\sum_{i=1}^m i\big(\frac{n}{m} +\sqrt{\frac{n}{m\alpha}}Y_{n,i}\big)\\
& = & \frac{\alpha}{2}\cdot\frac{n^2}{m} + \sqrt{\alpha}\cdot \big(\frac{n}{m}\big)^{3/2}\sum_{i=1}^mY_{n,i}+\frac{n}{2m}\sum_{i=1}^mY_{n,i}^2
- \frac{n(m+1)}{2}- \sqrt{\frac{n}{m\alpha}} \sum_{i=1}^m i Y_{n,i}\\
& = & \frac{\alpha}{2}\cdot\frac{n^2}{m} - \frac{n(m+1)}{2}+\frac{n}{2m}\sum_{i=1}^mY_{n,i}^2
- \sqrt{\frac{n}{m\alpha}} \sum_{i=1}^m i Y_{n,i}
\eeaa
since $\sum_{i=1}^mY_{n,i}=0.$ According to the notation of $a_n$ and $b_n$,
\beaa
\frac{\lambda_\kappa-a_n}{b_n}=\sum_{i=1}^mY_{n,i}^2
- \frac{2}{\sqrt{\alpha}}\sqrt{\frac{m}{n}} \sum_{i=1}^m i Y_{n,i}.
\eeaa
Since $(Y_{n,1}, \cdots, Y_{n,m})$ converges weakly to the random vector $(X_1, \cdots, X_m)$, taking $$h_1(y_1, \cdots, y_m)=\sum_{i=1}^miy_i \quad \text{and} \quad h_2(y_1, \cdots, y_m)=\sum_{i=1}^my_i^2,$$ respectively, by the continuous mapping theorem,
\beaa
\sum_{i=1}^miY_{n,i}\to \sum_{i=1}^miX_i\ \ \ \mbox{and}\ \ \ \ \sum_{i=1}^mY_{n,i}^2\to \sum_{i=1}^mX_i^2
\eeaa
weakly as $n\to\infty$. By the Slutsky lemma,
\beaa
\frac{\lambda_\kappa-a_n}{b_n}=\sum_{i=1}^mY_{n,i}^2 + O_p\big(n^{-1/2}\big) \to \sum_{i=1}^mX_i^2
\eeaa
weakly as $n\to\infty.$ Now let us calculate the moment generating function of $\sum_{i=1}^mX_i^2$. Recall (\ref{dance}). Let $C_n$ be the normalizing constant such that
\beaa
g(x_1, \cdots, x_m)=C_m\cdot e^{-\frac{\beta}{2}\sum_{i=1}^mx_i^2}\cdot \prod_{1\leq j < k \leq m}|x_j - x_k|^{\beta}
\eeaa
is a probability density function on the subset of $\mathbb{R}^{m}$ such that  $x_1\geq x_2\geq \cdots \geq x_m$ and  $x_1+\cdots + x_m=0.$
 We then have
\bea
Ee^{t\sum_{i=1}^mX_i^2}&=& C_m\int_{\mathbb{R}^{m-1}}e^{t\sum_{i=1}^mx_i^2}g(x_1, \cdots, x_m)\,dx_1, \cdots, dx_{m-1} \nonumber\\
& = & C_m \int_{\mathbb{R}^{m-1}}e^{-\frac{\beta}{2}\sum_{i=1}^m(1-\frac{2t}{\beta})x_i^2}\prod_{1\leq j < k \leq m}|x_j - x_k|^{\beta}\,dx_1, \cdots, dx_{m-1}\nonumber\\
& = & \Big(1-\frac{2t}{\beta}\Big)^{-\frac{1}{2}\cdot(\frac{m(m-1)}{2}\beta + (m-1))}\cdot \int_{\mathbb{R}^{m-1}}g(y_1, \cdots, y_m)\,dy_1, \cdots, dy_{m-1}\nonumber\\
& = & \Big(1-\frac{2t}{\beta}\Big)^{-\frac{1}{4}(m-1)\cdot(m\beta + 2)} \lbl{laugh_sniff}
\eea
for $t<\frac{\beta}{2}$, where a transform $y_i=(1-\frac{2t}{\beta})^{1/2}x_i$ is taken in the third step for $i=1,\cdots, m-1.$ It is easy to check that the term in (\ref{laugh_sniff}) is also the generating function of the Gamma distribution with density function $h(x)=\frac{1}{\Gamma(v)\, (2/\beta)^{v}}x^{v-1}e^{-\beta x/2}$
for all $x\geq 0$, where $v=\frac{1}{4}(m-1)\cdot(m\beta + 2).$ By the uniqueness theorem, we know the conclusion holds.
\end{proof}

\subsection{Proof of Theorem \ref{vase_flower}}\lbl{proof_vase_flower}

The following lemma is Theorem 2 from Pittel (1997).

\begin{lemma}\lbl{wuliangye}
Let $\kappa=(k_1, \cdots, k_m)$ be a partition of $n$ chosen according to the uniform measure on $\mathcal{P}(n)$. Then
\beaa
k_j=
\begin{cases}
\big(1+ O_p((\log n)^{-1})\big) E(j)\ \ \ \ \ \text{if\ \ $1\leq j \leq \log n$;}\\
E(j) + O_p(nj^{-1}\log n)^{1/2}\ \ \ \ \ \text{if\ \ $\log n\leq j \leq n^{1/2}$;} \\
E(j) + O_p(e^{-cjn^{-1/2}}n^{1/2}\log n)^{1/2} \ \ \ \ \ \text{if\ \ $n^{1/2}\leq j \leq \kappa_n$};\\
(1+O_p(a_n^{-1}))E(j)\ \ \ \ \ \text{if\ \ $\kappa_n \leq j \leq k_n$}
\end{cases}
\eeaa
uniformly as $n\to\infty$, where $c=\pi/\sqrt{6}$,
\beaa
& & E(x)=\frac{\sqrt{n}}{c}\log \frac{1}{1-e^{-cxn^{-1/2}}} \ \ \ \mbox{for $x>0$}, \\
& &   \kappa_n=\left[\frac{\sqrt{n}}{4c}\log n\right]
\ \ \ \mbox{and}\ \ \ \ k_n=\Big[\frac{\sqrt{n}}{2c}(\log n-2\log\log n-a_n)\Big]
\eeaa
with $a_n\to\infty$ and $a_n=o(\log \log n)$ as $n\to\infty.$
\end{lemma}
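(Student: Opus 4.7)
The plan is to establish the asymptotics via Fristedt's \emph{grand canonical} device: identify the uniform measure on $\ml{P}_n$ with the law of an independent sequence $\{a_s\}_{s\geq 1}$ of geometric random variables with $P(a_s=k)=q^{sk}(1-q^s)$ for $k\geq 0$, conditioned on the weight constraint $T_n:=\sum_s s\,a_s=n$. Choosing $q=e^{-c/\sqrt n}$ with $c=\pi/\sqrt 6$ makes the unconditional expectation of $T_n$ equal to $n$ up to a lower-order term. In this picture $a_s$ is interpreted as the multiplicity of $s$ as a part of $\kappa$, so every statistic of $\kappa$ becomes a function of the independent sequence $\{a_s\}$.

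The second ingredient is the conjugate-diagram duality: if $S_r:=\sum_{s\geq r} a_s$ denotes the number of parts of $\kappa$ that are at least $r$, then $k_j\geq r\iff S_r\geq j$, so controlling $k_j$ reduces to controlling partial tail sums of independent geometrics. A direct computation combined with Euler--Maclaurin gives
\[E[S_r]=\sum_{s\geq r}\frac{q^s}{1-q^s}=-\frac{\sqrt n}{c}\log(1-e^{-cr/\sqrt n})+O(1)=E(r)+O(1),\]
which identifies $E(\cdot)$ as (the inverse of) Vershik's limit-shape curve $e^{-cx}+e^{-cy}=1$, and
\[\mathrm{Var}(S_r)=\sum_{s\geq r}\frac{q^s}{(1-q^s)^2},\]
which is of order $\sqrt n\cdot(1-e^{-cr/\sqrt n})^{-1}$ for $r\lesssim \sqrt n$ and decays exponentially once $r\gtrsim \sqrt n$.

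Next I would establish tail bounds for $|S_r-E(r)|$ that are sharp in each range of $r$: Chebyshev's inequality handles the bulk, while for $r$ of order $\sqrt n\log n$ (where the mean $E(r)$ is $O(1)$) I would invoke a Bernstein-type estimate for sums of independent geometrics to produce exponentially small tails. Inverting the duality $\{k_j\geq r\}=\{S_r\geq j\}$ and exploiting that $E$ is smooth and monotone, a first-order Taylor expansion converts each concentration statement for $S_r$ into the matching statement for $k_j$. The four error rates listed in the lemma correspond precisely to the four regimes for the ratio $\sqrt{\mathrm{Var}(S_{r})}/|E'(r)|$ evaluated at $r=E^{-1}(j)$: the bulk regime $j\in[\log n, n^{1/2}]$ produces the square-root error $(nj^{-1}\log n)^{1/2}$, the exponentially decaying regime $j\in[n^{1/2},\kappa_n]$ produces the exponential-times-square-root error, the very small $j$ regime produces the multiplicative $O_p((\log n)^{-1})$ error, and the extreme tail $j\in[\kappa_n, k_n]$ produces the $O_p(a_n^{-1})$ multiplicative error that reflects the logarithmic tail of the geometric multiplicities.

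The final step transfers everything from the grand canonical ensemble back to the uniform measure on $\ml{P}_n$. Conditioning inflates each probability by a factor of $1/P(T_n=n)$, and a local central limit theorem combined with Hardy--Ramanujan gives $P(T_n=n)\asymp n^{-1/2}$ (this follows from the identity $p(n)\,q^n\prod_s(1-q^s)=P(T_n=n)$ together with $p(n)\sim(4n\sqrt 3)^{-1}e^{\pi\sqrt{2n/3}}$), so tail bounds transfer at the cost of a single $\sqrt n$ factor. I expect the main obstacle to be producing grand-canonical tail bounds sharp enough that, after this $\sqrt n$-inflation, they still yield the delicate rates $O_p((\log n)^{-1})$ at $j\leq \log n$ and $O_p\bigl((e^{-cjn^{-1/2}}n^{1/2}\log n)^{1/2}\bigr)$ in the intermediate regime, and especially uniform over all $j$ simultaneously; this forces one to keep sub-exponential (Bernstein) constants under careful control and to track how $\mathrm{Var}(S_r)$ decays exponentially once $r\gtrsim \sqrt n$, rather than relying only on pointwise second-moment estimates.
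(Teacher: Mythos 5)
The paper does not prove this lemma at all: it is imported verbatim as Theorem 2 of Pittel (1997), so there is no internal argument to compare you against. Your outline is, in spirit, the standard route behind that theorem --- Fristedt's conditioning device (independent geometric multiplicities $a_s$ with parameter $q=e^{-c/\sqrt n}$, conditioned on $\sum_s s\,a_s=n$) combined with the duality $\{k_j\ge r\}=\{S_r\ge j\}$ for $S_r=\sum_{s\ge r}a_s$, and the self-inverse curve $E$ --- so you are essentially reconstructing the cited source rather than anything in this paper.

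As a proof, however, it has genuine gaps. First, the transfer step is quantitatively off: $\mathrm{Var}\big(\sum_s s\,a_s\big)=\sum_s s^2 q^s(1-q^s)^{-2}\asymp n^{3/2}$, so the local CLT gives $P(T_n=n)\asymp n^{-3/4}$, not $n^{-1/2}$; and because conditioning inflates probabilities by this polynomial factor, \emph{every} regime needs super-polynomially small grand-canonical tails, so your statement that ``Chebyshev's inequality handles the bulk'' cannot survive the transfer --- the $\log n$ factors in the stated rates exist precisely so that Chernoff/Bernstein tails, applied uniformly in $j$ via a union bound over $\asymp\sqrt n\log n$ indices, beat the inflation, and these estimates are exactly the content of Pittel's argument that you defer as an ``expected obstacle'' rather than supply. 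Second, $E[S_r]=E(r)+O(1)$ is false for small $r$: the sum-versus-integral error is of order $q^r/(1-q^r)\asymp\sqrt n/r$, which must be tracked (it is ultimately harmless only because the first regime of the lemma tolerates a relative error $O_p(1/\log n)$). Third, the extreme regime $\kappa_n\le j\le k_n$, where $E(j)$ decreases from about $n^{1/4}$ to about $e^{a_n/2}\log n$, is the delicate one: a multiplicative error $O_p(a_n^{-1})$ there requires sharp two-sided tails for sums of geometrics whose total mean grows only slowly, and nothing in the sketch pins these down. In short, the strategy is the correct one, but it is a strategy; carrying it out amounts to redoing Pittel (1997), which is why the paper simply cites that theorem.
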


Based on Lemma \ref{wuliangye}, we get the following law of large numbers. This is a key estimate in the proof of Theorem \ref{vase_flower}.
\begin{lemma}\lbl{happy_uniform}
Let $\kappa=(k_1, \cdots, k_m)$ be a partition of $n$ chosen according to the uniform measure on $\mathcal{P}(n)$. Then $n^{-3/2}\sum_{j=1}^m k_j^2 \to a$ in probability as $n\to\infty$, where
\bea\lbl{learn}
a=c^{-3}\int_0^1\frac{\log^2 (1-t)}{t}\,dt
\eea
and $c=\pi/\sqrt{6}$. The above conclusion also holds if ``$\sum_{j=1}^m k_j^2$" is replaced by ``$2\sum_{j=1}^m jk_j$".
\end{lemma}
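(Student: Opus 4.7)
The plan is to apply Lemma~\ref{wuliangye} by writing $k_j=E(j)+R_j$ for $1\le j\le k_n$ and expanding
\[
\sum_{j=1}^{m} k_j^{2} \;=\; \sum_{j=1}^{k_n}E(j)^{2} \;+\; 2\sum_{j=1}^{k_n}E(j)R_j \;+\; \sum_{j=1}^{k_n}R_j^{2} \;+\; \sum_{j=k_n+1}^{m}k_j^{2}.
\]
The leading deterministic term is
\[
\sum_{j\ge 1}E(j)^{2}\;=\;\frac{n}{c^{2}}\sum_{j\ge 1}\log^{2}\!\bigl(1-e^{-cj/\sqrt n}\bigr),
\]
which is a right-endpoint Riemann sum with step $\Delta s=1/\sqrt n$ in the variable $s=j/\sqrt n$. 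The substitutions $u=1-e^{-cs}$ (so that $du=c(1-u)\,ds$) followed by $t=1-u$ yield
\[
\frac{1}{c^{2}}\int_{0}^{\infty}\log^{2}(1-e^{-cs})\,ds \;=\; \frac{1}{c^{3}}\int_{0}^{1}\frac{\log^{2}(1-t)}{t}\,dt \;=\; a,
\]
so $n^{-3/2}\sum_{j\ge 1}E(j)^{2}\to a$. The only subtlety is the integrable singularity at $s=0$; since $f(s)=\log^{2}(1-e^{-cs})$ is monotone decreasing and the first Riemann block contributes $f(1/\sqrt n)/\sqrt n = O((\log n)^{2}/\sqrt n)$, the Riemann-sum convergence is well-behaved.

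The task then reduces to showing $\sum_j R_j^{2}=o_p(n^{3/2})$ and, via Cauchy--Schwarz, $\sum_j E(j)R_j = o_p(n^{3/2})$. I will run through Pittel's four ranges individually: in range (i), $R_j=O_p((\log n)^{-1})E(j)$ with $E(j)=O(\sqrt n\log n)$, giving $\sum R_j^{2}=O_p(n\log n)$; in range (ii), $R_j^{2}=O_p(n\log n / j)$ and harmonic summation gives $O_p(n(\log n)^{2})$; in range (iii), $R_j^{2}=O_p(e^{-cj/\sqrt n}\sqrt n\log n)$ and a geometric sum yields $O_p(n\log n)$; in range (iv), $E(j)\asymp(\sqrt n/c)e^{-cj/\sqrt n}$ has exponential decay, $\sum E(j)^{2}=O(n)$, and therefore $\sum R_j^{2}=O_p(a_n^{-2}n)$. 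For the tail beyond $k_n$, the regime-(iv) estimate at $j=k_n$ gives $k_{k_n}=O_p(e^{a_n/2}\log n)$, so by monotonicity
\[
\sum_{j>k_n}k_j^{2} \;\le\; k_{k_n}\sum_{j>k_n}k_j \;\le\; n\,k_{k_n} \;=\; o_p(n^{3/2}),
\]
using $a_n=o(\log\log n)$. Combining these bounds proves $n^{-3/2}\sum_j k_j^{2}\to a$ in probability.

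The companion statement about $2\sum_j j k_j$ follows from the combinatorial identity
\[
2\sum_{j\ge 1} j k_j \;=\; \sum_{j\ge 1}(k_j')^{2} + n,
\]
obtained by writing $2\sum_j j k_j = 2\sum_{i\ge 1}\sum_{j\le k_i'} j = \sum_{i\ge 1}k_i'(k_i'+1)$. The uniform measure on $\mathcal{P}_n$ is invariant under conjugation $\kappa\mapsto\kappa'$, so $\sum_j(k_j')^{2}$ has the same distribution as $\sum_j k_j^{2}$, and the first part gives $n^{-3/2}\sum_j(k_j')^{2}\to a$ in probability; the additive $n$ is absorbed by the $n^{3/2}$ normalization. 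The main obstacle throughout is the careful case-by-case control of the remainders across Pittel's four ranges, particularly confirming that the parts with index exceeding the cutoff $k_n$ contribute negligibly despite the weaker pointwise control available there.
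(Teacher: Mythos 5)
Your proof is correct, and it departs from the paper's argument in a way worth noting. Both proofs build on Pittel's Lemma (Lemma~\ref{wuliangye}), but you organize the comparison of $\sum_j k_j^2$ with $\sum_j E(j)^2$ differently: you decompose $k_j=E(j)+R_j$ and bound $\sum R_j^2$ separately across Pittel's four regimes, then handle the tail $j>k_n$ deterministically via the monotone bound $\sum_{j>k_n}k_j^2\le k_{k_n}\sum_{j>k_n}k_j\le n\,k_{k_n}=O_p\bigl(n(\log n)e^{a_n/2}\bigr)=o_p(n^{3/2})$ using $a_n=o(\log\log n)$. The paper instead truncates at the (somewhat arbitrary) cutoff $\frac{1}{6}\sqrt{n}\log n$, establishes a uniform multiplicative error bound $\max_j|k_j/E(j)-1|\to 0$ on $[1,\frac16\sqrt n\log n]$, and then invokes the Erd\H{o}s--Lehner result $\frac{\pi}{\sqrt{6n}}m-\log\frac{\sqrt{6n}}{\pi}\Rightarrow$ Gumbel to control the contribution of $j$ between the cutoff and $m$. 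Your tail treatment sidesteps the Erd\H{o}s--Lehner input entirely, which is a mild simplification. The more striking difference is the second half: you obtain the claim for $2\sum_j jk_j$ via the combinatorial identity $2\sum_j jk_j=\sum_j(k_j')^2+n$ together with conjugation-invariance of the uniform measure on $\mathcal{P}_n$, which makes $\sum_j(k_j')^2\stackrel{d}{=}\sum_j k_j^2$ and reduces the second statement to the first. The paper instead repeats the Riemann-sum machinery for $\sum_j jE(j)$ and relies on the analytic identity (their display \eqref{little_brother}) $\int_0^1\frac{\log^2(1-t)}{t}\,dt=2\int_0^1\frac{1}{t}\log\frac1t\log\frac{1}{1-t}\,dt$ to make the constants match. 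Your conjugation route is cleaner and explains \emph{why} the limiting constant for $2\sum jk_j$ coincides with that for $\sum k_j^2$, rather than verifying it by integral manipulation.
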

\begin{proof}[Proof of Lemma \ref{happy_uniform}]
Define
\beaa
F(x)=\log \frac{1}{1-e^{-cxn^{-1/2}}}
\eeaa
for $x>0.$ Note that both $E(x)$ and $F(x)$ are decreasing in $x\in (0, \infty).$

{\it Step 1}. We first claim that
\bea\lbl{rubber_band}
\max_{1\leq j \leq \frac{1}{6}\sqrt{n}\log n}\big|\frac{k_j}{E(j)}-1\big| \to 0
\eea
in probability as $n\to\infty.$ (The choice of $1/6$ is rather arbitrary here. Actually, any number strictly less than $1/2c$ would work). We prove this next.

Notice
\beaa
\max_{x\geq 1}E(x)=E(1)
&=& -\frac{\sqrt{n}}{c} \log \big(1-e^{-cn^{-1/2}}\big)\nonumber\\
& \sim & -\frac{\sqrt{n}}{c} \log \big(cn^{-1/2}\big) \sim \frac{1}{2c}\sqrt{n}\log n
\eeaa
as $n\to\infty$ since $1-e^{-x}\sim x$ as $x\to 0.$ Observe
\beaa
\frac{\sqrt{nj^{-1}\log n}}{E(j)}=-c\sqrt{\log n}\cdot \frac{j^{-1/2}}{\log \big(1-e^{-cjn^{-1/2}}\big)}.
\eeaa
Therefore,
\beaa
\max_{\log n \leq j \leq (\log n)^2}\frac{\sqrt{nj^{-1}\log n}}{E(j)}\leq \frac{c}{F(\log^2n)} \to 0
\eeaa
and
\beaa
\max_{\log^2n \leq j \leq n^{1/2}}\frac{\sqrt{nj^{-1}\log n}}{E(j)} \leq c\frac{(\log n)^{-1/2}}{F(n^{1/2})} \to 0
\eeaa
as $n\to\infty$. By Lemma \ref{wuliangye},
\bea\lbl{mine}
\max_{\log n \leq j \leq \sqrt{n}}\Big|\frac{k_j}{E(j)}-1\Big| =o_p(1)
\eea
as $n\to\infty$. Now we consider the case for $n^{1/2}\leq j \leq \kappa_n$ where $\kappa_n$ is as in Lemma \ref{wuliangye}. Trivially, $\frac{1}{4c} >\frac{1}{6}$. Notice that
\beaa
\max_{n^{1/2}\leq j \leq (1/6)\sqrt{n}\log n}\frac{(e^{-cjn^{-1/2}}n^{1/2}\log n)^{1/2}}{E(j)}
&\leq & \frac{(e^{-c}n^{1/2}\log n)^{1/2}}{E((1/6)\sqrt{n}\log n)} \\
&= &\frac{(ce^{-c/2})n^{-1/4}(\log n)^{1/2}}{F((1/6)\sqrt{n}\log n)}.
\eeaa
Evidently,
\bea\lbl{child_baby}
F\big(\frac{1}{6}\sqrt{n}\log n\big)=-\log \big(1-e^{-(c/6)\log n}\big) \sim \frac{1}{n^{c/6}}
\eea
as $n\to\infty.$ This says
\beaa
\max_{n^{1/2}\leq j \leq (1/6)\sqrt{n}\log n}\Big|\frac{k_j}{E(j)}-1\Big| =o_p(1)
\eeaa
as $n\to\infty$ by Lemma \ref{wuliangye}. This together with (\ref{mine}) and the first expression of $k_j$ in Lemma \ref{wuliangye}  concludes  (\ref{rubber_band}), which is equivalent to that
\bea
\lbl{skill}
k_j=E(j) +\epsilon_{n,j}E(j)
\eea
uniformly for all $1\leq j\leq  (1/6)\sqrt{n}\log n$, where $\epsilon_{n,j}$'s satisfy
\bea\lbl{gymnastics}
H_n:=\sup_{1\leq j\leq (1/6)\sqrt{n}\log n}|\epsilon_{n,j}| \to 0
\eea
in probability as $n\to\infty$.

{\it Step 2}. We approximate the two sums in (\ref{sing}) and (\ref{song}) below by integrals in this step. The assertions (\ref{skill}) and (\ref{gymnastics}) imply that
\bea
& & \sum_{1\leq j \leq (1/6)\sqrt{n}\log n }k_j^2 =\Big(\sum_{1\leq j \leq (1/6)\sqrt{n}\log n}E(j)^2\Big) (1+o_p(1)); \lbl{sing}\\
& & \sum_{1\leq j \leq (1/6)\sqrt{n}\log n }jk_j =\Big(\sum_{1\leq j \leq (1/6)\sqrt{n}\log n}jE(j)\Big) (1+o_p(1))\lbl{song}
\eea
as $n\to\infty.$ For $E(x)$ is decreasing in $x$ we have
\beaa
\int_1^mE(x)^2\,dx=\sum_{j=1}^{m-1}\int_j^{j+1}E(x)^2\,dx \leq \sum_{j=1}^{m-1}E(j)^2
\eeaa
for any $m\geq 2.$ Consequently,
\beaa
\sum_{1\leq j \leq (1/6)\sqrt{n}\log n}E(j)^2 \geq \int_1^{m_1}E(x)^2\,dx
\eeaa
with $m_1=\big[\frac{1}{6}\sqrt{n}\log n\big]$. Similarly,
\beaa
\int_0^{m+1}E(x)^2\,dx=\sum_{j=0}^{m}\int_j^{j+1}E(x)^2\,dx \geq \sum_{j=1}^{m+1}E(j)^2
\eeaa
for any $m\geq 1.$ The two inequalities imply
\bea\lbl{sneeze}
\int_1^{m_1}E(x)^2\,dx \leq \sum_{1\leq j \leq (1/6)\sqrt{n}\log n}E(j)^2 \leq \int_0^{\infty}E(x)^2\,dx.
\eea
By the same argument,
\bea\lbl{rain_snow}
\int_1^{m_1}E(x)\,dx \leq \sum_{1\leq j \leq (1/6)\sqrt{n}\log n}E(j) \leq \int_0^{\infty}E(x)\,dx.
\eea
Now we estimate $\sum_{1\leq j \leq (1/6)\sqrt{n}\log n}j E(j)$. Use the inequality
\beaa
jE(j+1)\leq \int_j^{j+1}xE(x)\,dx \leq (j+1) E(j)
\eeaa
to have
\beaa
(j+1)E(j+1)-E(j+1)\leq \int_j^{j+1}xE(x)\,dx \leq j E(j) + E(j)
\eeaa
for all $j\geq 0$. Sum the inequalities over $j$ and use (\ref{rain_snow}) to get
\bea
\int_1^{m_1}x E(x)\,dx - \int_0^{\infty}E(x)\,dx &\leq & \sum_{1\leq j \leq (1/6)\sqrt{n}\log n}jE(j) \nonumber\\
& \leq & \int_0^{\infty}xE(x)\,dx + \int_0^{\infty}E(x)\,dx.\lbl{did}
\eea

{\it Step 3}. In this step, we evaluate integrals $\int E(x)\,dx$, $\int E(x)^2\,dx$ and $\int xE(x)\,dx$. First,
\beaa
\int_0^{\infty} E(x)\,dx= \frac{\sqrt{n}}{c}\int_0^{\infty}\log \frac{1}{1-e^{-cxn^{-1/2}}}\,dx.
\eeaa
Set
\bea\lbl{same_city}
t=e^{-cxn^{-1/2}}\ \ \mbox{then}\ \ x=\frac{\sqrt{n}}{c}\log \frac{1}{t}\ \ \mbox{and}\ \ dx=-\frac{\sqrt{n}}{ct}dt.
\eea
Hence
\bea\lbl{scissor}
\int_0^{\infty} E(x)\,dx=\frac{n}{c^2}\int_0^1\frac{\log (1-t)}{-t}\,dt=O(n)
\eea
as $n\to \infty$ considering the second integral above is finite. Using the same discussion, we have
\beaa
& & \int_0^{\infty} E(x)^2\,dx=\frac{n^{3/2}}{c^3}\int_0^1\frac{\log^2 (1-t)}{t}\,dt; \\
& & \int_0^{\infty} xE(x)\,dx=\frac{n^{3/2}}{c^3}\int_0^1\frac{1}{t}\log \frac{1}{t}\log\frac{1}{1-t}\,dt.
\eeaa
By the two identities above (3.44) from Pittel (1997), we have
\bea\lbl{little_brother}
\int_0^1\frac{\log^2 (1-t)}{t}\,dt=2\int_0^1\frac{1}{t}\log \frac{1}{t}\log\frac{1}{1-t}\,dt.
\eea
From the same calculation as in (\ref{same_city}), we see that
\beaa
\int_1^{m_1}E(x)^2\,dx=\frac{n^{3/2}}{c^3}\int_{e^{-cm_1n^{-1/2}}}^{e^{-cn^{-1/2}}}\frac{\log^2 (1-t)}{t}\,dt \sim \frac{n^{3/2}}{c^3}\int_0^1\frac{\log^2 (1-t)}{t}\,dt
\eeaa
as $n\to\infty$ since $m_1=\big[\frac{1}{6}\sqrt{n}\log n\big]$. By the same reasoning,
\beaa
\int_1^{m_1}xE(x)\,dx \sim \frac{n^{3/2}}{2c^3}\int_0^1\frac{\log^2 (1-t)}{t}\,dt.
\eeaa
The above two integrals and that in (\ref{scissor}) join (\ref{sneeze}), (\ref{rain_snow}) and (\ref{did}) to conclude
\bea
& & \sum_{1\leq j \leq (1/6)\sqrt{n}\log n}E(j)^2 \sim \frac{n^{3/2}}{c^3}\int_0^1\frac{\log^2 (1-t)}{t}\,dt; \lbl{music_hi}\\
& &  \sum_{1\leq j \leq (1/6)\sqrt{n}\log n}j E(j) \sim \frac{n^{3/2}}{2c^3}\int_0^1\frac{\log^2 (1-t)}{t}\,dt \lbl{sun_hello}
\eea
as $n\to\infty.$

{\it Step 4}. We will get the desired conclusion in this step. Now connecting (\ref{music_hi}) and (\ref{sun_hello}) with (\ref{sing}) and (\ref{song}) we obtain
\bea
& & \sum_{1\leq j \leq (1/6)\sqrt{n}\log n }k_j^2 = a n^{3/2} (1+o_p(1)); \lbl{sing2}\\
& & \sum_{1\leq j \leq (1/6)\sqrt{n}\log n }jk_j =\frac{a}{2} n^{3/2}(1+o_p(1)) \lbl{green_carpet}
\eea
as $n\to\infty$, where ``$a$" is as in (\ref{learn}).  For the number of parts of $\kappa=(k_1,\cdots,k_m)$, Erd\"os and Lehner (1941) obtain that
\bea\lbl{seabed}
\frac{\pi}{ \sqrt{6n} } m - \log \frac{ \sqrt{6n} }{\pi} \to \mu
\eea
weakly as $n\to\infty$ where $\mu$ is a probability measure with cdf $F_{\mu}(v) = e^{-e^{-v}}$
 for every $v \in \mathbb{R}$. See also Fristedt (1993). This implies that
\bea\lbl{red_oak}
P\big(m > \frac{1}{c} \sqrt{n}\log n \big) \to 0
\eea
as $n\to\infty$. Now, for any $\epsilon>0$, by (\ref{sing2}),
\bea
& & P\Big(\big|a-n^{-3/2}\sum_{j=1}^m k_j^2\big|\geq \epsilon\Big) \nonumber\\
& \leq & P\Big(\big|a-n^{-3/2}\sum_{1\leq j \leq \frac{1}{6}\sqrt{n}\log n} k_j^2\big|\geq \epsilon/2\Big) + P\Big( n^{-3/2} \sum_{ \frac{1}{6}\sqrt{n}\log n \leq j \leq m }k_j^2 \geq \epsilon/2 \Big) \nonumber\\
& \leq &  P\Big(m > \frac{1}{c} \sqrt{n}\log n \Big) +  P \Big(n^{-3/2} \sum_{ \frac{1}{6}\sqrt{n}\log n \leq j \leq m }k_j^2 \geq \epsilon/2,  m \leq \frac{1}{c} \sqrt{n}\log n \Big) +o(1) \nonumber\\
& \leq &  P \Big(n^{-3/2} \sum_{ \frac{1}{6}\sqrt{n}\log n \leq j \leq \frac{1}{c} \sqrt{n}\log n }k_j^2 \geq \epsilon/2 \Big) +o(1) \lbl{money_monkey}
\eea
as $n\to\infty$.
Denote by $l_n$  the least integer greater than or equal to $\frac{1}{6}\sqrt{n}\log n$. Seeing that $k_j$ is  decreasing in $j$, it is seen from (\ref{skill}) and then (\ref{child_baby}) that
\bea
k_j  \le  k_{l_n}&=& E(l_n)(1+o_p(1)) \nonumber\\
& \leq &  E\big(\frac{1}{6}\sqrt{n}\log n\big)(1+ o_p(1)) \nonumber\\
& \sim & c^{-1} n^{(1/2)-(c/6)} (1+ o_p(1)) \lbl{donkey_elephant}
\eea
for all $\frac{1}{6}\sqrt{n}\log n \leq j \leq \frac{1}{c} \sqrt{n}\log n$ as $n\to\infty$. This implies
\beaa
n^{-3/2} \sum_{ \frac{1}{6}\sqrt{n}\log n \leq j \leq \frac{1}{c} \sqrt{n}\log n }k_j^2
& \le & C\cdot n^{-3/2}  \sqrt{n}(\log n)  \big(n^{1/2-c/6}\big)^2 (1+o_p(1)) \\
& \sim & Cn^{-c/3} (\log n) (1+ o_p(1))= o_p(1)
\eeaa
as $n\to\infty$, where $C$ is a constant. This together with (\ref{money_monkey}) yields the first conclusion of the lemma. Similarly, by (\ref{green_carpet}) and (\ref{red_oak}), for any $\epsilon>0$,
\beaa
& &  P\Big(\big|\frac{a}{2}-n^{-3/2}\sum_{j=1}^m j k_j\big| \geq  \epsilon\Big)\\
 &\leq &  P\Big(\big| \frac{a}{2} -n^{-3/2}\sum_{1\leq j \leq \frac{1}{6}\sqrt{n}\log n} j k_j \big|\geq \epsilon/2 \Big)\\
 & & ~~~ + P \Big(n^{-3/2} \sum_{ \frac{1}{6}\sqrt{n}\log n \leq j \leq \frac{1}{c} \sqrt{n}\log n } j k_j \geq \epsilon/2 \Big)+  P\Big(m > \frac{1}{c} \sqrt{n}\log n \Big) \to 0
\eeaa
as $n\to\infty$ considering
\beaa
n^{-3/2}\sum_{ \frac{1}{6}\sqrt{n}\log n \leq j \leq \frac{1}{c}\sqrt{n}\log n }jk_j&\leq & C\cdot n^{-3/2}\cdot n^{(1/2)-(c/6)}(\sqrt{n}\log n)^2(1+o_p(1))\\
& = & Cn^{-c/6}(\log n)^2(1+o_p(1)) \to 0
\eeaa
 in probability as $n\to\infty$ by (\ref{donkey_elephant}) again. We then get the second conclusion of the lemma.
\end{proof}
Finally we are ready to prove Theorem \ref{vase_flower}.
\begin{proof}[Proof of Theorem \ref{vase_flower}]
Let $a$ be as in (\ref{learn}). Set
\beaa
& & U_n=\frac{\pi}{ \sqrt{6n} } m - \log \frac{ \sqrt{6n} }{\pi};\\
& & V_n=a-n^{-3/2}\sum_{j=1}^m k_j^2;\ \ \  W_n=\frac{a}{2}-n^{-3/2}\sum_{j=1}^m jk_j.
\eeaa
By (\ref{seabed}) and Lemma \ref{happy_uniform}, $U_n$ converges weakly to cdf $F_{\mu}(v) = e^{-e^{-v}}$ as $n\to \infty$, and both $V_n$ and $W_n$ converge to $0$ in probability. Solving $m$, $\sum_{j=1}^m k_j^2$  and  $\sum_{j=1}^m jk_j$ in terms of $U_n$, $V_n$ and  $W_n$, respectively, and substituting them for the corresponding terms of  $\lambda_{\kappa}$ in Lemma \ref{vase_flower}, we get
\beaa
\lambda_{\kappa}
& = & -\frac{\alpha}{2}n +nm+\sum_{j=1}^m (\frac{\alpha}{2}k_j-j)k_j\\
& = & -\frac{\alpha}{2}n +n\big(U_n+ \log \frac{ \sqrt{6n} }{\pi}\big)\cdot \frac{\sqrt{6n}}{\pi} + \frac{\alpha}{2}(a-V_n)n^{3/2}- (\frac{a}{2} - W_n)n^{3/2}.
\eeaa
Therefore,
\bea\lbl{fruit_sky}
c\frac{\lambda_{\kappa}}{n^{3/2}}-\log \frac{\sqrt{n}}{c}=U_n+ \big(\frac{\alpha-1}{2}\big)ac -\frac{c\alpha}{2}V_n + cW_n + o(1)
\eea
as $n\to\infty$. We finally evaluate $a$  in (\ref{learn}). Indeed, by (\ref{little_brother}), the Taylor expansion and integration by parts,
\beaa
(ac)\cdot c^2
&= & \int_0^1\frac{\log^2 (1-t)}{t}\,dt\\
& = & 2 \int_{0}^1 \frac{1}{t} \log t \log (1-t)\,dt\\
& = & -2\int_0^1 \frac{1}{t} \log t \sum_{n=1}^\infty \frac{t^n}{n}\,dt
 =  -2\sum_{n=1}^{\infty} \frac{1}{n} \int_0^1 t^{n-1} \log t\,dt\\
& = & 2\sum_{n=1}^\infty \frac{1}{n^3} = 2\zeta(3).
\eeaa
This and (\ref{fruit_sky}) prove the theorem  by the Slutsky lemma.
\end{proof}

\subsection{Proof of Theorems \ref{difficult_easy}}\label{Proof_difficult_easy}
\begin{proof}[Proof of Theorem \ref{difficult_easy}] For a partition $\kappa$ and its conjugate $\kappa'$, Frobenius (1900) shows that
$$\frac{a(\kappa') - a(\kappa)}{\binom{n}{2}} = \frac{\chi^{\kappa}_{(2,1^{n-2})}}{\dim(\kappa)},$$
where $\chi^{\kappa}_{(2,1^{n-2})}$ is the value of $\chi^{\kappa}$, the irreducible character of $\mathcal{S}_n$ associated to $\kappa$, on the conjugacy class indexed by $(2,1^{n-2}) \vdash n$.

By Theorem 6.1 from Ivanov and Olshanski (2001) for the special case
\beaa
p_2^{\#^{(n)}}(\kappa): = n(n-1) \frac{\chi^{\kappa}_{(2,1^{n-2})}}{\dim(\kappa)}
\eeaa
or Theorem 1.2 from Fulman (2004), we have
\beaa
\frac{a(\kappa')-a(\kappa)}{n} \to N\big(0, \frac{1}{2}\big)
\eeaa
weakly as $n\to\infty$. It is known from  Baik {\it et al}. (1999), Borodin {\it et al}. (2000),  Johannson (2001) and Okounkov (2000) that
\bea\lbl{green:white}
\frac{k_1-2\sqrt{n}}{n^{1/6}} \to F_2\ \ \mbox{and}\ \ \frac{m-2\sqrt{n}}{n^{1/6}} \to F_2
\eea
weakly as $n\to\infty$, where $F_2$ is as in (\ref{tai}). The $k_1$ and $m$ have the same limiting distribution in \eqref{green:white}, since $k_1$ and $m$ are duals under transposition, and the distribution stays the same under transposition.
Therefore, by using (\ref{Jingle}) for the case $\alpha =1$,
\begin{equation*}
\begin{split}
\frac{\lambda_{\kappa} - 2n^{3/2}}{n^{7/6}} &= \frac{n(m-1) + a(\kappa')-a(\kappa) - 2n^{3/2}}{n^{7/6}}\\
&=\frac{m-2\sqrt{n}}{n^{1/6}} - n^{-1/6} + \frac{a(\kappa')-a(\kappa)}{n^{7/6}}
\end{split}
\end{equation*}
converges weakly to $F_2$ as $n\to\infty$, where $F_2$ is as in (\ref{tai}).
\end{proof}

\subsection{Proof of Theorem \ref{thm:LLLplan}}\lbl{non_stop_thousand}
The proof of Theorem \ref{thm:LLLplan} is involved. The reason is that, when $\alpha=1$, the term  $a(\kappa')-a(\kappa)$ is negligible as shown in the proof of Theorem \ref{difficult_easy} . When $\alpha \ne 1$, reviewing  (\ref{Jingle}), it will be seen next that the term $a(\kappa')\alpha-a(\kappa)$, under the Plancherel measure,  is much larger and contributes to $\lambda_{\kappa}$ essentially.

\begin{figure}[!ht]
 \begin{center}
   \includegraphics[width=7cm]{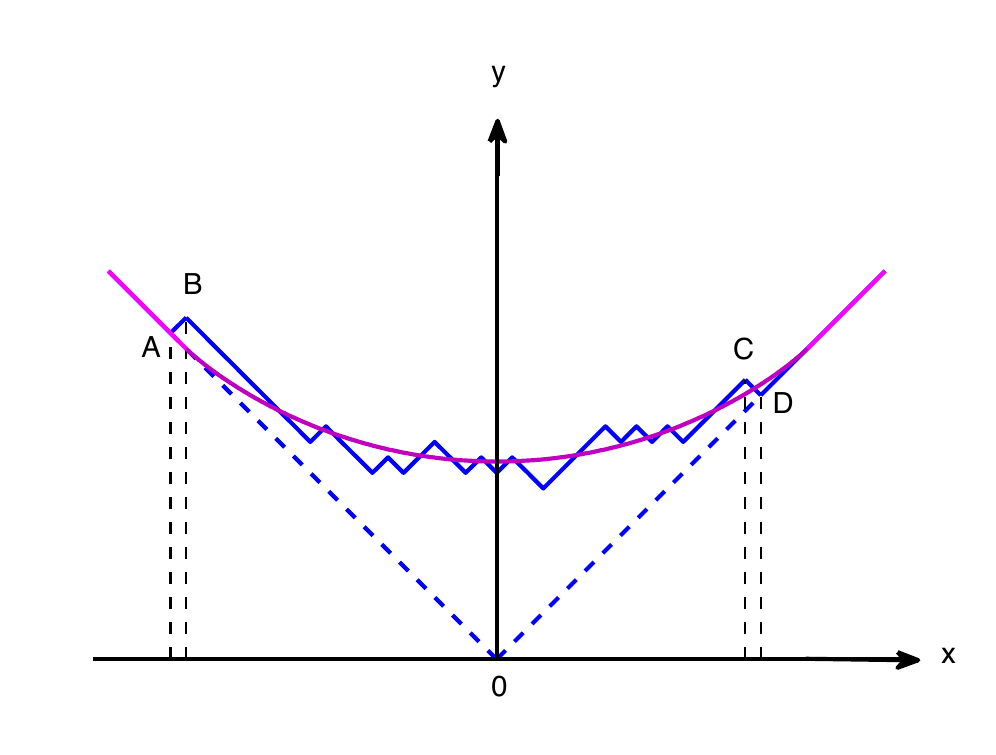}
   \caption{The ``zig-zag" curve is the graph of $y=g_{\kappa}(x)$ and the smooth one is $y=\Omega(x)$. Facts:  $A=(-\frac{m}{\sqrt{n}}, \frac{m}{\sqrt{n}})$,  $D=(\frac{k_1}{\sqrt{n}}, \frac{k_1}{\sqrt{n}})$, and $g_{\kappa}(x)=\Omega(x)$ if $x\geq \max\{\frac{k_1}{\sqrt{n}}, 2\}$ or $x\leq -\max\{\frac{m}{\sqrt{n}}, 2\}$.}
   \label{fig:kerov}
\end{center}
\end{figure}

 We first recall some notation. Let $\kappa=(k_1, k_2, \cdots, k_m)$ with $k_m\geq 1$ be a partition of $n$.
Set
coordinates $u$ and $v$ by
\bea\lbl{rotate_135}
u=\frac{j-i}{\sqrt{n}}\ \ \mbox{and}\ \ v=\frac{i+j}{\sqrt{n}}.
\eea
This is the same as flipping and then rotating the diagram of $\kappa$ counter clockwise $135^{\circ}$  and scaling  it by
a factor of $\sqrt{n/2}$ so that the area of the new diagram is equal to $2$. Denote by $g_{\kappa}(x)$ the boundary curve of the new Young diagram. See such a graph as in Figure \ref{fig:kerov}. It follows that  $g_{\kappa}(x)$ is a Lipschitz function for all $x\in \mathbb{R}.$

For a piecewise smooth and compactly supported function $h(x)$ defined on $\mathbb{R}$, its Sobolev norm is given by
\bea\lbl{Sobolev}
\|h\|_{\theta}^2=\iint_{\mathbb{R}^2}\Big(\frac{h(s)-h(t)}{s-t}\Big)^2\,dsdt.
\eea

Let $\kappa=(k_1, k_2, \cdots, k_m)$ with $k_m\geq 1$ be a partition of $n$. For $x\geq 0$, the notation $\lceil x\rceil$ stands for the least positive integer greater than or equal to $x$.
Define 
\bea\lbl{Logan}
f_{\kappa}(x)=\frac{1}{\sqrt{n}}k_{\lceil \sqrt{n}x \rceil}, \ \ \ \ \ x\geq 0.
\eea

Recall from \eqref{jilin} that $\Omega(x) = \frac{2}{\pi}(x\arcsin\frac{x}{2} + \sqrt{4-x^2})$ for $|x|\leq 2$ and $|x|$ otherwise. The following is a large deviation bound on a rare event under the Plancherel measure.
\begin{lemma}\lbl{season_mate}  Define $L_{\kappa}(x)=\frac{1}{2}g_{\kappa}(2x)$ and $\bar{\Omega}(x)=\frac{1}{2}\Omega(2x)$ for  $x\in\mathbb{R}$. Then for any $n\geq 2$ and any subset $\mathcal{F}$ of the partitions of $n,$ $$P(\mathcal{F}) \leq  \exp\big\{C\sqrt{n}-n  \inf_{\kappa\in \mathcal{F}}I(\kappa)\big\},$$
where $C>0$ is an absolute constant  and
\bea\lbl{rate_function_5}
I(\kappa)=\|L_{\kappa}-\bar{\Omega}\|_{\theta}^2\, -\, 4\int_{|s|>1}(L_{\kappa}(s)-\bar{\Omega}(s))\cosh^{-1} |s|\,ds.
\eea
\end{lemma}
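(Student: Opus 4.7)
The plan is to adapt the Logan--Shepp / Vershik--Kerov large deviation analysis for the Plancherel measure, then combine it with the Hardy--Ramanujan counting bound $p(n) \le e^{C\sqrt{n}}$. I will first bound $P(\kappa)$ pointwise by $e^{-nI(\kappa)+O(\sqrt{n})}$ uniformly in $\kappa \vdash n$, and then use
$$P(\mathcal{F}) \le |\mathcal{F}|\cdot \max_{\kappa\in\mathcal{F}} P(\kappa) \le p(n)\cdot \max_{\kappa\in\mathcal{F}} P(\kappa)$$
to absorb the factor $p(n)$ into the constant $e^{C\sqrt{n}}$.

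For the pointwise bound, the starting point is the hook length formula, which yields
$$P(\kappa) \,=\, \frac{(\dim\kappa)^2}{n!} \,=\, \frac{n!}{\prod_{(i,j)\in\kappa} h_\kappa(i,j)^2}, \qquad \log P(\kappa) \,=\, \log n!\, -\, 2\sum_{(i,j)\in\kappa}\log h_\kappa(i,j).$$
Stirling gives $\log n! = n\log n - n + O(\log n)$. The main task is to approximate the hook sum by a double integral. In the rotated coordinates (\ref{rotate_135}), for a cell $(i,j)$ the quantities $k_i - j$ and $k_j' - i$ are, up to $O(1)$, the horizontal distances from $(u,v)$ to the right and left arms of the curve $y = g_\kappa(x)$ at height $v$, so that $h_\kappa(i,j)/\sqrt{n}$ is well approximated by $t-s$, where $s<t$ are the two $x$-values with $g_\kappa(s)-s = g_\kappa(t)+t = $ const (the two endpoints of the horizontal slice through the cell). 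A Riemann sum approximation (the cells form a lattice with spacing $1/\sqrt{n}$ in the scaled picture and their total area is $2$) converts the double sum into $n$ times a double integral over the region under $g_\kappa$ of $\log(t-s)$, plus an error of order $\sqrt{n}$ arising from the boundary cells and from replacing $\log h_\kappa$ by its continuous analogue. Combined with $\log n!$, this gives
$$\log P(\kappa) \,=\, -\,n\,\widehat{J}(g_\kappa)\, +\, O(\sqrt{n})$$
for a functional $\widehat{J}$ whose explicit form is a regularized double-log integral of the profile.

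The final, and most delicate, step is to rewrite $\widehat{J}(g_\kappa) - \widehat{J}(\Omega)$ in the advertised form. Two integrations by parts, using $g_\kappa' \in \{\pm 1\}$ a.e.\ and the same for $\Omega$ outside $[-2,2]$, convert the double-log integral of the difference $g_\kappa-\Omega$ into a double integral of $(g_\kappa'-\Omega')(s)(g_\kappa'-\Omega')(t)\log|s-t|$, which up to a factor is the Sobolev seminorm $\|g_\kappa-\Omega\|_\theta^2$ defined in (\ref{Sobolev}); the boundary terms produced by the integrations by parts, after using $\Omega(x)=|x|$ for $|x|\ge 2$, collapse into the $\cosh^{-1}|x|$ correction in (\ref{rate_function_5}). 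Rescaling by the change of variables $s\mapsto 2s$ (which is precisely the passage from $(g_\kappa,\Omega)$ to $(L_\kappa,\bar\Omega)$) converts this into exactly
$$I(\kappa) \,=\, \|L_\kappa-\bar\Omega\|_\theta^2 \,-\, 4\int_{|s|>1}(L_\kappa(s)-\bar\Omega(s))\cosh^{-1}|s|\,ds,$$
and since $\widehat{J}(\Omega)=0$ (which is where the Logan--Shepp--Vershik--Kerov limit shape comes from) we get $\log P(\kappa) \le -nI(\kappa) + C\sqrt{n}$. The union bound over $\mathcal{F}$ then yields the lemma.

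The main obstacle is the bookkeeping in the final step: justifying that the error in the Riemann sum approximation of $\sum \log h_\kappa(i,j)$ is genuinely $O(\sqrt{n})$ uniformly in $\kappa$ (the singularities of $\log$ near the diagonal and near the rightmost/leftmost cells need care), and then carrying through the integrations by parts cleanly to identify the precise rate function $I(\kappa)$ with both the Sobolev seminorm and the correct $\cosh^{-1}$ boundary correction. Everything else is either a direct estimate or an invocation of Stirling and Hardy--Ramanujan.
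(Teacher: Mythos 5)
Your proposal follows exactly the same three-step architecture as the paper's proof: (i) a pointwise large-deviation bound $P(\kappa)\le \exp\{-n\,\theta_{f_\kappa}+O(\sqrt n)\}$, (ii) a union bound with the Hardy--Ramanujan estimate $p(n)\sim \frac{1}{4\sqrt3\,n}e^{2\pi\sqrt{n/6}}$ to absorb the cardinality of $\mathcal F$ into $e^{C\sqrt n}$, and (iii) an identification of the rate function with the form $\|L_\kappa-\bar\Omega\|_\theta^2-4\int_{|s|>1}(L_\kappa-\bar\Omega)\cosh^{-1}|s|\,ds$ via integration by parts. The difference is purely one of sourcing: where you sketch re-derivations of (i) and (iii) from the hook-length formula, Stirling, Riemann-sum control, and a double integration by parts, the paper cites them as black boxes --- equation (1.8) of Logan--Shepp (1977) for the pointwise bound, and Lemmas 2--4 of Kerov (2003) for the rate-function identity. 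The ``main obstacle'' you flag at the end (uniform $O(\sqrt n)$ error control in the Riemann-sum approximation of $\sum\log h_\kappa$, and the bookkeeping of boundary terms in the integration by parts to land exactly on the $\cosh^{-1}$ correction) is genuinely substantial work; it is precisely the content of those two references and is not resolved by your sketch. So your argument is correct in outline and would become complete by either citing those results or by executing the deferred computations in full; as written, those steps remain open.
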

\begin{proof}[Proof of Lemma \ref{season_mate}]
For any non-increasing function $F(x)$ defined on $(0, \infty)$ such that  $\int_{\mathbb{R}}F(x)\,dx=1$, define
\beaa
\theta_F=1+2\int_0^{\infty}\int_0^{F(x)}\log \big(F(x)+F^{-1}(y)-x-y\big)\,dy\,dx
\eeaa
where $F^{-1}(y)=\inf\{x\in \mathbb{R};\, F(x)\leq y\}$.
According to (1.8) from Logan and Shepp (1977), $P(\kappa)\leq C \sqrt{n}\cdot\exp\big\{-n \theta_{f_{\kappa}}\big\}$
for all $n\geq 2$, where $C$ is a numerical constant and $f_{\kappa}$ is defined as in (\ref{Logan}). By the Euler-Hardy-Ramanujan formula, $p(n)$, the total number of partitions of $n$, satisfies that
\bea\lbl{Raman}
p(n)\sim \frac{1}{4\sqrt{3}\,n}\cdot \exp\Big\{\frac{2\pi}{\sqrt{6}}\sqrt{n}\Big\}
\eea
as $n\to\infty$. Thus, for any  subset $\mathcal{F}$ of the partitions of $n,$ we have
\beaa
P(\mathcal{F}) &\leq & C p(n)\cdot\sqrt{n}\exp\Big\{-n  \inf_{\kappa\in \mathcal{F}}\theta_{f_{\kappa}}\Big\} \nonumber\\
& \leq & C' \exp\Big\{C'\sqrt{n}-n  \inf_{\kappa\in \mathcal{F}}\theta_{f_{\kappa}}\Big\}
\eeaa
where $C'$ is another numerical constant independent of $n$.  For the curve $y=f_\kappa(x)$ in \eqref{Logan}, consider the following transform
\beaa
X=\frac{x-y}{2}\ \ \mbox{and}\ \ Y=\frac{x+y}{2}.
\eeaa
We name the new curve by $y=L_{f_\kappa}(x)$. By (\ref{rotate_135})  and the definition  $L_{\kappa}(x)=\frac{1}{2}g_{\kappa}(2x)$, we have $L_{f_{\kappa}}(x)=L_{\kappa}(-x)$ for all $x\in \mathbb{R}.$ By Lemmas 2, 3 and 4  from Kerov (2003),
\beaa
\theta_{f_{\kappa}} &=& \|L_{f_{\kappa}}-\bar{\Omega}\|_{\theta}^2 + 4\int_{|s|>1}(L_{f_{\kappa}}(s)-\bar{\Omega}(s))\cosh^{-1} |s|\,ds\\
& = & \|L_{\kappa}-\bar{\Omega}\|_{\theta}^2 -4\int_{|s|>1}(L_{\kappa}(s)-\bar{\Omega}(s))\cosh^{-1} |s|\,ds
\eeaa
considering $\Omega(x)$ is an even function. We then get the desired result.
\end{proof}

The next lemma says that the second term on the right hand side of (\ref{rate_function_5}) is small for almost all partitions.
\begin{lemma}\lbl{rate_function} Let $L_{\kappa}(x)$ and $\bar{\Omega}(x)$ be as in Lemma \ref{season_mate}. Let $\{t_n>0;\, n\geq 1\}$ satisfy $t_n\to\infty$ and $t_n=o(n^{1/3})$ as $n\to\infty.$ Set $H_n=\{\kappa=(k_1, \cdots, k_m)\vdash n;\, k_m\geq 1,\, 2\sqrt{n}-t_nn^{1/6} \leq m,\, k_1 \leq 2\sqrt{n}+t_nn^{1/6}\}$. Then, as $n\to\infty$, $P(H_n)\to 1$ and
\bea\lbl{five_words}
\int_{|s|>1}(L_{\kappa}(s)-\bar{\Omega}(s))\cosh^{-1} |s|\,ds\cdot\, I_{H_n}=O(n^{-2/3}t_n^2).
\eea
\end{lemma}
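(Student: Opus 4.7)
The plan is to establish the two claims of Lemma \ref{rate_function} separately. For $P(H_n)\to 1$, both tail events $\{k_1 > 2\sqrt{n}+t_n n^{1/6}\}$ and $\{m < 2\sqrt{n}-t_n n^{1/6}\}$ have probability $o(1)$ by the Tracy--Widom limits recorded in (\ref{green:white}) combined with $t_n\to\infty$, so their complementary intersection tends to $1$.

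For the integral bound, the key structural observation is that on $\{|s|>1\}$ one has $\bar{\Omega}(s)=|s|$, while $L_\kappa(s)=|s|$ whenever $s\ge a:=k_1/(2\sqrt{n})$ or $s\le -b:=-m/(2\sqrt{n})$. Hence the integrand is supported on the (possibly empty) intervals $(1,a)\cup(-b,-1)$. On $H_n$ we have $a-1\le \tfrac12\varepsilon_n$ with $\varepsilon_n:=t_n n^{-1/3}$; since the event $\{m\le 2\sqrt{n}+t_n n^{1/6}\}$ also has probability tending to $1$ by transpose symmetry of the Plancherel measure, I may replace $H_n$ by the symmetric version on which $b-1\le\tfrac12\varepsilon_n$ as well, without affecting either conclusion of the lemma.

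The actual bound then follows from two elementary facts: (i) each boundary segment of $g_\kappa$ in the $(u,v)$-coordinates of (\ref{rotate_135}) has slope $\pm 1$, so $g_\kappa$ is $1$-Lipschitz, and hence $L_\kappa(x)=\tfrac12 g_\kappa(2x)$ is $1$-Lipschitz too; and (ii) $\cosh^{-1}(s)\le\sqrt{2(s-1)}$ for $s\ge 1$, which is immediate from $\cosh y \ge 1 + y^2/2$. Using $L_\kappa(a)=a$, (i) gives $0\le L_\kappa(s)-s\le 2(a-s)$ for $s\in(1,a)$, and combining with (ii),
\[
\int_{1}^{a}(L_\kappa(s)-s)\cosh^{-1}(s)\,ds \;\le\; 2\sqrt{2}\int_{1}^{a}(a-s)\sqrt{s-1}\,ds \;=\; \tfrac{8\sqrt{2}}{15}(a-1)^{5/2} \;=\; O(\varepsilon_n^{5/2}).
\]
The same estimate holds on $(-b,-1)$ after the substitution $s\mapsto -s$. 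Since $\varepsilon_n^{5/2}=(t_n n^{-1/3})^{1/2}\cdot t_n^2 n^{-2/3}=o(t_n^2 n^{-2/3})$ under $t_n=o(n^{1/3})$, the claimed $O(n^{-2/3}t_n^2)$ bound follows.

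The only delicate point I foresee is the asymmetric definition of $H_n$ (no upper bound on $m$, no lower bound on $k_1$), which is handled by intersecting with the transpose-symmetric events of probability $1-o(1)$. Everything else is local analysis of $L_\kappa$ near $s=\pm 1$ together with the single-variable calculus computation displayed above.
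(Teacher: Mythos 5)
Your proof is correct, and it follows the same overall structure as the paper's: first identify the effective support of the integrand in $|s|>1$ (a short interval to the right of $s=1$ controlled by $k_1$, a mirror interval to the left of $s=-1$ controlled by $m$), then use the $1$-Lipschitz property of $g_\kappa$ (equivalently $L_\kappa$), and finally estimate the integral. The one genuine difference is in how $\cosh^{-1}|s|$ is handled. The paper simply bounds $\cosh^{-1}(|x|/2)$ by a constant on the fixed interval $|x|\le 3$ and multiplies width by height, obtaining $O(\varepsilon_n^2)$ with $\varepsilon_n=t_n n^{-1/3}$, exactly matching the stated $O(n^{-2/3}t_n^2)$. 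You instead exploit the vanishing of $\cosh^{-1}$ at $|s|=1$ via $\cosh^{-1}(s)\le\sqrt{2(s-1)}$, and the clean single-variable calculation $\int_1^a(a-s)\sqrt{s-1}\,ds=\frac{4}{15}(a-1)^{5/2}$ yields the sharper $O(\varepsilon_n^{5/2})$, which you correctly observe is $o(n^{-2/3}t_n^2)$ under the hypothesis $t_n=o(n^{1/3})$. This is a modest but real improvement, and the rest (identification of the support, $L_\kappa(a)=a$, the bound $0\le L_\kappa(s)-s\le 2(a-s)$, the $P(H_n)\to 1$ argument from (\ref{green:white})) matches the paper.

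On the "asymmetric $H_n$" concern: the paper's condition "$2\sqrt{n}-t_nn^{1/6}\le m,\, k_1\le 2\sqrt{n}+t_nn^{1/6}$" is meant as a chained inequality, i.e.\ both $m$ and $k_1$ lie in the interval $[2\sqrt{n}-t_nn^{1/6},\, 2\sqrt{n}+t_nn^{1/6}]$ (this reading is confirmed by the proof's use of $C_n\le\sup_{-3\le x\le 3}\cosh^{-1}(|x|/2)$, which needs $m/\sqrt{n}\le 3$, and by the same notation in (\ref{hapiness})). You were right to flag that the literal asymmetric reading would make (\ref{five_words}) false (take $\kappa=(1^n)$), and your fix of intersecting with the transpose-symmetric event $\{m\le 2\sqrt{n}+t_nn^{1/6}\}$ — which has probability $\to 1$ since $m$ and $k_1$ are equidistributed under Plancherel — is exactly what the paper's definition already provides; it changes nothing downstream.
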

\begin{proof}[Proof of Lemma \ref{rate_function}] Since $m$ and $k_1$ have the same probability distribution under the Plancherel measure, by (\ref{green:white}),
$\lim_{n\to\infty}P(H_n)=1.$ Review the definitions of $L_{\kappa}$ and $\bar{\Omega}$ in Lemma \ref{season_mate}. Trivially,
\beaa
\mbox{LHS of } (\ref{five_words})=\frac{1}{4}\int_{|x|>2}(g_{\kappa}(x)-\Omega(x))\cosh^{-1} \frac{|x|}{2}\,dx\cdot\, I_{H_n}.
\eeaa
By definition, $g_{\kappa}(x)=\Omega(x)$ if $x\geq \frac{k_1}{\sqrt{n}}\vee 2$ or $x\leq -\big(\frac{m}{\sqrt{n}}\vee 2\big).$ It follows that
\bea
& & \mbox{LHS of } (\ref{five_words}) \nonumber\\
& \leq & C_n\cdot \Big[\int_2^{2+n^{-1/3}t_n}\big|g_{\kappa}(x)-\Omega(x)\big|\,dx+ \int^{-2}_{-2-n^{-1/3}t_n}\big|g_{\kappa}(x)-\Omega(x)\big|\,dx\Big] \lbl{strive}
\eea
where
\beaa
C_n &= & \sup\Big\{\cosh^{-1}\frac{|x|}{2};\, -(\frac{m}{\sqrt{n}}\vee 2) \leq x \leq \frac{k_1}{\sqrt{n}} \vee 2\Big\}\cdot\, I_{H_n}\\
& \leq & \sup\Big\{\cosh^{-1}\frac{|x|}{2};\, -3\leq x \leq 3\Big\}<\infty
\eeaa
as $n$ is sufficiently large. Now
\bea
& & \int_2^{2+n^{-1/3}t_n}\big|g_{\kappa}(x)-\Omega(x)\big|\,dx\cdot\, I_{H_n} \nonumber\\
& \leq & n^{-1/3}t_n\cdot \max\Big\{\big|g_{\kappa}(x)-\Omega(x)\big|;\, 2\leq x \leq 2+n^{-1/3}t_n\Big\}\cdot\, I_{H_n}.\lbl{Lie}
\eea
By the triangle inequality, the Liptschitz property of $g_{\kappa}(x)$ and the fact $\Omega(x)=|x|$ for $|x|\geq 2$, we see
\beaa
 \big|g_{\kappa}(x)-\Omega(x)\big|
&\leq & \big|g_{\kappa}(x)-g_{\kappa}(2+2n^{-1/3}t_n)\big| + \big|g_{\kappa}(2+2n^{-1/3}t_n)-\Omega(x)\big|\\
& \leq & \big|x-(2+2n^{-1/3}t_n)\big|+ \big|2+2n^{-1/3}t_n-x\big|\\
& \leq & 2\big[(2+2n^{-1/3}t_n)-x\big] \leq 4n^{-1/3}t_n
\eeaa
for  $2\leq x \leq 2+n^{-1/3}t_n$ and $\kappa \in H_n$ whence $g_{\kappa}(2+2n^{-1/3}t_n) = 2+2n^{-1/3}t_n$. This and (\ref{Lie}) imply that the first integral in (\ref{strive}) is dominated by $O(n^{-2/3}t_n^2)$. By the same argument, the second integral in (\ref{strive}) has the same upper bound. Then the conclusion follows.
\end{proof}

To prove Lemma \ref{wisdom}, we need to  examine $g_{\kappa}(x)$ more closely.  For $(k_1, k_2, \ldots, k_m) \vdash n$, assume
\bea\lbl{give_word}
& & k_1=\cdots =k_{l_1}> k_{l_1+1}=\cdots=k_{l_2}>\cdots > k_{l_{p-1}+1}=\cdots=k_m \geq 1 \ \ \mbox{with}\nonumber\\
& & 0=l_0<l_1<\cdots < l_p=m
\eea
for some $p\geq 1.$  To ease notation, let $\bar{k}_i=k_{l_i}$ for $i=1,2,\cdots, p$ and $\bar{k}_{p+1}=0$. So the partition $\kappa$ is determined by $\{\bar{k}_i, l_i\}$'s.
It is easy to see that the corners (see, e.g., points $A, B, C,D$ in Figure \ref{fig:kerov}) sitting on the curve of $y=g_{\kappa}(x)$ listed  from the leftmost to the rightmost in order are
\beaa
\big(-\frac{l_p}{\sqrt{n}}, \frac{l_p}{\sqrt{n}}\big),  \cdots,\big(\frac{\bar{k}_i-l_i}{\sqrt{n}},\frac{\bar{k}_i+l_i}{\sqrt{n}}\big),  \big(\frac{\bar{k}_{i+1}-l_i}{\sqrt{n}}, \frac{\bar{k}_{i+1}+l_i}{\sqrt{n}}\,\big), \cdots, \big(\frac{\bar{k}_1}{\sqrt{n}}, \frac{\bar{k}_1}{\sqrt{n}}\,\big)
\eeaa
for $i=1, 2, \cdots, p$.
As a consequence,
\bea\lbl{Russian}
g_{\kappa}(x)=\begin{cases}
\frac{2\bar{k}_{i}}{\sqrt{n}} -x, \ \ \text{if $\frac{\bar{k}_{i}-l_i}{\sqrt{n}} \leq x \leq \frac{\bar{k}_{i}-l_{i-1}}{\sqrt{n}}$;}\\
\frac{2l_{i}}{\sqrt{n}} +x, \ \ \text{if $\frac{\bar{k}_{i+1}-l_{i}}{\sqrt{n}} \leq x \leq \frac{\bar{k}_{i}-l_{i}}{\sqrt{n}}$}
\\
\end{cases}
\eea
for all $1\leq i \leq p$, and $g_{\kappa}(x)=|x|$ for other $x\in \mathbb{R}$. In particular, taking $i=1$ and $p$, respectively, we get
\beaa
g_{\kappa}(x)=\begin{cases}
\frac{2k_1}{\sqrt{n}} - x, \ \ \text{if $\frac{k_1-l_1}{\sqrt{n}} \leq x \leq \frac{k_1}{\sqrt{n}}$};\\
\frac{2m}{\sqrt{n}} + x, \ \ \text{if $-\frac{m}{\sqrt{n}} \leq x \leq \frac{k_m-m}{\sqrt{n}}$}
\end{cases}
\eeaa
for $l_0=0$, $l_p=m$, $\bar{k}_1=k_1$,  and $\bar{k}_{p}=k_m$.


We need to estimate  $\sum_{i=1}^{m}ik_i$ in the proof of Theorem \ref{thm:LLLplan}. The following lemma links it  to  $g_{\kappa}(x)$.  We will  then be able to evaluate the sum through Kerov's central limit theorem (Ivanov and Olshanski, 2001).

\begin{lemma}\lbl{wisdom} Let $\kappa=(k_1, k_2, \cdots, k_m) \vdash n$ with $k_m\geq 1$ and $g_{\kappa}(x)$ be as in (\ref{Russian}). Then
\beaa
\sum_{i=1}^{m}ik_i
&= & \frac{1}{8}n^{3/2}\int_{-m/\sqrt{n}}^{k_1/\sqrt{n}}(g_{\kappa}(x)-x)^2\,dx -\frac{1}{6}m^3 + \frac{1}{2}n.
\eeaa
\end{lemma}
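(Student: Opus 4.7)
The plan is a direct, elementary calculation using the explicit piecewise description of $g_\kappa$ given in \eqref{Russian}: split the integral into the horizontal ($+1$-slope) and diagonal ($-1$-slope) pieces, evaluate each, and reconcile with $\sum_{i=1}^m i k_i$ via Abel summation. Let $\{\bar k_i,l_i\}_{i=1}^{p}$ be the corner data from \eqref{give_word}, with the conventions $l_0=0$, $l_p=m$, $\bar k_{p+1}=0$. On the $i$-th horizontal piece, $g_\kappa(x)-x=2l_i/\sqrt n$ is constant on an interval of length $(\bar k_i-\bar k_{i+1})/\sqrt n$; on the $i$-th diagonal piece, $g_\kappa(x)-x=2\bar k_i/\sqrt n-2x$ ranges linearly from $2l_{i-1}/\sqrt n$ to $2l_i/\sqrt n$. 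These are the only two shapes, and together they cover $[-m/\sqrt n,k_1/\sqrt n]$.

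First I would evaluate the diagonal contributions. Substituting $u=2\bar k_i/\sqrt n-2x$ on each diagonal piece gives
\[
\int_{(\bar k_i-l_i)/\sqrt n}^{(\bar k_i-l_{i-1})/\sqrt n}(g_\kappa(x)-x)^2\,dx=\tfrac12\int_{2l_{i-1}/\sqrt n}^{2l_i/\sqrt n}u^2\,du=\frac{4(l_i^3-l_{i-1}^3)}{3n^{3/2}},
\]
and summing over $i=1,\dots,p$ telescopes to $4m^3/(3n^{3/2})$. Next I would handle the horizontal contributions, which sum to $(4/n^{3/2})\sum_{i=1}^{p}l_i^{2}(\bar k_i-\bar k_{i+1})$.

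The main task is then to identify the horizontal sum with $\sum_{j=1}^m j k_j$. Using Abel summation together with $l_0=0$ and $\bar k_{p+1}=0$,
\[
\sum_{i=1}^{p}l_i^2(\bar k_i-\bar k_{i+1})=\sum_{i=1}^{p}(l_i^2-l_{i-1}^2)\bar k_i.
\]
On the other hand, grouping the sum $\sum_j j k_j$ according to the constant blocks $k_j=\bar k_i$ for $l_{i-1}<j\le l_i$ gives
\[
\sum_{j=1}^{m}j k_j=\sum_{i=1}^{p}\bar k_i\cdot\frac{(l_i-l_{i-1})(l_i+l_{i-1}+1)}{2}=\tfrac12\sum_{i=1}^{p}\bar k_i(l_i^2-l_{i-1}^2)+\tfrac{n}{2},
\]
since $\sum_i \bar k_i(l_i-l_{i-1})=\sum_j k_j=n$. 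Hence $\sum_{i=1}^{p}l_i^{2}(\bar k_i-\bar k_{i+1})=2\sum_{j}jk_j-n$.

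Combining the diagonal and horizontal contributions gives
\[
\int_{-m/\sqrt n}^{k_1/\sqrt n}(g_\kappa(x)-x)^2\,dx=\frac{4m^3}{3n^{3/2}}+\frac{8\sum_j j k_j-4n}{n^{3/2}},
\]
and multiplying by $n^{3/2}/8$ and solving for $\sum_j j k_j$ yields the stated identity. There is no real obstacle here; the only delicate point is keeping track of the telescoping boundary terms $l_0=0$, $l_p=m$, $\bar k_{p+1}=0$ when passing between the corner parametrization $\{\bar k_i,l_i\}$ and the partition parametrization $\{k_j\}$, which is exactly what makes the Abel summation clean.
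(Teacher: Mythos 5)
Your proposal is correct and takes essentially the same approach as the paper: both split the integral over the slope-$(-1)$ and slope-$(+1)$ pieces of $g_\kappa$, obtaining $4m^3/(3n^{3/2})$ from the former and $\frac{4}{n^{3/2}}\sum_i l_i^2(\bar k_i-\bar k_{i+1})$ from the latter, and then reduce the sum to $2\sum_j jk_j - n$ by an elementary Abel-summation argument. The only cosmetic difference is that you Abel-sum directly in the corner parametrization $\{\bar k_i,l_i\}$ while the paper routes through the differences $\Delta_j=k_j-k_{j+1}$; the algebra is identical.
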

\begin{proof}[Proof of Lemma \ref{wisdom}]
Easily,
\bea
& & \int_{-m/\sqrt{n}}^{k_1/\sqrt{n}}(g_{\kappa}(x)-x)^2\,dx \nonumber\\
& = & \sum_{i=1}^p\int_{(\bar{k}_i-l_{i})/\sqrt{n}}^{(\bar{k}_i-l_{i-1})/\sqrt{n}}(g_{\kappa}(x)-x)^2\,dx  +
\sum_{i=1}^{p}\int_{\frac{\bar{k}_{i+1}-l_{i}}{\sqrt{n}}}^{\frac{\bar{k}_{i}-l_{i}}{\sqrt{n}}}(g_{\kappa}(x)-x)^2\,dx. \lbl{long_sum}
\eea
By (\ref{Russian}), the slopes of $g_{\kappa}(x)$ in the first sum of (\ref{long_sum}) are equal to $-1$. Hence, it   is equal to
\beaa
 4\sum_{i=1}^p\int_{(\bar{k}_i-l_{i})/\sqrt{n}}^{(\bar{k}_i-l_{i-1})/\sqrt{n}}\big(\frac{\bar{k}_i}{\sqrt{n}}-x\big)^2\,dx
& = & 4\sum_{i=1}^p\int_{l_{i-1}/\sqrt{n}}^{l_{i}/\sqrt{n}}t^2\,dt\\
& = & 4\int_{l_{0}/\sqrt{n}}^{l_{p}/\sqrt{n}}t^2\,dt=\frac{4m^3}{3n^{3/2}}
\eeaa
because $l_0=0$ and $l_p=m.$ In the second sum in (\ref{long_sum}), $g_{\kappa}(x)$ has slopes equal to $1$. As a consequence, it is identical to
\beaa
\sum_{i=1}^{p}\int_{\frac{\bar{k}_{i+1}-l_{i}}{\sqrt{n}}}^{\frac{\bar{k}_{i}-l_{i}}{\sqrt{n}}}\frac{4l_i^2}{n}\,dx
=\frac{4}{n^{3/2}}\sum_{i=1}^{p}(\bar{k}_i-\bar{k}_{i+1})l_i^2.
\eeaa
In summary,
\bea\lbl{father_home}
\int_{-m/\sqrt{n}}^{k_1/\sqrt{n}}(g_{\kappa}(x)-x)^2\,dx =\frac{4m^3}{3n^{3/2}} + \frac{4}{n^{3/2}}\sum_{i=1}^{p}(\bar{k}_i-\bar{k}_{i+1})l_i^2.
\eea
Now, let us evaluate the sum. Set $k_{j}=0$ for $j>m$ for convenience  and $\Delta_i=k_i-k_{i+1}$ for $i=1,2,\cdots.$ Then $\Delta_i=0$ unless $i=l_1, \cdots, l_p$.
Observe
\beaa
\sum_{i=1}^{\infty}ik_i=\sum_{i=1}^{\infty}i\sum_{j=i}^{\infty}\Delta_j&=&\sum_{j=1}^{\infty}\Delta_j\sum_{i=1}^{j}i\\
& = & \frac{1}{2}\sum_{j=1}^{\infty}j^2\Delta_j+ \frac{1}{2}\sum_{j=1}^{\infty}j\Delta_j.
\eeaa
Furthermore,
\beaa
\sum_{j=1}^{\infty}j\Delta_j= \sum_{j=1}^{\infty}\sum_{i=1}^j\Delta_j=\sum_{i=1}^{\infty}\sum_{j=i}^{\infty}\Delta_j=\sum_{i=1}^{\infty}k_i=n.
\eeaa
The above two assertions say that $\sum_{j=1}^{\infty}j^2\Delta_j=-n+2\sum_{i=1}^{\infty}ik_i$. Now,
\beaa
\sum_{j=1}^{\infty}j^2\Delta_j=\sum_{i=1}^{p}l_i^2(k_{l_i}-k_{l_i+1})=\sum_{i=1}^{p}l_i^2(\bar{k}_i-\bar{k}_{i+1})
\eeaa
by the fact $k_{l_i+1}=k_{l_{i+1}}=\bar{k}_{i+1}$ from (\ref{give_word}). This together with (\ref{father_home}) shows
\beaa
\int_{-m/\sqrt{n}}^{k_1/\sqrt{n}}(g_{\kappa}(x)-x)^2\,dx=\frac{4m^3}{3n^{3/2}}+ \frac{4}{n^{3/2}}\big(-n+2\sum_{i=1}^{\infty}ik_i \big).
\eeaa
Solve this equation to get
\beaa
\sum_{i=1}^{\infty}ik_i=\frac{1}{8}n^{3/2}\int_{-m/\sqrt{n}}^{k_1/\sqrt{n}}(g_{\kappa}(x)-x)^2\,dx -\frac{1}{6}m^3 + \frac{1}{2}n.
\eeaa
The proof is complete.
\end{proof}


Under the Plancherel measure, both $m/\sqrt{n}$ and $k_1/\sqrt{n}$ go to $2$ in probability. In lieu of  this fact, the next lemma writes the integral in Lemma \ref{wisdom} in a slightly cleaner form. The main tools of the proof are the Tracy-Widom law of the largest part of a random partition, the large deviations and Kerov's cental limit theorem.

\begin{lemma}\lbl{pen_name} Let $g_{\kappa}(x)$ be as in (\ref{Russian}) and set
\beaa
Z_n=\int_{-m/\sqrt{n}}^{k_1/\sqrt{n}}(g_{\kappa}(x)-x)^2\,dx-\int_{-2}^{2}(\Omega(x)-x)^2\,dx
\eeaa
where $\Omega(x)$ is as in (\ref{jilin}). Then, for any $\{a_n>0;\, n\geq 1\}$ with $\lim_{n\to\infty}a_n=\infty$, we have  $$\frac{n^{1/4}}{a_n} Z_n \to 0$$ in probability as $n\to\infty$.
\end{lemma}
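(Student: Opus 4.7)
The strategy is to express $Z_n$ in terms of the fluctuation $h := g_\kappa - \Omega$ and then use Lemma \ref{season_mate} together with a Poincar\'e-type inequality to control $\|h\|_{L^2}$.

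First, I would restrict to the event $H_n$ of Lemma \ref{rate_function}, on which both $m/\sqrt n$ and $k_1/\sqrt n$ lie within $t_n n^{-1/3}$ of $2$. The two integration domains $[-m/\sqrt n, k_1/\sqrt n]$ and $[-2,2]$ differ by intervals of length $O_p(n^{-1/3})$, and using the piecewise linear formula (\ref{Russian}) for $g_\kappa$, a direct computation shows this domain-mismatch contributes at most $O_p(n^{-1/3})=o_p(n^{-1/4})$ to $Z_n$. On the common bulk, I factor
\[
(g_\kappa - x)^2 - (\Omega - x)^2 \,=\, h\,(g_\kappa + \Omega - 2x) \,=\, h^2 + 2h(\Omega - x),
\]
so this contribution equals $\int h^2\,dx + 2\int h(\Omega - x)\,dx$, and by Cauchy--Schwarz both pieces are dominated by $\|h\|_{L^2}^2$ and $\|h\|_{L^2}$ (with $\|\Omega-x\|_{L^2}$ a harmless constant).

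Next, since $\int_{\mathbb R}(g_\kappa - |x|)\,dx = 2 = \int_{\mathbb R}(\Omega - |x|)\,dx$, the function $h$ has mean zero on $\mathbb R$, and on $H_n$ it is compactly supported in $[-3,3]$ for large $n$. A Jensen-then-Cauchy--Schwarz argument using $h(s)=\tfrac{1}{2R}\int_{-R}^{R}(h(s)-h(t))\,dt$ together with $|s-t|\le 2R$ yields the Poincar\'e-type bound $\|h\|_{L^2}^2 \le 2R\,\|h\|_\theta^2$ with $R=3$. The scaling $L_\kappa(x)=g_\kappa(2x)/2$, $\bar\Omega(x)=\Omega(2x)/2$ gives $\|h\|_\theta^2 = 4\|L_\kappa - \bar\Omega\|_\theta^2$. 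I then apply Lemma \ref{season_mate} to the event $\mathcal F=\{\|L_\kappa-\bar\Omega\|_\theta^2\ge \varepsilon a_n/\sqrt n\}$ for fixed $\varepsilon>0$: on $\mathcal F\cap H_n$, Lemma \ref{rate_function} shows that the $\cosh^{-1}$-weighted term subtracted in $I(\kappa)$ is $O(n^{-2/3} t_n^2)$, which, for the choice $t_n=\log n$, is $o(a_n/\sqrt n)$ for any $a_n\to\infty$. Hence $I(\kappa)\ge \varepsilon a_n/(2\sqrt n)$ for $n$ large, and
\[
P(\mathcal F\cap H_n)\,\le\, \exp\bigl\{\sqrt n\,(C-\tfrac{\varepsilon}{2}a_n)\bigr\}\longrightarrow 0.
\]
Combined with $P(H_n^c)\to 0$, this gives $\|h\|_{L^2}^2 = o_p(a_n/\sqrt n)$.

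Putting the pieces together, the bulk contributes $o_p(a_n/\sqrt n) + o_p(\sqrt{a_n}\,n^{-1/4})$, and
\[
\tfrac{n^{1/4}}{a_n}\,Z_n \,=\, O_p\bigl(n^{-1/12}/a_n\bigr) + o_p(n^{-1/4}) + o_p(a_n^{-1/2}) \,\longrightarrow\, 0
\]
in probability. The main technical obstacle is balancing the two competing inputs to Lemma \ref{season_mate}: the threshold $a_n/\sqrt n$ must exceed the tail correction $O(n^{-2/3}t_n^2)$ from Lemma \ref{rate_function}, which is arranged automatically by taking $t_n=\log n$, so that the exponential rate $-\tfrac{\varepsilon}{2}a_n\sqrt n$ beats the entropy factor $C\sqrt n$ for all sufficiently large $n$ regardless of how slowly $a_n$ diverges.
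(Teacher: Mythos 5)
Your proposal is correct and follows essentially the same route as the paper: split off the $O_p(n^{-1/3})$ tail contributions using the Tracy--Widom bounds, reduce the bulk to controlling $\|g_\kappa-\Omega\|_{L^2[-2,2]}^2$ via Cauchy--Schwarz, and then bound that $L^2$ norm by $\|L_\kappa-\bar\Omega\|_\theta^2$ and finish with the large-deviation estimate of Lemma \ref{season_mate} combined with Lemma \ref{rate_function}. The only (harmless) deviation is in the Poincar\'e step: you use the global identity $\int_{\mathbb{R}}(g_\kappa-\Omega)\,dx=0$ on $[-3,3]$, whereas the paper restricts the double integral to $[-2,2]^2$ and builds the smallness of $\int_{-2}^{2}(g_\kappa-\Omega)\,dx$ into the definition of $H_n$, verifying it separately; both yield the same bound $\|g_\kappa-\Omega\|_{L^2}^2\lesssim\|L_\kappa-\bar\Omega\|_\theta^2$.
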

\begin{proof}[Proof of Lemma \ref{pen_name}] Without loss of generality, we assume
\bea\lbl{song_spring}
a_n=o(n^{1/4})
\eea
as $n\to\infty$. Set
\beaa
Z_n'=\int_{-2}^{2}(g_{\kappa}(x)-x)^2\,dx-\int_{-2}^{2}(\Omega(x)-x)^2\,dx.
\eeaa
Write
\bea\lbl{news_paper}
\frac{n^{1/4}}{a_n}Z_n= \frac{n^{1/4}}{a_n}Z_n'+ \frac{1}{n^{1/12} a_n}R_{n,1}+\frac{1}{n^{1/12} a_n}R_{n,2},
\eea
where
\beaa
& & R_{n,1}:=n^{1/3}\int_{-m/\sqrt{n}}^{-2}(g_{\kappa}(x)-x)^2 \,dx;\\
& & R_{n,2}:=n^{1/3}\int_{2}^{k_1/\sqrt{n}}(g_{\kappa}(x)-x)^2 \,dx.
\eeaa
We will show the  three terms on the right hand side of (\ref{news_paper}) go to zero in probability.

{\it Step 1}. We will prove a stronger result that both $R_{n,1}$ and $R_{n,2}$ are of order of $O_p(1)$ as $n\to\infty.$ We start with $R_{n,1}$. The proof essentially bounds the integrand of $R_{n,1}$ for $-m/\sqrt{n}\le x \le -2$, which can be achieved via \eqref{green:white} and the following result. By Theorem 5.5 from Ivanov  and Olshanski (2001),
\bea\lbl{Minnesota_lib_tc}
\delta_n:= \sup_{x\in \mathbb{R}}|g_{\kappa}(x)-\Omega(x)|\to 0
\eea
in probability as $n\to\infty$, where $\Omega(x)$ is defined in (\ref{jilin}). Observe that
\beaa
\frac{1}{2}|g_{\kappa}(x)-x|^2 \leq \delta_n^2+(\Omega(x)-x)^2
\eeaa
for each $x\in \mathbb{R}.$  Denote $C=\sup_{-3\leq x\leq 0}(\Omega(x)-x)^2$ and $$C_n=\sup_{-m/\sqrt{n}\le x \le -2}(\Omega(x)-x)^2.$$ Then $P(C_n>2C)\leq P(\frac{m}{\sqrt{n}}>3)\to 0$ by (\ref{green:white}). Therefore, $C_n=O_p(1)$. It follows that
\bea
|R_{n,1}|  \leq  2n^{1/3} \big|\frac{m}{\sqrt{n}}-2\big|\cdot(\delta_n^2 +C_n)=O_p(1)\lbl{catalog}
\eea
 by (\ref{green:white}) again.
Similarly, $R_{n,2}=O_p(1)$ as $n\to\infty$.

In the rest of the proof, we only need to show $\frac{n^{1/4}}{a_n} Z_n'$ goes to zero in probability. This again takes several steps.

{\it Step 2}. In this step we will reduce $Z_n'$ to a workable form.  By the same argument as the one used in proving (\ref{catalog}),
we have
\beaa
Z_n' &= & \int_{-2}^2(g_{\kappa}(x)-\Omega(x)) (g_{\kappa}(x)-\Omega(x)+2(\Omega(x)-x))\,dx\\
& = & \int_{-2}^2|g_{\kappa}(x)-\Omega(x)|^2\,dx + \int_{-2}^2f_1(x)(g_{\kappa}(x)-\Omega(x)) \,dx\\
& \le & \int_{-2}^2|g_{\kappa}(x)-\Omega(x)|^2\,dx + \sqrt{\int_{-2}^2 f_1(x) \,dx} \cdot \sqrt{ \int_{-2}^2 |g_{\kappa}(x)-\Omega(x)|^2  \,dx}
\eeaa
where $f_1(x):=2(\Omega(x)-x)$ for all $x \in \mathbb{R}$, and the last inequality above follows from the Cauchy-Schwartz inequality. To show $\frac{n^{1/4}}{a_n} Z_n'$ goes to zero in probability, since $f_1(x)$ is a bounded function on $\mathbb{R}$, it suffices to prove
\bea\lbl{mellon}
Z_n'':&=& \frac{n^{1/2}}{a_n^2} \int_{-2}^2|g_{\kappa}(x)-\Omega(x)|^2\,dx \to 0
\eea
in probability by (\ref{song_spring}). Set
\bea\lbl{hapiness}
H_n &= & \Big\{\kappa=(k_1, \cdots, k_m)\vdash n;\, 2\sqrt{n}-n^{1/6}\log n \leq m,\, k_1 \leq 2\sqrt{n}+n^{1/6}\log n\ \mbox{and}\ \nonumber\\
& &   \ \ \ \ \ \ \ \ \ \ \ \ \ \ \ \ \ \ \ \ \ \ \ \ \ \ \ \ \ \ \ \ \ \ \ \ \ \ \ \ \ \ \ \big|n^{1/3}\int_{-2}^2(g_{\kappa}(x)-\Omega(x))\,ds\big|\leq 1 \Big\}.
\eea

{\it Step 3 }. We prove in this step that
\bea\lbl{garlic_sweet}
\lim_{n\to\infty}P(H_n^c)=0.
\eea
Note that $g_{\kappa}(s)=\Omega(s)=|s|$ if $s\geq \max\{\frac{k_1}{\sqrt{n}}, 2\}$ or $s\leq -\max\{\frac{m}{\sqrt{n}}, 2\}$. Also, the areas encircled by $t=|s|$ and $t=g_{\kappa}(s)$ and that by $t=|s|$ and $t=\Omega(s)$ are both equal to $2$; see Figure \ref{fig:kerov}. It is trivial to see that $\int_{a}^b(g_{\kappa}(s)-\Omega(s))\,du=\int_{\mathbb{R}}(g_{\kappa}(s)-\Omega(s))\,du=0$ for $a:= -\max\{\frac{m}{\sqrt{n}}, 2\}$ and $b:= \max\{\frac{k_1}{\sqrt{n}}, 2\}$. Define  $$h_{\kappa}(s)=g_{\kappa}(s)-\Omega(s).$$
We see
\beaa
-\int_{-2}^2h_{\kappa}(s)\,ds=\int_a^{-2}h_{\kappa}(s)\,ds + \int_{2}^{b}h_{\kappa}(s)\,ds.
\eeaa
Thus,
\beaa
 \big|n^{1/3}\int_{-2}^2h_{\kappa}(s)\,ds\big|
 &\leq & \big|n^{1/3}\int_a^{-2}h_{\kappa}(s)\,ds\big| + \big|n^{1/3}\int_{2}^bh_{\kappa}(s)\,ds\big| \nonumber\\
& \leq & 2n^{1/3}\max_{s\in \mathbb{R}}|h_{\kappa}(s)| \cdot (|a+2|+|b-2|).
\eeaa
From (\ref{Minnesota_lib_tc}), $\max_{s\in \mathbb{R}}|h_{\kappa}(s)|\to 0$ in probability. Further  $|a+2| \leq |\frac{m}{\sqrt{n}}-2|$ and $|b-2| \leq |\frac{k_1}{\sqrt{n}}-2|$.  By (\ref{green:white}) again, we obtain $n^{1/3}\int_{-2}^2h_{\kappa}(s)\,du \to 0$ in probability. This and the first conclusion of Lemma  \ref{rate_function} imply that $\lim_{n\to\infty}P(H_n^c)=0$.

{\it Step 4}. Review $H_n$ in (\ref{hapiness}) and the limit in (\ref{garlic_sweet}).
From the bound $P(Z_n''>\epsilon) \le P(H_n \cap \{Z_n''>\epsilon\}) + P(H_n^c)$,  we apply Lemma \ref{season_mate} for the set $\mathcal F = H_n \cap \{Z_n''> \epsilon\}$ for the first term on the RHS of the bound. 
It is seen from  Lemma \ref{season_mate} that there exists an absolute constant $C>0$ such that
\beaa
P(Z_n''>\epsilon) & \leq & e^{C\sqrt{n}-n \cdot \inf I(\kappa)} + P(H_n^c) \\
& = & e^{C\sqrt{n}-n \cdot \inf I(\kappa)} + o(1).
\eeaa
where $I(\kappa)$ is as in Lemma \ref{season_mate} and the infimum is taken over all $\kappa\in H_n \cap \{Z_n''> \epsilon\}$. We claim
\bea\lbl{tea_jilin_s}
n^{1/2}\cdot \inf_{\kappa\in H_n;\, Z_n''\geq \epsilon} I(\kappa)\to\infty
\eea
as $n\to\infty$. If this is true, we then obtain (\ref{mellon}), and the proof is completed. Review
\beaa
I(\kappa)=\|L_{\kappa}-\bar{\Omega}\|_{\theta}^2 - 4\int_{|s|>1}(L_{\kappa}(s)-\bar{\Omega}(s))\cosh^{-1} |s|\,ds.
\eeaa
Lemma \ref{rate_function} says that the last term above is of order $O(n^{-2/3}(\log n)^2)$ as $\kappa\in H_n$ by taking $t_n=\log n$.
To get (\ref{tea_jilin_s}), it suffices to show
\bea\lbl{waterfall}
n^{1/2}\cdot \inf_{\kappa\in H_n;\, Z_n''\geq \epsilon}\|L_{\kappa}-\bar{\Omega}\|_{\theta}^2 \to\infty
\eea
as $n\to\infty$.  By the definitions of $L_{\kappa}$ and $\bar{\Omega}$, we see from (\ref{Sobolev}) that
\beaa
\|L_{\kappa}-\bar{\Omega}\|_{\theta}^2
& \geq & \frac{1}{4}\int_{-2}^2\int_{-2}^2\Big(\frac{h_{\kappa}(s)-h_{\kappa}(t)}{s-t}\Big)^2\,dsdt\\
& \geq & \frac{1}{4^3}\int_{-2}^2\int_{-2}^2(h_{\kappa}(s)-h_{\kappa}(t))^2\,dsdt\\
& = &  \frac{1}{4} E(h_{\kappa}(U)-h_{\kappa}(V))^2
\eeaa
where  $U$ and $V$ are independent random variables with the uniform distribution on $[-2, 2].$ By the Jensen inequality, the last integral is bounded below by $E(h_{\kappa}(U)-Eh_{\kappa}(V))^2=E[h_{\kappa}(U)^2]-[Eh_{\kappa}(V)]^2$. Consequently,
\beaa
\|L_{\kappa}-\bar{\Omega}\|_{\theta}^2
& \geq & \frac{1}{16}\int_{-2}^2h_{\kappa}(u)^2\,du - \frac{1}{64}\Big(\int_{-2}^2h_{\kappa}(u)\,du\Big)^2 \\ 
& \geq & \frac{\epsilon}{16}n^{-1/2}\cdot a_n^2 -\frac{1}{64}n^{-2/3}
\eeaa
for $\kappa \in H_n \cap \{Z_n''\geq \epsilon\}$. This implies (\ref{waterfall}).
\end{proof}

With the above preparation we proceed to prove Theorem \ref{thm:LLLplan}.

\begin{proof}[Proof of Theorem \ref{thm:LLLplan}] By Lemma \ref{theater},
\beaa
\lambda_{\kappa}=\big(m-\frac{\alpha}{2}\big)n +\sum_{i=1}^m (\frac{\alpha}{2}k_i-i)k_i.
\eeaa
Thus
\beaa
&&\frac{\lambda_{\kappa} -2n^{3/2}- (\alpha-1) (\frac{128}{27} \pi^{-2} )n^{3/2} }{ n^{5/4} \cdot a_n }\\
&=&\frac{m-2\sqrt{n}}{n^{1/4} \cdot a_n} - \frac{\alpha}{2n^{1/4} \cdot a_n} +\frac{\sum_{i=1}^m(\frac{\alpha}{2}k_i-i)k_i-(\alpha-1) ( \frac{128}{27} \pi^{-2} )n^{3/2} }{n^{5/4} \cdot a_n}.
\eeaa
We claim
\bea\lbl{cold_leave}
\frac{\sum_{i=1}^m(\frac{\alpha}{2}k_i-i)k_i-(\alpha-1) (\frac{128}{27} \pi^{-2} )n^{3/2} }{n^{5/4} \cdot a_n} \to 0
\eea
in probability as $n\to\infty.$ If this is true, by (\ref{green:white}), we  finish the proof. Now let us show (\ref{cold_leave}).

We first claim
\bea\lbl{we}
\frac{1}{n}\sum_{i=1}^m\big(\frac{1}{2}k_i-i\big)k_i \to  N\big(-\frac{1}{2}, \sigma^2\big)
\eea
for some $\sigma^2\in (0, \infty)$. To see why this is true, we get from (\ref{kernel_sea}) and Lemma \ref{theater} that \beaa
a(\kappa')-a(\kappa)=\frac{1}{2}n+\sum_{i=1}^m\big(\frac{1}{2}k_i-i\big)k_i.
\eeaa
By Theorem 1.2 from Fulman (2004), there is $\sigma^2\in (0, \infty)$ such that
\beaa
\frac{a(\kappa')-a(\kappa)}{n} \to N(0, \sigma^2)
\eeaa
weakly as $n\to\infty$. Then (\ref{we}) follows.

Second, from (\ref{green:white}), we know $\xi_n:=(m-2\sqrt{n})n^{-1/6}$ converges weakly to  $F_2$ as $n\to\infty.$ Write
\beaa
m^3 = (2\sqrt{n} + n^{1/6}\xi_n)^3=n^{1/2}\xi_n^3 + 6n^{5/6}\xi_n^2 + 12 n^{7/6}\xi_n + 8n^{3/2}.
\eeaa
This implies that $$\frac{m^3-8n^{3/2}}{n^{5/4}} \to 0$$ in probability as $n \to \infty$. Let $Z_n$ be as in Lemma \ref{pen_name} and $\Omega(x)$ as in (\ref{jilin}). It is seen from Lemmas \ref{wisdom} and \ref{pen_name} that
\beaa
\sum_{i=1}^m i k_i &=& \frac{1}{8}n^{3/2}\int_{-m/\sqrt{n}}^{k_1/\sqrt{n}}(g_{\kappa}(x)-x)^2\,dx -\frac{1}{6}m^3 + \frac{1}{2}n\\
&=&  \frac{1}{8}n^{3/2} \left( Z_n + \int_{-2}^2 (\Omega(x)-x)^2 \,dx \right)-\frac{1}{6}m^3 + \frac{1}{2}n
\eeaa
with $\frac{n^{1/4}}{8a_n} Z_n \to 0$ in probability as $n\to\infty.$ The last two assertions imply
\bea
& & \frac{1}{{n^{5/4} \cdot a_n}}\Big[\sum_{i=1}^mik_i - \frac{1}{8} n^{\frac{3}{2}}  \int_{-2}^2 (\Omega(x)-x)^2 \,dx + \frac{4}{3} n^{\frac{3}{2}}\Big]  \lbl{field_sleep}\\
& = & \frac{n^{1/4}}{8a_n} Z_n  - \frac{1}{6a_n}\cdot \frac{m^3-8n^{3/2}}{n^{5/4}} + \frac{1}{2a_n n^{1/4}}
\to 0\nonumber
\eea
in probability as $n\to\infty$. It is trivial and yet a bit tedious to verify
\bea\lbl{integral}
\int_{-2}^2 (\Omega(x)-x)^2 \,dx = \frac{32}{3} + \frac{1024}{27\pi^2}.
\eea
The calculation of \eqref{integral} is included in Appendix \ref{appendix:integral}. Plug this into (\ref{field_sleep}) to see
\bea\lbl{you}
\frac{\sum_{i=1}^mik_i - \frac{128}{27\pi^2} n^{3/2}}{n^{5/4} \cdot a_n}
\to 0
\eea
in probability as $n\to\infty$.

Third, observe
\beaa
\sum_{i=1}^m(\frac{\alpha}{2}k_i-i)k_i
= \alpha\sum_{i=1}^m\big(\frac{1}{2}k_i-i\big)k_i+(\alpha-1)\sum_{i=1}^mik_i.
\eeaa
Therefore
\beaa
& &\frac{\sum_{i=1}^m(\frac{\alpha}{2}k_i-i)k_i-(\alpha-1) (\frac{128}{27} \pi^{-2} )n^{3/2} }{n^{5/4} \cdot a_n}\\
&=& \alpha \frac{\sum_{i=1}^m\big(\frac{1}{2}k_i-i\big)k_i}{n^{5/4} \cdot a_n} + (\alpha-1) \frac{\sum_{i=1}^mik_i - ( \frac{128}{27} \pi^{-2} )n^{3/2}}{n^{5/4} \cdot a_n} \to 0
\eeaa
in probability by \eqref{we} and \eqref{you}. We finally arrive at (\ref{cold_leave}).
\end{proof}

\section{Appendix}\lbl{appendix:last}
In this section we will prove \eqref{mean_variance}, verify \eqref{integral} and derive the density functions of the random variable appearing in Theorem \ref{cancel_temple} for two cases. They are placed in three subsections.

\subsection{Proof of \eqref{mean_variance}}\label{appendix:mean_variance}
Recall $(2s-1)!!=1\cdot 3\cdots (2s-1)$ for integer $s\geq 1$. Set $(-1)!!=1$. The following is Lemma 2.4 from Jiang (2009).
\begin{lemma}\lbl{Jiang2009} Suppose $p\geq 2$ and $Z_1, \cdots, Z_p$ are i.i.d. random variables with $Z_1 \sim N(0,1).$ Define $U_i=\frac{Z_i^2}{Z_1^2 + \cdots + Z_p^2}$ for $1\leq i \leq p$. Let $a_1, \cdots, a_p$ be non-negative integers and $a=\sum_{i=1}^pa_i$. Then
\beaa
E\big(U_1^{a_1}\cdots U_p^{a_p}\big) = \frac{\prod_{i=1}^p(2a_i-1)!!}{\prod_{i=1}^a(p+2i-2)}.
\eeaa
\end{lemma}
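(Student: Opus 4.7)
The plan is to exploit the scale invariance of the standard Gaussian distribution: the normalized vector $(U_1,\ldots,U_p)$ is independent of the total $S := Z_1^2+\cdots+Z_p^2$. This is a classical fact---one can see it directly from the fact that $(Z_1,\ldots,Z_p)/\sqrt{S}$ is uniformly distributed on the unit sphere of $\mathbb{R}^p$ and is independent of the radial part $\sqrt{S}$; alternatively, $S$ is complete sufficient for $\sigma^2$ in the model $\{N(0,\sigma^2)^{\otimes p}\}$ while $(U_1,\ldots,U_p)$ is scale-invariant (hence ancillary), so Basu's theorem applies.

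Granted this independence, I would write $Z_i^{2a_i} = U_i^{a_i}\,S^{a_i}$ and multiply over $i$ to obtain
\beaa
\prod_{i=1}^p (Z_i^2)^{a_i} = \Big(\prod_{i=1}^p U_i^{a_i}\Big)\cdot S^{a},
\eeaa
with $a = \sum_{i=1}^p a_i$. Taking expectations on both sides and using the independence of $(U_1,\ldots,U_p)$ from $S$,
\beaa
E\Big(\prod_{i=1}^p U_i^{a_i}\Big) = \frac{\prod_{i=1}^p E(Z_i^{2a_i})}{E(S^{a})}.
\eeaa

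The two expectations on the right are now standard. The numerator is a product of univariate Gaussian moments $E(Z_i^{2a_i}) = (2a_i-1)!!$, and by the independence of the $Z_i$'s it equals $\prod_{i=1}^p(2a_i-1)!!$. For the denominator, $S\sim \chi^2_p$, so a direct computation with the chi-square density gives
\beaa
E(S^{a}) = 2^{a}\,\frac{\Gamma(p/2+a)}{\Gamma(p/2)} = p(p+2)(p+4)\cdots(p+2a-2) = \prod_{i=1}^{a}(p+2i-2).
\eeaa
Dividing yields exactly the identity in the lemma.

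The only conceptual step is the independence of $(U_1,\ldots,U_p)$ from $S$; once that is invoked, the rest is a one-line algebraic identity and two elementary moment computations. So the ``obstacle,'' such as it is, reduces to citing or reproving this classical invariance-based independence, after which the proof is entirely routine.
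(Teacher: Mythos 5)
Your proof is correct. Note, though, that the paper does not actually prove this lemma---it cites it as Lemma~2.4 from Jiang (2009)---so there is no in-paper argument to compare against; your route through the polar/radial independence (or Basu's theorem), followed by the two standard moment computations $E(Z^{2k})=(2k-1)!!$ and $E(S^a)=\prod_{i=1}^a(p+2i-2)$ for $S\sim\chi^2_p$, is the standard and expected way to establish this identity.
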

\noindent\textbf{Proof of (\ref{mean_variance})}. Recall (\ref{pro_land}). Write $(r-1)s^2=\sum_{i=1}^r x_i^2-r\bar{x}^2$. In our case,
\beaa
& &  \bar{x} = \frac{1}{|\mathcal{P}_n(m)|}\sum_{\kappa\in \mathcal{P}_n(m)}\lambda_{\kappa}=E\lambda_{\kappa};\\
& & s^2=\frac{1}{|\mathcal{P}_n(m)|-1}\sum_{\kappa\in \mathcal{P}_n(m)}(\lambda_{\kappa}-\bar{x})^2 \sim E(\lambda_{\kappa}^2)- (E\lambda_{\kappa})^2
\eeaa
as $n\to\infty$, where $E$ is the expectation about the uniform measure on $\ml{P}_n(m)'$. Therefore,
\bea\lbl{interview_Jin}
\frac{\bar{x}}{n^2}=\frac{E\lambda_{\kappa}}{n^2}\ \ \mbox{and}\ \ \frac{s^2}{n^4}\sim E\Big(\frac{\lambda_{\kappa}}{n^2}\Big)^2 - \Big(\frac{E\lambda_{\kappa}}{n^2}\Big)^2.
\eea
From Lemma \ref{theater}, we see a trivial bound that $0\leq \lambda_{\kappa}/n^2\leq 1+\frac{\alpha}{2}m$ for each partition $\kappa =(k_1, \cdots, k_m)\vdash n$ with $k_m\geq 1.$ By Theorem \ref{cancel_temple}, under $\ml{P}_n'(m)$,
\beaa
\frac{\lambda_{\kappa}}{n^2}\to \frac{\alpha}{2}\cdot Y\ \ \mbox{and}\ \ Y:=\frac{\xi_1^2+\cdots + \xi_m^2}{(\xi_1+\cdots + \xi_m)^2}
\eeaa
as $n\to\infty$, where  $\{\xi_i;\, 1\leq i \leq m\}$ are i.i.d. random variables with density  $e^{-x}I(x\geq 0)$. By bounded convergence theorem and (\ref{interview_Jin}),
\bea\lbl{do_tired}
\frac{\bar{x}}{n^2}\to \frac{\alpha}{2} EY\ \ \mbox{and}\ \ \frac{s^2}{n^4} \to \frac{\alpha^2}{4} [E(Y^2)-(EY)^2]
\eea
as $n\to\infty$. Now we evaluate $EY$ and $E(Y^2)$. Easily,
\bea
& & EY=m\cdot E\frac{\xi_1^2}{(\xi_1+\cdots + \xi_m)^2};\nonumber\\
& &  E(Y^2)=m\cdot E\frac{\xi_1^4}{(\xi_1+\cdots + \xi_m)^4} +m(m-1)\cdot E\frac{\xi_1^2\xi_2^2}{(\xi_1+\cdots + \xi_m)^4}.\lbl{moon_wind}
\eea
Let $Z_1, \cdots, Z_{2m}$ be i.i.d. random variables with $N(0, 1)$ and $U_i=\frac{Z_i^2}{Z_1^2 + \cdots + Z_{2m}^2}$ for $1\leq i \leq 2m$. Evidently, $(Z_1^2+Z_2^2)/2$ has density function $e^{-x}I(x\geq 0)$. Then,
\beaa
\Big(\frac{\xi_i}{\xi_1+\cdots + \xi_m}\Big)_{1\leq i \leq m}\ \  \mbox{and}\ \ \ (U_{2i-1}+U_{2i})_{1\leq i \leq m}
\eeaa
have the same distribution. Consequently, by taking $p=2m$ in Lemma \ref{Jiang2009},
\bea
EY &=& m\cdot E(U_1+U_2)^2 \nonumber\\
&= & 2m[E(U_1^2) +E(U_1U_2)] \nonumber\\
& = & 2m\big[\frac{3}{4m(m+1)} + \frac{1}{4m(m+1)}\big]=\frac{2}{m+1}.\lbl{easy_absolute}
\eea
Similarly,
\beaa
E\frac{\xi_1^4}{(\xi_1+\cdots + \xi_m)^4}
&=& E[(U_1+U_2)^4]\\
& = & 2E(U_1^4) +8 E(U_1^3U_2) + 6 E(U_1^2U_2^2)\\
& = & \frac{105}{8}\frac{1}{m(m+1)(m+2)(m+3)} + \frac{15}{2}\frac{1}{m(m+1)(m+2)(m+3)}\\
& & ~~~~~~~~~~~~~~~~~~~~~~~~~~~~~~~~~~~~~~~ + \frac{27}{8}\frac{1}{m(m+1)(m+2)(m+3)}\\
&= & \frac{24}{m(m+1)(m+2)(m+3)}
\eeaa
and
\beaa
 E\frac{\xi_1^2\xi_2^2}{(\xi_1+\cdots + \xi_m)^4} &= & E[(U_1+U_2)^2(U_3+U_4)^2]\\
 & = & 4E(U_1^2U_2^2) + 8E(U_1^2U_2U_3)+ 4E(U_1U_2U_3U_4)\\
 & = & \frac{9}{4}\frac{1}{m(m+1)(m+2)(m+3)} + \frac{3}{2}\frac{1}{m(m+1)(m+2)(m+3)}\\
 & &  ~~~~~~~~~~~~~~~~~~~~~~~~~~~~~~~~~~~~ + \frac{1}{4}\frac{1}{m(m+1)(m+2)(m+3)}\\
 &= & \frac{4}{m(m+1)(m+2)(m+3)}.
\eeaa
It follows from (\ref{moon_wind}) and (\ref{easy_absolute}) that
\beaa
& & E(Y^2)=\frac{4m+20}{(m+1)(m+2)(m+3)};\\
& & E(Y^2)-(EY)^2 = \frac{4m+20}{(m+1)(m+2)(m+3)}-\Big(\frac{2}{m+1}\Big)^2\\
& & ~~~~~~~~~~~~~~~~~~~~~= \frac{4m-4}{(m+1)^2(m+2)(m+3)}.
\eeaa
This and (\ref{do_tired}) say  that
\beaa
\frac{\bar{x}}{n^2}\to\frac{\alpha}{m+1}\ \ \mbox{and} \ \ \frac{s^2}{n^4}\to \frac{(m-1)\alpha^2}{(m+1)^2(m+2)(m+3)}.
\eeaa

\subsection{Verification of \eqref{integral}}\label{appendix:integral}
\begin{proof}[Verification of \eqref{integral}] Trivially,  $\Omega(x)$ in (\ref{jilin}) is an even function and $\Omega(x)' = \frac{2}{\pi} \arcsin\frac{x}{2}$ for $|x| < 2$. Then
\begin{equation*}
\begin{split}
 & \int_{-2}^2 (\Omega(x) - x)^2 \,dx  = \int_{-2}^2 \Omega(x)^2 \,dx + \int_{-2}^2 x^2 \,dx\\
& = x \cdot \Omega(x)^2 \Bigr|_{-2}^2 - \int_{-2}^2 x \cdot 2\Omega(x) \cdot \Omega(x)'  \,dx + \frac{x^3}{3} \Bigr|_{-2}^2\\
& = \frac{64}{3} - \frac{16}{\pi^2} \int_{0}^2 x \arcsin {\frac{x}{2}}  \cdot  ( x \arcsin\frac{x}{2} +\sqrt{4-x^2} )\,dx.
\end{split}
\end{equation*}
Now, set $x=2\sin \theta$ for $0\leq \theta \leq \frac
{\pi}{2}$, the above integral becomes
\bea\lbl{sine_cosine}
&&\int_{0}^{\frac{\pi}{2}} 2\theta \sin \theta (2\theta \sin \theta + 2\cos\theta) 2\cos \theta \, d\theta \nonumber\\
& = & 2\int_{0}^{\frac{\pi}{2}}(\theta \sin \theta+\theta \sin (3\theta)+\theta^2\cos\theta-\theta^2\cos (3\theta))\,d\theta
\eea
by trigonometric identities. It is easy to verify that
\beaa
& & \theta\sin \theta=(\sin\theta-\theta\cos\theta)';\ \  \ \  \theta\sin (3\theta)=\frac{1}{9}(\sin(3\theta)-3\theta\cos(3\theta))';\\
& &  \theta^2\cos\theta =(\theta^2\sin \theta +2\theta\cos \theta-2\sin \theta)';\\
  & & \theta^2\cos (3\theta)=\frac{1}{27}(9\theta^2\sin (3\theta) +6\theta\cos (3\theta)-2\sin (3\theta))'.
\eeaa
Thus, the term in (\ref{sine_cosine}) is equal to
\beaa
2\Big(1+(-\frac{1}{9})+ (\frac{\pi^2}{4}-2)-\frac{1}{27}(-\frac{9\pi^2}{4}+2)\Big)=\frac{2}{3}\pi^2-\frac{64}{27}.
\eeaa
It follows that
\beaa
\int_{-2}^2 (\Omega(x) - x)^2 \,dx  =\frac{64}{3} - \frac{16}{\pi^2}\Big(\frac{2}{3}\pi^2-\frac{64}{27}\Big)=\frac{32}{3}+\frac{1024}{27\pi^2}.
\eeaa
This completes the verification.
\end{proof}

\subsection{Derivation of density functions in Theorem \ref{cancel_temple}}\label{appendix:integral2}
In this section, we will derive explicit formulas for the limiting distribution in Theorem \ref{cancel_temple}. For convenience, we rewrite the conclusion as  $$\frac{2}{\alpha}\cdot  \frac{\lambda_\kappa}{n^2}\to \nu, $$ where  $\nu$ is different from $\mu$ in Theorem \ref{cancel_temple} by a factor of $\frac{2}{\alpha}$. We will only evaluate the cases $m=2, 3$.
We first state the conclusions and prove them afterwards.

\noindent{\it Case 1.} For $m=2$, the support of $\nu$ is $[\frac{1}{2}, 1]$ and the cdf of  $\nu$ is
\begin{equation}\label{eq:cdf-ru}
 F(t) =\sqrt{2t-1}
\end{equation}
for $t\in [\frac{1}{2}, 1]$. Hence the density function is given by
\beaa
 f(t)= \frac{1}{\sqrt{2t-1}},\ \ t\in [\frac{1}{2}, 1].
\eeaa

\noindent{\it Case 2.} For $m=3$, the support of $\nu$ is $[\frac{1}{3}, 1]$, and the cdf of  $\nu$ is
\bea\label{eq:cdf-haha}
 F(t) =
  \begin{cases}
   \frac{2}{\sqrt{3}} \pi (t-\frac{1}{3}), & \text{if } \frac{1}{3} \le t < \frac{1}{2}; \\
   \frac{2}{\sqrt{3}} \left( (t-\frac{1}{3}) (\pi - 3\arccos\frac{1}{\sqrt{6t-2}}) + \frac{\sqrt{6}}{2} \sqrt{t-\frac{1}{2}}\, \right),      & \text{if } \frac{1}{2} \le t < 1.
  \end{cases}
\eea
By differentiation, we get the density function
\beaa
 f(t)=
 \begin{cases}
   \frac{2}{\sqrt{3}} \pi, & \text{if } \frac{1}{3} \le t < \frac{1}{2}; \\
   \frac{2}{\sqrt{3}} \big( \pi - 3\arccos\frac{1}{\sqrt{6t-2}} \big),      & \text{if } \frac{1}{2} \le t \le 1.\\
  \end{cases}
\eeaa
The above are the two density functions claimed below the statement of Theorem \ref{cancel_temple}. Now we  prove them.

From a comment below Theorem \ref{cancel_temple}, the limiting law of $\frac{2}{\alpha}\cdot \frac{\lambda_\kappa}{n^2}$  is  the same as the distribution of $\sum_{i=1}^m Y_i^2$, where $(Y_1,\ldots,Y_m)$ has  uniform distribution over the set $$\mathcal{H}:=\Big\{(y_1,\ldots,y_{m}) \in [0,1]^{m};\, \sum_{i=1}^{m}y_i = 1 \Big\}.$$ By (\ref{silk_peer}) the volume of $\mathcal{H}$ is $\frac{\sqrt{m}}{(m-1)!}$. Therefore, the cdf of $\sum_{i=1}^m Y_i^2$ is
\begin{equation}\label{eq:dist}
F(t)=P\Big(\sum_{i=1}^m Y_i^2 \le t\Big) = \frac{(m-1)!}{\sqrt{m}} \cdot \text{volume  of }  \Big\{ \sum_{i=1}^m y_i^2 \le t\Big\}\cap \mathcal{H},\ \ t\geq 0.
\end{equation}

Denote $B_m(t):=\{ \sum_{i=1}^m y_i^2 \le t\} \subset \mathbb{R}^m$. Let $V(t)$ be the volume of $B_m(t) \cap \mathcal{H} $. We start with some facts for any  $m\geq 2$.

First, $V(t) = 0$ for $t < \frac{1}{m}$. In fact, if $(y_1, \cdots, y_m)\in B_m(t)\cap \mathcal{H}$, then
$$\frac{1}{m}=\frac{(\sum_{i=1}^m y_i)^2}{m} \le \sum_{i=1}^m y_i^2\le t.$$

Further, for $t>1$, $\mathcal{H}$ is inscribed in $B_m(t)$ and thus $V(t)  = \frac{\sqrt{m}}{(m-1)!}$. Now assume $1/m \le t \le 1$.

\medskip

\noindent{\it The proof of (\ref{eq:cdf-ru})}. Assume $m=2$. If $1/2 \le t \le 1$, then $\{(y_1,y_2) \in [0,1]^2: y_1 + y_2 =1 \}\cap\{ y_1^2 + y_2^2 \le t \}$ is a line segment. Easily,  the  endpoints of the line segment are $$\Big(\frac{1+\sqrt{2t-1}}{2}, \frac{1-\sqrt{2t-1}}{2}\Big) \quad \text{and} \quad \Big(\frac{1-\sqrt{2t-1}}{2}, \frac{1+\sqrt{2t-1}}{2}\Big),$$ respectively.
Thus $V(t) = \sqrt{2(2t-1)}.$
Therefore the conclusion follows directly from \eqref{eq:dist}.

\medskip

\noindent{\it The proof of (\ref{eq:cdf-haha})}. We first observe that as $t$ increases from $\frac{1}{3}$ to 1, the intersection $B_{3}(t)\cap \mathcal{H}$ expands and passes through $\mathcal{H}$ as $t$ exceeds some critical value $t_0$; see Figure \ref{fig:region}.

We claim that $t_0 = \frac{1}{2}$. Indeed, the center $C$ of the intersection of $B_{3}(t)$ and the hyperplane $\mathcal{I}:=\{(y_1, y_2, y_3)\in \mathbb{R}^3; y_1+y_2+y_3=1\} \supset \mathcal{H}$ is $C=(\frac{1}{3}, \frac{1}{3}, \frac{1}{3}).$ Thus,
the distance from the origin to  $\mathcal{I}$ is $d=((\frac{1}{3})^2+(\frac{1}{3})^2+(\frac{1}{3})^2)^{1/2} = \frac{1}{\sqrt{3}}.$
By Pythagorean's theorem, the radius of the intersection (disc) on $\mathcal{I}$ is
$$R(t) = \sqrt{t - d^2} = \sqrt{t - \frac{1}{3}}.$$
Let $t_0$ be the value such that the intersection $B_3(t)\cap \mathcal{H}$ exactly inscribes $\mathcal{H}$. By symmetry, the intersection point at the $(x,y)$-plane is
$M = (\frac{1}{2},\frac{1}{2}, 0)$; see Figure \ref{fig:region}(b). Therefore $|CM| = \sqrt{\frac{1}{6}}.$
Solving $t_0$ from  $|CM| = R(t_0),$ we have $t_0 = \frac{1}{2}$.

\begin{figure}[h!]
  \centering
\includegraphics[width=11cm]{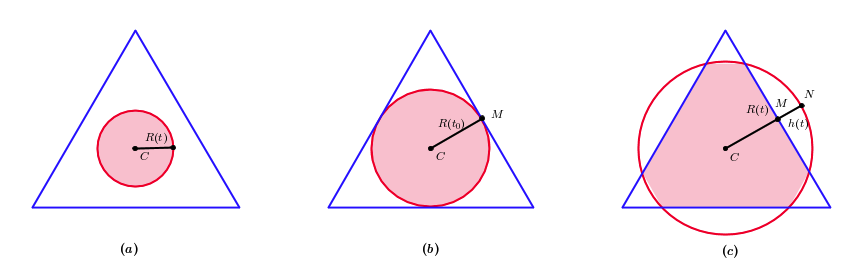}
\caption{Shaded region indicates volume $V(t)$ of intersection as $t$ changes from $1/3$ to 1 as $m=3$.}
\label{fig:region}
\end{figure}

\medskip

When $\frac{1}{3} \le t < \frac{1}{2}$, the intersection locates entirely in $\mathcal{H}$; see Figure \ref{fig:region}(a). Then
$$V(t) = \pi R(t)^2 = \pi (t - \frac{1}{3}).$$ When $\frac{1}{2} \le t \le 1$, the volume of the intersection part [see Figure \ref{fig:region}(c)] is given by
$$V(t) = \pi R(t)^2 - 3 \cdot V_{\text{cs}}(h(t),R(t)),$$
where $V_{\text{cs}}(h(t),R(t))$ is the area of circular segment with radius $R(t)$ and height
$$h(t)=R(t)- |CM|= \sqrt{t-\frac{1}{3}}-\sqrt{\frac{1}{6}}.$$ Therefore, it is easy to check
$$V(t)= \pi (t - \frac{1}{3}) - 3(t - \frac{1}{3}) \arccos \frac{1}{\sqrt{6t-2}} + 3 \sqrt{\frac{1}{6}\big(t-\frac{1}{2}\big)}\,.$$
This and \eqref{eq:dist} yield the desired conclusion.


\medskip

\noindent\textbf{Acknowledgements}. We thank Professors Valentin F\'{e}ray, Sho Matsumoto and Andrei Okounkov very much for communications and discussions. We thank the anonymous referee for the careful reading of our manuscript and many insightful comments and suggestions. 





\end{document}